\NewDocumentCommand{\xnewtheorem}{m o m}
 {%
  \IfNoValueTF{#2}
   {\newtheorem{#1}{#3}}
   {%
    \newaliascnt{#1}{#2}%
    \newtheorem{#1}[#1]{#3}%
    \aliascntresetthe{#1}%
    \expandafter\newcommand\csname #1autorefname\endcsname{#3}%
   }%
 }
\tikzstyle{block} = [draw,fill=blue!20,minimum size=0.5em]
\tikzstyle{branch}=[fill,shape=circle,minimum size=3pt,inner sep=0pt]
\newcommand{\ignore}[1]{}
\newcommand{\set}[1]{\left\{#1\right\}}
\newcommand{\conv}[1]{\mbox{\rm{conv}}#1}
\newcommand{\supp}[1]{\mbox{\rm{supp}}#1}
\def\ba{\begin{array}}
\def\ea{\end{array}}
\def\beq{\begin{equation}}
\def\eeq{\end{equation}}
\def\bea{\begin{eqnarray}}
\def\eea{\end{eqnarray}}
\def\beann{\begin{eqnarray*}}
\def\eeann{\end{eqnarray*}}
\def\tn{\textnormal}
\def\A{\mathcal{A}}
\def\F{\mathcal{F}}
\def\M{\mathcal{M}}
\def\O{\mathcal{O}}
\def\S{\mathcal{S}}
\def\T{\mathcal{T}}
\def\W{\mathcal{W}}
\def\tn{\textnormal}
\def\es{\emptyset}
\def\sm{\setminus}
\def\het{\hat}
\def\a{\alpha}
\def\b{\beta}
\def\l{\ell}
\DeclareMathOperator{\SA}{SA}
\DeclareMathOperator{\LS}{LS}
\DeclareMathOperator{\BZ}{BZ}
\DeclareMathOperator{\Las}{Las}
\DeclareMathOperator{\BCC}{BCC}
\newlength{\mylen}
\title[A Comprehensive Analysis of Polyhedral Lift-and-Project Methods]{\bf A Comprehensive Analysis\\
of\\
Polyhedral Lift-and-Project Methods}
\thanks{Some of the material in this manuscript appeared in a preliminary
form in IPCO 2011 Proceedings, see \cite{AT2011} and
in the first author's PhD Thesis \cite{Au2014}.}
\author{Yu Hin Au}
\thanks{Yu Hin Au: Research of this author was supported in part by a Tutte
Scholarship, a Sinclair Scholarship, an NSERC scholarship, research grants
from University of Waterloo and Discovery Grants from NSERC. Department of Mathematics, Milwaukee School of Engineering, Milwaukee, Wisconsin, U.S.A. E-mail: au@msoe.edu}
\author{Levent Tun\c{c}el}
\thanks{Levent Tun\c{c}el: Research of this author was supported in part by research grants
from University of Waterloo and Discovery Grants from NSERC. Department of Combinatorics and
Optimization, Faculty of Mathematics, University of Waterloo, Waterloo, Ontario, N2L 3G1 Canada. E-mail: ltuncel@uwaterloo.ca}
\date{\today}
\keywords{combinatorial optimization, lift-and-project methods, design and analysis of algorithms
with discrete structures, integer programming, semidefinite programming, convex relaxations}
\begin{document}
\maketitle              

\begin{abstract}
We consider lift-and-project methods for combinatorial optimization problems and
focus mostly on those lift-and-project methods which generate polyhedral relaxations
of the convex hull of integer solutions.  We introduce many new variants
of Sherali--Adams and Bienstock--Zuckerberg operators.  These new operators
fill the spectrum of polyhedral lift-and-project operators in a way which
makes all of them more transparent, easier to relate to each other,
and easier to analyze.  We provide new techniques to analyze the
worst-case performances as well as relative strengths of these operators
in a unified way.  In particular, using the new techniques and a result of Mathieu and Sinclair from 2009, we prove that the polyhedral Bienstock--Zuckerberg operator requires at least
$\sqrt{2n}- \frac{3}{2}$ iterations to compute the matching polytope of the $(2n+1)$-clique.
We further prove that the operator requires approximately $\frac{n}{2}$ iterations to reach the stable set polytope of the $n$-clique,
if we start with the fractional stable set polytope. Lastly, we
show that some of the worst-case instances for the positive semidefinite
Lov\'{a}sz--Schrijver lift-and-project operator are also bad instances
for the strongest variants of the Sherali--Adams operator with positive semidefinite strengthenings, and discuss some consequences for integrality gaps of convex relaxations.
\end{abstract}

\section{Introduction}

Given a polytope $P \subseteq [0,1]^n$, we are interested in its integer hull (i.e., the convex hull of $0,1$ vectors in $P$), $P_I := \tn{conv}\left(P \cap \set{0,1}^n\right)$. While it is impossible to efficiently find
a description of $P_I$ for a general $P$ (unless $\mathcal{P} = \mathcal{NP}$),
we may use properties that we know are satisfied by points in $P_I$ to derive inequalities that are valid for $P_I$ but not $P$.

\emph{Lift-and-Project} methods provide a systematic way to generate a sequence of convex relaxations of $P_I$,
converging to the integer hull $P_I$.  These methods
go back to work by Balas and others in the late 1960s and the early 1970s. Some of the most attractive features of
these methods are:
\begin{itemize}
\item Convex relaxations of $P_I$ obtained after $O(1)$ iterations of the procedure are
\emph{tractable} provided $P$ is tractable.  Here, tractable may mean either that the
underlying linear optimization problem is polynomial-time solvable, say due to the existence
of a polynomial-time weak separation oracle for $P$; or, more strongly, that $P$ has an
explicitly given, polynomial size representation by linear inequalities (we will distinguish between
these two versions of tractability, starting with the strength chart given in~\autoref{fig0}).
\item
Many of these methods use \emph{lifted} (higher dimensional) representations for the relaxations.
Such representations sometimes allow compact (polynomial size in the input) convex representations
of exponentially many facets.
\item
Most of these methods allow easy addition of positive semidefiniteness constraints in the lifted space.
This feature can make the relaxations much stronger in some cases, without sacrificing
polynomial-time solvability (perhaps only approximately).
Moreover, these semidefiniteness constraints can represent an uncountable family of defining linear inequalities,
such as those of the theta body of a graph.
\item
Systematic generation of tighter and tighter relaxations converging to $P_I$ in at most $n$ rounds makes
the strongest of these methods good candidates for utilization in generating polynomial-time
approximation algorithms for hard problems, or for proving large integrality gaps
(hence providing a negative result about approximability in the underlying hierarchy of relaxations).
\end{itemize}

In the last two decades, many lift-and-project operators have been proposed (see, for example,
\cite{SheraliA90a}, \cite{LovaszS91a}, \cite{BalasCC93a}, \cite{Lasserre01a} and~\cite{BienstockZ04a}), and have been applied to various discrete optimization problems
(see, for example, \cite{SL97}, \cite{deKP2002}, \cite{PVZ07} and~\cite{GL07}).  Many families of facets of the stable set polytope of graphs are shown to be easily generated by these procedures~\cite{LovaszS91a, LiptakT03a}. Also studied are their performances on  max-cut~\cite{Laurent02a}, set covering~\cite{BienstockZ04a}, $k$-constraint satisfiability problems~\cite{Schoenebeck08a}, knapsack~\cite{KarlinMN11a}, sparsest cut~\cite{GuptaTW13a}, directed Steiner tree~\cite{FriggstadKKLS14a}, set partitioning~\cite{SL97},  TSP relaxations~\cite{CookD2001a, Cheung05a, CheriyanGGS13a}, and matching~\cite{StephenT99a, AguileraBN04a, MathieuS09a}.  For general properties of these operators and some comparisons among them, see~\cite{GoemansT01a}, \cite{Laurent03a}  and \cite{HongT08a}.

\begin{figure}[htb]
\begin{center}
\begin{tikzpicture}[y=1.1cm, x=1.2cm,>=latex', main node/.style={}, word node/.style={font=\footnotesize}]
\def\x{0}

\node at (0,0) (BCC) {$\BCC$};
\node at (1.5,0) (LS_0) {$\LS_0$};
\node at (3,0) (LS) {$\LS$};
\node at (4.5,0) (SA) {$\SA$};
\node[main node] at (6,0) (SA') {$\mathbf{SA'}$};
\node at (7.5,0) (BZ) {$\BZ$};
\node[main node] at (9,0) (BZ'') {$\mathbf{BZ''}$};
\node[main node] at (10.5,0) (BZ') {$\mathbf{BZ'}$};
\node at (3,1) (LS_+) {$\LS_+$};
\node[main node] at (4.5,1) (SA_+) {$\mathbf{SA_+}$};
\node[main node] at (6,1) (SA'_+) {$\mathbf{SA'_+}$};
\node at (7.5,1) (BZ_+) {$\BZ_+$};
\node[main node] at (9,1) (BZ''_+) {$\mathbf{BZ''_+}$};
\node[main node] at (10.5,1) (BZ'_+) {$\mathbf{BZ_+'}$};
\node at (7.5,2) (Las) {$\Las$};

        \draw[->] (BCC) -- (LS_0);
        \draw[->] (LS_0) -- (LS);
        \draw[->] (LS) -- (SA);
        \draw[->] (SA) -- (SA');
        \draw[->] (SA') -- (BZ);
        \draw[->] (BZ) -- (BZ'');
        \draw[->] (BZ'') -- (BZ');

        \draw[->] (LS_+) -- (SA_+);
        \draw[->] (SA_+) -- (SA'_+);
        \draw[->] (SA'_+) -- (BZ_+);
        \draw[->] (BZ_+) -- (BZ''_+);
        \draw[->] (BZ''_+) -- (BZ'_+);

        \draw[->] (LS) -- (LS_+);
        \draw[->] (SA) -- (SA_+);
        \draw[->] (SA') -- (SA'_+);
        \draw[->] (BZ) -- (BZ_+);
        \draw[->] (BZ') -- (BZ'_+);
        \draw[->] (BZ'') -- (BZ''_+);

        \draw[->] (SA_+) to [out= 45, in = 180] (Las);

\def\y{-1} 

\draw[dotted,thick] (-0.5,0.5) -- (12.7,0.5) ;
\draw[dotted,thick] (6.7,\y - 0.3) -- (6.7,2.5) ;
\draw[dotted,thick] (9.7,\y - 1.3) -- (9.7,2.5) ;

\node[word node, align=left, above] at (12,0.5)%
{PSD\\ Operators};
\node[word node, align=left, below] at (12,0.5)%
{Polyhedral\\ Operators};

\node[word node, align=left] at  (3.35,\y) (tract1)%
{Tractable w/ weak separation oracle for $P$};

\node[word node, align=left] at  (4.85, \y - 1) (tract2)%
{Tractable w/ facet description of $P$};

\node at (-0.5,\y) (n1) {};
\node at (6.7,\y) (n2) {};
\node at (-0.5,\y-1) (n3) {};
\node at (9.7,\y-1) (n4) {};

\path[->|]
(tract1) edge (n1)
(tract1) edge (n2)
(tract2) edge (n3)
(tract2) edge (n4);

\end{tikzpicture}
\end{center}
\caption{A strength chart of lift-and-project operators.}\label{fig0}
\end{figure}
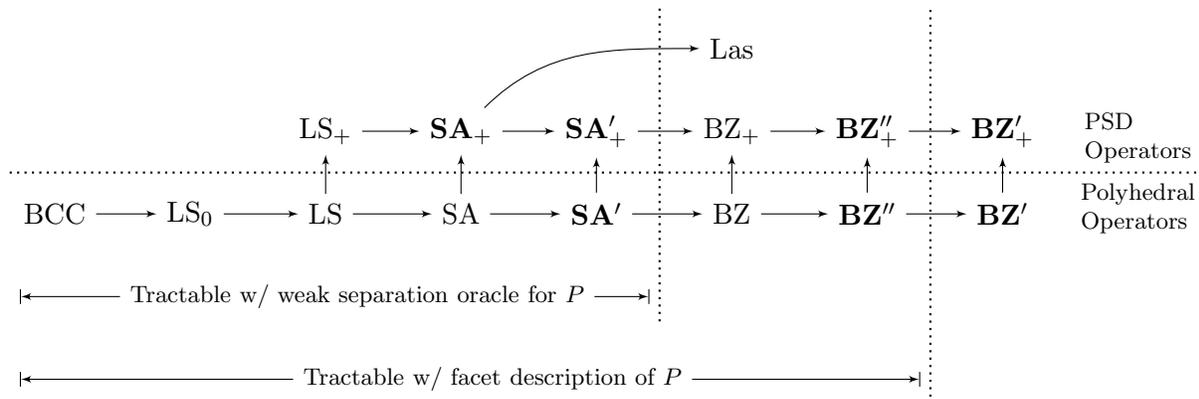

\autoref{fig0} provides a glimpse of the spectrum of polyhedral lift-and-project operators, as well as their
semidefinite strengthened counterparts. The operators $\BCC$ (due to Balas, Ceria and Cornu{\'e}jols~\cite{BalasCC93a}); $\LS_0, \LS$ and $\LS_+$ (due to Lov\'{a}sz and Schrijver~\cite{LovaszS91a}); $\SA$ (due to Sherali and Adams~\cite{SheraliA90a}); and $\BZ, \BZ_+$ (due to Bienstock and Zuckerberg~\cite{BienstockZ04a}) will be formally defined in the subsequent sections. The $\Las$ operator is due to Lasserre~\cite{Lasserre01a}. The boldfaced operators in the figure are the new ones proposed in the current paper, and each solid arrow in the chart denotes ``is dominated by'' (i.e., the operator that is at the head of an arrow is stronger than that at the tail). For instance, when applied to the same set $P$, the $\LS_0$ operator yields a relaxation that is at least as tight as that obtained by applying the $\BCC$ operator.

Observe that $\BCC$ is dominated by every other operator in~\autoref{fig0}. Since $\BCC$ admits a very short and elegant proof that it returns $P_I$ after $n$ iterations for every $P \subseteq [0,1]^n$, it follows immediately that \emph{every} operator in \autoref{fig0} converges to $P_I$ in at most $n$ iterations. More generally, if one can prove an upper-bound result for any operator $\Gamma$ in \autoref{fig0}, then the same result applies to all operators in the diagram that can be reached from $\Gamma$ by a directed path. On the other hand, any lower-bound result on the $\BZ'$ operator implies the same result for all polyhedral lift-and-project operators in \autoref{fig0}. Likewise, to obtain a lower bound result for all lift-and-project operators shown in the diagram, it suffices to show that the result holds for $\BZ'_+$ and $\Las$. (For some bad instances for $\Las$, see~\cite{Laurent02a} and~\cite{Cheung07a}. See also~\cite{Schoenebeck08a} and~\cite{Tulsiani09a} for some integrality gap results on $\Las$ relaxations.)

As seen in \autoref{fig0}, the strongest polyhedral lift-and-project operators known to date are
$\LS, \SA$ and $\BZ$. We are interested in these strongest operators because they provide the
 strongest tractable relaxations obtained this way. On the other hand, if we want to prove that some combinatorial optimization problem is difficult to attack by lift-and-project methods, then we would hope to establish them on the strongest existing hierarchy for the strongest negative results. For example, some of the non-approximability results on vertex cover are based on the $\LS_+$ operator~\cite{GeorgiouMPT10a, SchoenebeckTT07a}, and some other integrality gap results are based on $\SA$~\cite{CMM09}.

Furthermore, it was shown in~\cite{ChanLRS13a} that non-approximability results for the $\SA$ relaxations of approximate constraint satisfaction problems can be extended to lower bound results on the extension complexity (i.e., the smallest number of variables needed to represent a given set as the projection of a tractable set in higher dimension) of the max-cut and max 3-sat polytopes. The reader may refer to~\cite{Yannakakis91a} for the first major progress on the extension complexity of polytopes that arise from combinatorial optimization problems, and~\cite{Goemans09a, FioriniMPTW12a, Rothvoss14a} for some of the recent breakthroughs in this line of work.

Therefore, by understanding the more powerful lift-and-project operators, we could either obtain better approximations for hard combinatorial optimization problems, or lay some of the groundwork for yet stronger non-approximability results. Moreover, we shall see that these analyses typically also lead to other crucial information about the underlying hierarchy of convex relaxations, such as their integrality gaps.

This paper will be organized as follows. In Section 2, we introduce many of the existing lift-and-project methods, as well as $\SA'$ and $\BZ'$ --- strengthened variants of $\SA$ and $\BZ$, respectively. In particular, $\BZ$ is a substantial procedure with many complicated details, and we believe that our version $\BZ'$ is simpler to present and analyze. We will mostly use $\BZ'$ to establish lower-bound results. Since $\BZ'$ dominates $\BZ$, it follows that all of these lower-bound results also apply to $\BZ$.  We shall also see that these operators can all be seen as lifting to sets of matrices whose rows and columns indexed by subsets of $\set{0,1}^n$, a framework exposed by Lov\'{a}sz and Schrijver~\cite{LovaszS91a} and extensively used by Bienstock and Zuckerberg~\cite{BienstockZ04a}.

In Section 3, we introduce notions such as admissible lift-and-project operators and measure consistency for matrices and vectors, and identify situations in which some variables in the lifted space do not help generate cuts. This provides a template that can streamline the analyses of the worst-case performances as well as relative strengths of various lift-and-project methods. We show that, under certain conditions, the performance of $\SA'$ and $\BZ$ are closely related to each other. Since $\SA'$ inherits many properties from the well-studied $\SA$ operator, this connection provides another venue to understanding and analyzing $\BZ$. Next, we utilize the tools we have established and prove that the $\BZ$ operator requires at least $\sqrt{2n}-\frac{3}{2}$ iterations to compute the matching polytope of the $(2n+1)$-clique, and approximately $\frac{n}{2}$ iterations to compute the stable set polytope of the $n$-clique. This establishes the first examples in which $\BZ$ requires more than $O(1)$ iterations to reach the integer hull.

Next, in Section 4, we turn our focus to lift-and-project operators that utilize positive semidefiniteness constraints. We construct two strong, semidefinite versions of the Sherali--Adams operator that we call $\SA_+$ and $\SA_+'$. There are other weaker versions of these operators in the recent literature called \emph{Sherali--Adams SDP} which have been previously studied, among others, by Chlamtac and Singh~\cite{ChlamtacS08a} and Benabbas et al.~\cite{BenabbasGM10a, BenabbasM10a, BenabbasCGM11a, BenabbasGM12a}, even though our versions are the strongest yet. Using techniques developed in Section 3, we relate the performance of $\SA_+'$ and $\BZ_+$ (the $\BZ$ operator enhanced with an additional positive semidefiniteness constraint) under certain conditions. Next, we develop some tools for proving upper-bound results, and show that $\SA_+'$ and $\BZ_+'$ (a strengthened and simplified version of $\BZ_+$) require at most $n - \left\lfloor \frac{ \sqrt{2n+1} -1}{2} \right\rfloor$ and $\left\lceil \sqrt{2n+ \frac{1}{4}} - \frac{1}{2} \right\rceil$ iterations, respectively, to compute the matching polytope of the $(2n+1)$-clique. We then show that positive semdefiniteness constraints do not help in some cases, and prove that some well-known worst-case instances for $\LS$ and $\LS_+$ extend to give worst-case instances for $\SA$ and $\SA_+$.

Finally, we conclude the paper by illustrating how the analyses and the tools we provided may be used to prove integrality gaps for various classes of relaxations obtained from lift-and-project operators with some desirable invariance properties. The details of the original $\BZ$ and $\BZ_+$ operators, as well as their relationships with our new variants, are given in the Appendix.

Several of our results can be seen as ``approximate converses'' of the dominance relationship among various lift-and-project operators. Such relationships are represented by dashed arrows in~\autoref{fig2}. As we shall see, sometimes a weaker operator can be guaranteed to perform at least as well as a stronger one, by an appropriate increase of iterate number and/or certain assumptions on the given polytope $P$. These reverse dominance results, together with the new operators we define and other tools we provide, fill the spectrum of lift-and-project operators in a way which makes all of them more transparent, easier to relate to each other, and easier to analyze.

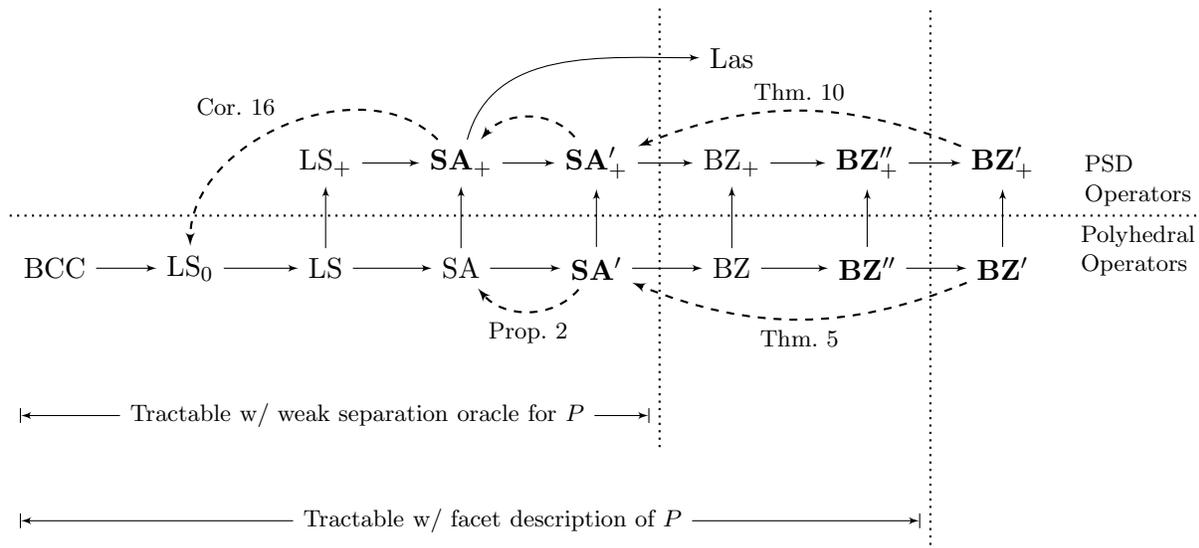
\begin{figure}[htb]
\begin{center}
\begin{tikzpicture}[y=1.4cm, x=1.2cm,>=latex', main node/.style={}, word node/.style={font=\footnotesize}]
\def\x{0}

\node at (0,0) (BCC) {$\BCC$};
\node at (1.5,0) (LS_0) {$\LS_0$};
\node at (3,0) (LS) {$\LS$};
\node at (4.5,0) (SA) {$\SA$};
\node[main node] at (6,0) (SA') {$\mathbf{SA'}$};
\node at (7.5,0) (BZ) {$\BZ$};
\node[main node] at (9,0) (BZ'') {$\mathbf{BZ''}$};
\node[main node] at (10.5,0) (BZ') {$\mathbf{BZ'}$};
\node at (3,1) (LS_+) {$\LS_+$};
\node[main node] at (4.5,1) (SA_+) {$\mathbf{SA_+}$};
\node[main node] at (6,1) (SA'_+) {$\mathbf{SA'_+}$};
\node at (7.5,1) (BZ_+) {$\BZ_+$};
\node[main node] at (9,1) (BZ''_+) {$\mathbf{BZ''_+}$};
\node[main node] at (10.5,1) (BZ'_+) {$\mathbf{BZ_+'}$};
\node at (7.5,2) (Las) {$\Las$};

        \draw[->] (BCC) -- (LS_0);
        \draw[->] (LS_0) -- (LS);
        \draw[->] (LS) -- (SA);
        \draw[->] (SA) -- (SA');
        \draw[->] (SA') -- (BZ);
        \draw[->] (BZ) -- (BZ'');
        \draw[->] (BZ'') -- (BZ');

        \draw[->] (LS_+) -- (SA_+);
        \draw[->] (SA_+) -- (SA'_+);
        \draw[->] (SA'_+) -- (BZ_+);
        \draw[->] (BZ_+) -- (BZ''_+);
        \draw[->] (BZ''_+) -- (BZ'_+);

        \draw[->] (LS) -- (LS_+);
        \draw[->] (SA) -- (SA_+);
        \draw[->] (SA') -- (SA'_+);
        \draw[->] (BZ) -- (BZ_+);
        \draw[->] (BZ') -- (BZ'_+);
        \draw[->] (BZ'') -- (BZ''_+);

        \draw[->] (SA_+) to [out= 75, in = 180] (Las);

\def\z{22} 
        \draw[->,dashed, thick] (BZ'_+) to [out=180- \z, in = \z] node [above ] {{\footnotesize Thm.\ 10}}(SA'_+);
        \draw[->,dashed, thick] (BZ') to [out=180+ \z , in= 360- \z] node [below] {{\footnotesize Thm.\ 5}}(SA');

\def\w{50}

        \draw[->,dashed, thick] (SA'_+) to [out=180- \w, in = \w]  (SA_+);
        \draw[->,dashed, thick] (SA') to [out=180+ \w , in= 360- \w] node [below] {{\footnotesize Prop.\ 2}} (SA);

        \draw[->,dashed, thick] (SA_+) to [out=135, in= 90] node [above left] {{\footnotesize Cor.\ 16}}(LS_0);

\def\y{-1.4} 

\draw[dotted,thick] (-0.5,0.5) -- (12.7,0.5) ;
\draw[dotted,thick] (6.7,\y - 0.3) -- (6.7,2.5) ;
\draw[dotted,thick] (9.7,\y - 1.3) -- (9.7,2.5) ;

\node[word node, align=left, above] at (12,0.5)%
{PSD\\ Operators};
\node[word node, align=left, below] at (12,0.5)%
{Polyhedral\\ Operators};

\node[word node, align=left] at  (3.35,\y) (tract1)%
{Tractable w/ weak separation oracle for $P$};

\node[word node, align=left] at  (4.85, \y - 1) (tract2)%
{Tractable w/ facet description of $P$};

\node at (-0.5,\y) (n1) {};
\node at (6.7,\y) (n2) {};
\node at (-0.5,\y-1) (n3) {};
\node at (9.7,\y-1) (n4) {};

\path[->|]
(tract1) edge (n1)
(tract1) edge (n2)
(tract2) edge (n3)
(tract2) edge (n4);

\end{tikzpicture}
\end{center}
\caption{An illustration of several restricted reverse dominance results (dashed arrows) in this paper.}\label{fig2}
\end{figure}

\section{Preliminaries}

In this section, we describe several lift-and-project operators that produce polyhedral relaxations, and establish some notation. 
One of the most fundamental ideas behind the lift-and-project approach is convexification, which can be traced back to Balas' work on disjunctive cuts in the 1970s. For convenience, we denote the set $\set{1, 2, \ldots,n}$ by $[n]$ herein. Observe that, given $P \subseteq [0,1]^n$, if we have mutually disjoint sets $Q_1, \ldots, Q_{\l} \subseteq P$ such that their union, $\bigcup_{i=1}^{\l} Q_i$, contains all integral points in $P$, then we can deduce that $P_I$ is contained in $\tn{conv}\left(\bigcup_{i=1}^{\l} Q_i \right)$, which therefore is a potentially tighter relaxation of $P_I$ than $P$.  Perhaps the simplest way to illustrate this idea is via the operator devised by Balas, Ceria and Cornu\'{e}jols~\cite{BalasCC93a} which we call the $\BCC$ operator. Given $P \subseteq [0,1]^n$ and an index $i \in [n]$, define
\[
\BCC_i(P) := \tn{conv}\left( \set{x \in P: x_i \in \set{0,1}}\right).
\]
Moreover, we can apply $\BCC_i$ followed by $\BCC_j$ to a polytope $P$ to make progress. In fact, it is well-known that for every $P \subseteq [0,1]^n$,
\[
\BCC_1(\BCC_2( \cdots (\BCC_n (P)) \cdots )) = P_I.
\]
This establishes that for every polytope $P$, one can obtain its integer hull with at most $n$ applications of the $\BCC$ operator.

While iteratively applying $\BCC$ in all $n$ indices is intractable (unless $\mathcal{P} = \mathcal{NP}$), applying them simultaneously to $P$ and intersecting them is not. Furthermore, it is easy to see that $P_I$ is contained in the intersection of these $n$ sets. Thus,
\[
\LS_0(P) := \bigcap_{i \in [n]} \BCC_i(P),
\]
devised by Lov{\' a}sz and Schrijver~\cite{LovaszS91a}, is a relaxation of $P_I$ that is at least as tight as $\BCC_i(P)$ for all $i \in [n]$. \autoref{fig1} illustrates how $\BCC$ and $\LS_0$ operate in two dimensions.

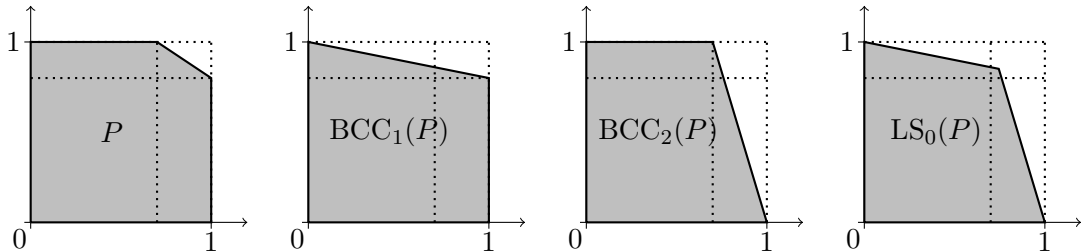
\begin{figure}[htb]

\begin{center}

\begin{tabular}{cccc}

\begin{tikzpicture}[scale = 0.8, y=3cm, x=3cm]
	\draw[->] (0,0) -- coordinate (x axis mid) (1.2,0);
    	\draw[->] (0,0) -- coordinate (y axis mid) (0,1.2);

\foreach \x in {0,...,1}
     		\draw (\x,1pt) -- (\x,-3pt);
    	\foreach \y in {0,...,1}
     		\draw (1pt,\y) -- (-3pt,\y) ;

\node[anchor=north] at (1,0) (label2) {1};
\node[anchor=east] at (0,1) (label3) {1};
\node at (-0.06,-0.09) {0};

\draw[fill = lightgray, thick] (0,0) -- (1,0) -- (1, 0.8) -- (0.7,1) -- (0,1) -- (0,0);
\draw[dotted, thick] (1,0) -- (1,1);
\draw[dotted, thick ] (0,1) -- (1,1);

\draw[dotted, thick] (0.7,0) -- (0.7,1);
\draw[dotted, thick] (0,0.8) -- (1,0.8);

\node at (0.45,0.5) (label) {$P$};
\end{tikzpicture}

&

\begin{tikzpicture}[scale = 0.8, y=3cm, x=3cm]
	\draw[->] (0,0) -- coordinate (x axis mid) (1.2,0);
    	\draw[->] (0,0) -- coordinate (y axis mid) (0,1.2);

\foreach \x in {0,...,1}
     		\draw (\x,1pt) -- (\x,-3pt);
    	\foreach \y in {0,...,1}
     		\draw (1pt,\y) -- (-3pt,\y) ;

\node[anchor=north] at (1,0) (label2) {1};
\node[anchor=east] at (0,1) (label3) {1};
\node at (-0.06,-0.09) {0};

\draw[fill = lightgray, thick] (0,0) -- (1,0) -- (1, 0.8) -- (0,1) -- (0,0);
\node at (0.45,0.5) (label) {$\BCC_1(P)$};
\draw[dotted, thick] (1,0) -- (1,1);
\draw[dotted, thick] (0,1) -- (1,1);
	
\draw[dotted, thick] (0.7,0) -- (0.7,1);
\draw[dotted, thick] (0,0.8) -- (1,0.8);

\end{tikzpicture}

&

\begin{tikzpicture}[scale = 0.8, y=3cm, x=3cm]
	\draw[->] (0,0) -- coordinate (x axis mid) (1.2,0);
    	\draw[->] (0,0) -- coordinate (y axis mid) (0,1.2);

\foreach \x in {0,...,1}
     		\draw (\x,1pt) -- (\x,-3pt);
    	\foreach \y in {0,...,1}
     		\draw (1pt,\y) -- (-3pt,\y) ;

\node[anchor=north] at (1,0) (label2) {1};
\node[anchor=east] at (0,1) (label3) {1};
\node at (-0.06,-0.09) {0};
	
\draw[fill = lightgray, thick] (0,0) -- (1,0) -- (0.7,1) -- (0,1) -- (0,0);
\node at (0.4,0.5) (label) {$\BCC_2(P)$};

\draw[dotted, thick] (1,0) -- (1,1);
\draw[dotted, thick] (0,1) -- (1,1);

\draw[dotted, thick] (0.7,0) -- (0.7,1);
\draw[dotted, thick] (0,0.8) -- (1,0.8);

\end{tikzpicture}

&

\begin{tikzpicture}[scale = 0.8, y=3cm, x=3cm]
	\draw[->] (0,0) -- coordinate (x axis mid) (1.2,0);
    	\draw[->] (0,0) -- coordinate (y axis mid) (0,1.2);

\foreach \x in {0,...,1}
     		\draw (\x,1pt) -- (\x,-3pt);
    	\foreach \y in {0,...,1}
     		\draw (1pt,\y) -- (-3pt,\y) ;

\node[anchor=north] at (1,0) (label2) {1};
\node[anchor=east] at (0,1) (label3) {1};
\node at (-0.06,-0.09) {0};

\draw[fill = lightgray, thick] (0,0) -- (1,0) -- (35/47, 40/47) -- (0,1) -- (0,0);
\node at (0.4,0.5) (label) {$\LS_0(P)$};

\draw[dotted, thick] (1,0) -- (1,1);
\draw[dotted, thick] (0,1) -- (1,1);

\draw[dotted, thick] (0.7,0) -- (0.7,1);
\draw[dotted, thick] (0,0.8) -- (1,0.8);

\end{tikzpicture}

\end{tabular}

\end{center}
\caption{An illustration of $\BCC$ and $\LS_0$ in two dimensions.}\label{fig1}
\end{figure}

Before we look into operators that are even stronger (and more sophisticated), it is helpful to understand the following alternative description of $\LS_0$. Given $x \in [0,1]^n$,
let $\het{x}$ denote the vector $\begin{pmatrix} 1 \\ x \end{pmatrix}$ in $\mathbb{R}^{n+1}$, where the new coordinate is indexed by zero. Let $e_i$ denote the $i^{\tn{th}}$ unit vector (of appropriate size), and for any square matrix $M$, let $\tn{diag}(M)$ denote the vector formed by the diagonal entries of $M$. Next, given $P \subseteq [0,1]^n$, define the cone
\[
K(P) := \set{\begin{pmatrix}\lambda \\ \lambda x \end{pmatrix} \in \mathbb{R}^{n+1}: \lambda \geq 0, x \in P}.
\]
Then, it is not hard to check that
\begin{eqnarray*}
\LS_0(P) &=& \left\{ x \in \mathbb{R}^n : \exists Y \in \mathbb{R}^{(n+1) \times (n+1)}, Ye_i,  Y(e_0 - e_i)\in K(P),~\forall i \in [n],\right.\\
&& \left. Ye_0 = Y^{\top}e_0 = \tn{diag}(Y) = \het{x} \right\}.
\end{eqnarray*}
To see that $\LS_0(P) \supseteq P_I$ in this perspective, observe that given any integral vector $x \in P$, the matrix $Y := \het{x} \het{x}^{\top}$ is a matrix which ``certifies''
that $x \in \LS_0(P)$. Then $P_I \subseteq \LS_0(P)$ follows from the fact that the latter is obviously a convex set.

Now, observe that $\het{x}\het{x}^{\top}$ is symmetric for all $x \in \set{0,1}^n$. Thus, if we let $\mathbb{S}^n$ denote the set of $n$-by-$n$ real, symmetric matrices, then
\[
\LS(P) := \set{x \in \mathbb{R}^n : \exists Y \in \mathbb{S}^{n+1}, Ye_i,  Y(e_0 - e_i)\in K(P),~\forall i \in [n], Ye_0 = \tn{diag}(Y) = \het{x}}
\]
also contains $P_I$. By enforcing a symmetry constraint on the matrices in the lifted space (and still retaining all integral points in $P$), we see that $\LS(P)$ is a potentially tighter relaxation than $\LS_0(P)$. We can also apply these operators iteratively to a polytope $P$ to gain progressively tighter relaxations. Let $\LS_0^k(P)$ (resp. $\LS^k(P)$) denote the set obtained from applying $\LS_0$ (resp. $\LS$) to $P$ iteratively for $k$ times. Since it is apparent from their definitions that
$\LS(P) \subseteq \LS_0(P) \subseteq \BCC_i(P)$,
for every $i \in [n]$, it follows that $\LS_0^n(P) = \LS^n(P) = P_I$, for every $P \subseteq [0,1]^n$.

In the two aforementioned Lov{\'a}sz--Schrijver operators, the certificate matrices all have dimension $(n+1)$ by $(n+1)$. We next look into the potential of lifting the initial relaxation $P \subseteq [0,1]^n$ to sets of even higher dimensions. From here on, we denote $\set{0,1}^n$ by $\F$, and define $\mathcal{A} := 2^{\F}$, the power set of $\F$. For each $x \in \F$, we define the vector $x^{\A} \in \mathbb{R}^{\A}$ where
\[
x^{\A}_{\alpha} = \left\{
\begin{array}{ll}
1 & \tn{if $x \in \alpha$;}\\
0 & \tn{otherwise.}
\end{array}
\right.
\]
That is, each coordinate of $\A$ corresponds to a subset of the vertices of the $n$-dimensional unit hypercube, and $x^{\A}_{\alpha} = 1$ if and only if the point $x$ is contained in the set $\a$.
It is not hard to see that for all $x \in \F$, we have $x^{\A}_{\F} = 1$, and
$x^{\A}_{\set{y \in \F : y_i=1}} = x_i, \forall i \in [n].$
Another important property of $x^{\A}$ is that, given disjoint subsets $\alpha_1, \alpha_2, \ldots, \alpha_k \subseteq \b \subseteq \F$, we know that
\begin{equation}\label{setpartition}
x^{\A}_{\alpha_1} + x^{\A}_{\alpha_2} + \cdots + x^{\A}_{\alpha_k} \leq x^{\A}_{\b},
\end{equation}
and equality holds if $\set{ \alpha_1, \alpha_2, \ldots, \alpha_k}$ partitions $\b$.

Thus, for any given $x \in \F$, if we define $Y_{\A}^x := x^{\A}(x^{\A})^{\top}$, then the entries of $Y^x_{\A}$ have considerable structure. Most notably, the following must hold:
\begin{itemize}
\item[(P1)]
$Y_{\A}^x e_{\F} = (Y_{\A}^x)^{\top} e_{\F} = \tn{diag}(Y_{\A}^x) = x^{\A}$;
\item[(P2)]
$Y_{\A}^x e_{\alpha} \in \set{0,x^{\A}},~\forall \alpha \in {\A}$;
\item[(P3)]
$Y_{\A}^x \in \mathbb{S}^{\A}$;
\item[(P4)]
$Y_{\A}^x[\alpha,\beta] = 1 \iff x \in \alpha \cap \beta$;
\item[(P5)]
if $\a_1 \cap \b_1 = \a_2 \cap \b_2$, then $Y_{\A}^x[\a_1, \b_1] = Y_{\A}^x[\a_2, \b_2]$;
\item[(P6)]
every row and column of $Y_{\A}^x$ satisfies~\eqref{setpartition}.
\end{itemize}

Of course, $Y_{\A}^x$ has double-exponential size (in $n$), and explicitly constructing elements in a lifted space of such a high dimension could yield an intractable structure, which makes the underlying algorithm no better than simply enumerating the integral points in $P$. Nevertheless, we can try to obtain a tight relaxation by only working with polynomial-size submatrices of $Y_{\A}^x$, and imposing  constraints that are relaxations of the conditions (P1) to (P6), in hope of capturing some important inequalities that are valid for $P_I$ but not $P$. Zuckerberg~\cite{Zuckerberg03a} showed that most of the existing lift-and-project operators can be interpreted under this common theme.

We next express the operators devised by Sherali and Adams~\cite{SheraliA90a} in this language. Given a set of indices $S \subseteq [n]$ and $t \in \set{0,1}$, we define
\[
S |_t := \set{ x \in \F : x_i = t,~\forall i \in S}.
\]
Note that $\es|_0 = \es|_1 = \F$. Also, to reduce cluttering, we write $i|_t$ instead of $\set{i}|_t$. Next, given any integer $\l \in \set{0,1,\ldots,n}$, we define $\A_{\l} := \set{S|_1 \cap T|_0 : S,T \subseteq [n], S \cap T = \es, |S| + |T| \leq \l}$ and $\A_{\l}^+ := \set{S|_1 : S \subseteq [n],  |S| \leq \l}$. For instance,
\[
\A_{1} = \set{\F, 1|_1, 2|_1, \ldots, n|_1, 1|_0, 2|_0, \ldots, n|_0},
\]
while
\[
\A_{1}^+ = \set{\F, 1|_1, 2|_1, \ldots, n|_1}.
\]
Also, given any vector $y \in \mathbb{R}^{\A'}$ for some $\A' \subseteq \A$ which contains $\F$ and $i|_1$ for all $i \in [n]$, we let $\het{x}(y) := (y_{\F}, y_{1|_1}, \ldots, y_{n|_1})^{\top}$. To relate these vectors with the  $\het{x}$ vectors defined previously, sometimes we may also alternatively index the entries of $\het{x}(y)$ as $(y_0, y_1, \ldots, y_n)^{\top}$.

For any fixed integer $k \in [n]$, the $\SA^k$ operator can be defined as follows:

\begin{enumerate}
\item
\noindent
Let $\tilde{\SA}^k(P)$ denote the set of matrices $Y \in \mathbb{R}^{\A^+_1 \times \A_k}$ which satisfy all of the following conditions:
\begin{itemize}
\item[($\SA 1$)]
$Y[\F, \F] = 1$.
\item[($\SA 2$)]
$Ye_{\a} \in K(P)$, for every $\a \in \A_k$.
\item[($\SA 3$)]
For every $S|_1 \cap T|_0 \in \A_{k-1}$,
\[
Ye_{S|_1 \cap T|_0 \cap j|_1} + Ye_{S|_1 \cap T|_0 \cap j|_0} = Ye_{S|_1 \cap T|_0}, \quad \forall j \in [n] \sm (S \cup T).
\]
\item[($\SA 4$)]
For all $\a \in \A^+_1,  \b \in \A_k$ such that $\a \cap \b = \es$, $Y[\a,\b] = 0$.
\item[($\SA 5$)]
For all $\a_1,\a_2 \in \A^+_1, \b_1, \b_2 \in \A_k$ such that $\a_1 \cap \b_1 = \a_2 \cap \b_2$, $Y[\a_1, \b_1] = Y[\a_2, \b_2]$.
\end{itemize}
\item
Define
\[
\SA^k(P) := \set{x \in \mathbb{R}^n: \exists Y \in \tilde{\SA}^k(P),  Y e_{\F}= \het{x}}.
\]
\end{enumerate}

The $\SA^k$ operator was originally described by linearizing polynomial inequalities, as follows: given an inequality $\sum_{i=1}^n a_i x_i \leq a_0$ that is valid for $P$, disjoint subsets of indices $S,T \subseteq [n]$ such that $|S| + |T| \leq k$, $\SA^k$ generates the inequality
\begin{equation}\label{SAoriginal}
\left( \prod_{i\in S} x_i\right) \left( \prod_{i \in T} (1- x_i) \right) \left(\sum_{i=1}^n a_i x_i \right) \leq \left( \prod_{i\in S} x_i\right) \left( \prod_{i \in T} (1- x_i) \right)a_0,
\end{equation}
and obtains a linear inequality by replacing the monomial $x_i^j$ with $x_i$ (for all $j \geq 2$) in all terms, and then by using a new variable to represent each nontrivial product of monomials. In our definition of $\SA^k$, the linearized inequality would be
\[
\sum_{i=1}^n a_i Y[i|_1, S|_1 \cap T|_0] \leq a_0 Y[\F, S|_1 \cap T|_0],
\]
which is enforced by $(\SA 2)$ on the column of $Y$ indexed by the set $S|_1 \cap T|_0$. Also, for any set of indices $U \subseteq [n]$, the product of monomials $\prod_{i \in U} x_i$ could appear multiple times in the original formulation when we generate~\eqref{SAoriginal} using different $S$ and $T$. Then $\SA^k$ identifies them all by the variable $x_U$ in the linearized formulation. This requirement is enforced by ($\SA 5$) in our definition. Also observe that ($\SA 3$) ensures the matrix entries representing the products $x_j \left( \prod_{i\in S} x_i\right) \prod_{i \in T} \left(1- x_i \right)$ and
$(1-x_j)\left(\prod_{i\in S} x_i\right) \prod_{i \in T} \left(1- x_i \right)$ do sum up to that representing $\left( \prod_{i\in S} x_i\right) \prod_{i \in T} \left( 1- x_i \right)$. Finally, notice that the monomial $x_j(1-x_j) = x_j - x_j^2$ vanishes after linearizing. Thus, if $S,T$ are not disjoint, the product $\left( \prod_{i\in S} x_i\right) \prod_{i \in T} \left(1- x_i \right)$ vanishes, and ($\SA 4$) enforces that the corresponding matrix entries take on value zero.

It is not hard to see that $\SA^1(P) = \LS(P)$. In general, $\SA$ obtains extra strength over $\LS$ by lifting $P$ to a set of matrices of higher dimension, and using some properties of sets in $\A$ to identify variables in the lifted space.  For a comparison of $\SA$ and $\LS$, see Laurent~\cite{Laurent03a}.

Finally, we look into the polyhedral lift-and-project operator devised by Bienstock and Zuckerberg~\cite{BienstockZ04a}. Recall that the idea of convexification requires a collection of disjoint subsets of $P$ whose union contains all integral points in $P$. So far, every operator that we have seen obtains these sets by intersecting $P$ with faces of $[0,1]^n$. However, sometimes it is beneficial to allow more flexibility in choosing the way we partition the integral points in $P$. For example, consider
\[
P := \set{ x \in [0,1]^n : \sum_{i=1}^n x_i \leq  n- \frac{1}{2}}.
\]
In this case, $\SA^{n-1}(P)$, a relaxation obtained from using convexification with exponentially many sets that are all intersections of $P$ and faces of $[0,1]^n$, still strictly contains $P_I$. On the other hand, if we define
\[
Q_j := \set{ x \in P : \sum_{i=1}^n x_i = j},
\]
for every $j \in \set{0,1,\ldots, n}$, then every integral point in $P$ is contained in $Q_j$ for some $j$, and
\[
P_I = \tn{conv}\left(\bigcup_{i=0}^n Q_j \right).
\]
We will see in the next section that, given any set $P \subseteq [0,1]^n$, the set $\tn{conv}\left(\bigcup_{i=0}^n Q_j \right)$ can be described as the projection of a set of dimension $O(n^2)$ that is tractable as long as $P$ is.

Bienstock and Zuckerberg~\cite{BienstockZ04a} utilized this type of ideas and invented operators that use variables in $\A$ that were not exploited by the operators proposed earlier, in conjunction with some new constraints. We will denote their polyhedral operator by $\BZ$, but we also present variants of it called $\BZ'$ and $\BZ''$. These modified operators have the advantage of being stronger, and are also simpler to present. Moreover, since we are mostly interested in applying these operators to polytopes that arise from set packing problems (such as the stable set and matching problems of graphs), we will state versions of these operators that only apply to lower-comprehensive polytopes. We will discuss this in more detail after stating the elements of their operators.

Suppose we are given a polytope $P := \set{x \in [0,1]^n : Ax \leq b}$, where $A \in \mathbb{R}^{m \times n}$ is nonnegative and $b \in \mathbb{R}^m$ is positive (this implies that $P$ is lower-comprehensive). The $\BZ'$ operator can be viewed as a two-step process. The first step is \emph{refinement}.
Given a vector $v$, let $\supp(v)$ denote the \emph{support} of $v$. Also, for every $i \in [m]$, let $A^i$ denote the $i^{\tn{th}}$ row of $A$. If $O \subseteq [n]$ satisfies
\begin{itemize}
\item
$O \subseteq \supp(A^i)$;
\item
$\sum_{j \in O} A^i_j > b_i$; and
\item
$|O| \leq k+1$ or $|O| \geq |\supp(A^i)| - (k+1)$
\end{itemize}
for some $i \in [m]$, then we call $O$ a \emph{$k$-small obstruction}. Let $\O_k$ denote the collection of all $k$-small obstructions of $P$ (or more precisely, of the system $Ax\leq b$). Notice that, for every obstruction $O \in \O_k$, and integral vector $x \in P$, the inequality $\sum_{i \in O} x_i \leq |O| - 1$ holds. Thus,
\[
\O_k(P) := \set{x \in P: \sum_{i \in O} x_i \leq |O|-1,~\forall O \in \O_k}
\]
is a relaxation of $P_I$ that is potentially tighter than $P$. 

The second step of the $\BZ'^k$ operator is \emph{lifting}. Before we give the details of this step, we need another intermediate set of indices, called \emph{walls}. For every $k \geq 1$, we define
\[
\W_k := \set{ \bigcup_{i,j \in [\l],  i\neq j} (O_i \cap O_j) : O_1,\ldots, O_{\l} \in \O_k, 2 \leq \l \leq k+1} \cup \set{ \set{1}, \ldots, \set{n}}.
\]
That is, each subset of up to $(k+1)$ $k$-small obstructions generate a wall, which is the set of elements that appear in at least two of the given obstructions. We also ensure that the singleton sets of indices are walls. Next, we define the collection of \emph{tiers}
\[
\T_k := \set{S \subseteq [n]: \exists W_{i_{1}}, \ldots, W_{i_{k}} \in \W_k, S \subseteq \bigcup_{j=1}^{k} W_{i_{j}}}.
\]
That is, we define a set of indices $S$ to be a tier if there exist $k$ walls whose union contains $S$. Note that every subset of $[n]$ of size up to $k$ is a tier. Finally, given a set $U \subseteq [n]$ and a nonnegative integer $r$, we define
\[
U |_{< r} := \set{ x \in \F : \sum_{i \in U} x_i \leq r-1}.
\]
We shall see that the elements in $\A$ that are being generated by $\BZ'$ all take the form \\ $S|_1 \cap T|_0 \cap U|_{< r}$, where $S,T,U$ are disjoint sets of indices. 
Next, we describe the lifting step of $\BZ'^k$:

\settowidth{\mylen}{$(S \sm (T \cup U) )|_1 \cap T|_0 \cap U|_{<|U|- (k-|T|)}$,}
\begin{enumerate}
\item
Define $\A'$ to be the set consisting of the following. For each tier $S \in \T_k$, include:
\begin{align}
\llap{\textbullet\hspace{92pt}} \makebox[\mylen][c]{$(S \sm T)|_1 \cap T|_0$,}\nonumber  \end{align}

for all $T \subseteq S$ such that $|T| \leq k$;
\begin{align}
\llap{\textbullet\hspace{92pt}}  (S \sm (T \cup U) )|_1 \cap T|_0 \cap U|_{<|U|- (k-|T|)},\nonumber  \end{align}

for every $T,U \subseteq S$ such that $U \cap T = \es, |T| < k$ and $|U| + |T| > k$.
\vspace{2mm}

\noindent
We say these variables (indexed by the above sets) are associated with the tier $S$.
\item
Let $\tilde{\BZ}'^k(P)$ denote the set of matrices $Y \in \mathbb{S}^{\A'}$ that satisfy all of the following conditions:
\begin{itemize}
\item[($\BZ'1$)]
$Y[\F, \F] = 1$.
\item[($\BZ'2$)]
For every column $y$ of the matrix $Y$,
\begin{itemize}
\item[(i)]
$0 \leq y_{\alpha} \leq y_{\F}$, for all $\a \in \A'$.
\item[(ii)]
$\het{x}(y) \in K(\O_k(P))$.
\item[(iii)]
$y_{i|_1} + y_{i|_0} = y_{\F}$, for every $i \in [n]$.
\item[(iv)]
For each $\a \in \A'$ of the form of $S|_1 \cap T|_0$ impose the inequalities
\begin{eqnarray}
\label{Ck41} y_{i|_1} &\geq& y_{\a}, \quad \forall i \in S; \\
\label{Ck42} y_{i|_0} &\geq& y_{\a}, \quad \forall i \in T; \\
\label{sumwall1} y_{\a} + y_{(S \cup \set{i})|_1 \cap (T \sm \set{i})|_0}  &= & y_{S|_1 \cap (T \sm \set{i})|_0}, \quad \forall i \in T; \\
\label{Ck43} \sum_{i \in S} y_{i|_1} + \sum_{i \in T} y_{i|_0} - y_{\alpha} &\leq& (|S| + |T| -1)y_{\F}.
\end{eqnarray}
\item[(v)]
For each $\a \in \A'$ of the form $S|_1 \cap T|_0 \cap U|_{< r}$, impose the inequalities
\begin{eqnarray}
\label{Ck44} y_{i|_1} &\geq& y_{\a}, \quad \forall i \in S; \\
\label{Ck45} y_{i|_0} &\geq& y_{\a}, \quad \forall i \in T; \\
\label{Ck46} \sum_{i \in U} y_{i|_0} &\geq& (|U| - (r-1)) y_{\alpha};\\
\label{sumwall2} y_{\a} &=& y_{S|_1 \cap T|_0} - \sum_{U' \subseteq U, |U'| \geq r} y_{(S \cup U')|_1 \cap (T \cup (U \sm U'))|_0}.
\end{eqnarray}
\end{itemize}
\item[($\BZ'3$)]
For all $\a, \b \in \A'$ such that $\conv(\a) \cap \conv(\b) \cap P = \es$, $Y[\a,\b] = 0$.
\item[($\BZ'4$)]
For all $\a_1,\b_1, \a_2,\b_2 \in \A'$ such that $\a_1 \cap \b_1 = \a_2 \cap \b_2$, $Y[\a_1, \b_1] = Y[\a_2, \b_2]$.
\end{itemize}
\item
Define
\[
\BZ'^k(P) := \set{x \in \mathbb{R}^n: \exists Y \in \tilde{\BZ}'^k(P),  \het{x}(Ye_{\F})= \het{x}}.
\]
\end{enumerate}

Similar to the case of $\SA^k$, $\BZ'^k$ can be seen as creating columns that correspond to sets that partition $\F$. While $\SA^k$ only generates a partition for each subset of up to $k$ indices, $\BZ'^k$ does so for every tier, which is a much broader collection of indices. For a tier $S$ up to size $k$, it does the same as $\SA^k$ and generates $2^{|S|}$ columns corresponding to all possible complementations of indices in $S$. However, for $S$ of size greater than $k$, it generates a column for $(S \sm T)|_1 \cap T|_0$ for each $T \subseteq S$ of size up to $k$, and a column for $S|_{< |S| - k}$. This can be seen intuitively as a ``$k$-deep'' partition of $\F$ corresponding to $S$ --- sets that can be obtained from starting with $S|_1$ and complementing no more than $k$ entries in $S$ are each represented by a matrix column in the lifted space, while all other sets that are more than $k$ complementations away from $S|_1$ is represented by a single column in the matrix. For example, suppose $\BZ'^1$ is applied to a polytope and $S = \set{1,2,3}$ is a tier. Then the algorithm would generate columns corresponding to the sets
\[
\set{1,2,3}|_1,~~\set{2,3}|_1 \cap \set{1}|_0,~~\set{1,3}|_1 \cap \set{2}|_0,~~\set{1,2}|_1 \cap \set{3}|_0,~~\set{1,2,3}|_{<3}.
\]
Note that the five sets given above partition $\F$. In fact, given a tier $S$ and $T \subseteq S$ such that $|T| < k$, $\BZ'^k$ also generates a $(k - |T|)$-deep partition of this set for each $U \subseteq S \sm T$ such that $|U| + |T| > k$. First, the column for
\[
(S \sm (T \cup U'))|_1 \cap (T \cup U')|_0 
\]
is generated for all $U' \subseteq U$ of size $\leq k - |T|$ (i.e. if the set is at no more than $k- |T|$ complementations away from $(S \sm (T \cup U))|_1 \cap T|_0$). Then $\BZ'^k$ also generates
\[
(S \sm (T \cup U))|_1 \cap T|_0 \cap U|_{< |U|- (k - |T|)}
\]
to capture the remainder of the partition. 

Since each singleton index set is a wall, we see that every index set of size up to $k$ is a tier. Thus, $\A'$ contains $\A_{k}$, and it is not hard to see that $\BZ'^{k}(P) \subseteq \SA^{k}(\O_k(P))$ in general. ($\BZ'^k$ also dominates $\SA'^k$, a stronger version of $\SA^k$ that will be defined after the next theorem.) Furthermore, notice that in $\BZ'$, we have generated exponentially many variables, whereas in the original $\BZ$ only polynomially many are selected. The role of walls is also much more important in selecting the variables in $\BZ$, which we have intentionally suppressed in $\BZ'$ to make our presentation and analysis more transparent. Most of our lower-bound results are established on the stronger operator $\BZ'$, which implies that similar lower-bound results hold for all operators dominated by $\BZ'$, such as $\BZ$ and $\SA$. Some of the details of the relationships between these modified operators and the original Bienstock--Zuckerberg operators are given in the Appendix.

While Bienstock and Zuckerberg's original definition of $\BZ^k$ accepts any polytope as input, they showed that their operator works particularly well on certain instances of set covering problems. One of their main results is the following: Given an inequality $a^{\top}x \geq a_0$ such that $a \geq 0$ and $a_0 >0$, its \emph{pitch} is defined to be the smallest positive integer $j$ such that
\[
S \subseteq \tn{supp}(a), |S| \geq j \Rightarrow \sum_{i \in S} a_i \geq a_0.
\]
Let $\bar{e}$ denote the all-ones vector of suitable size. Then Bienstock and Zuckerberg showed the following powerful result:

\begin{tm}[Bienstock and Zuckerberg~\cite{BienstockZ04a}]\label{BZpitchk}
Suppose $P := \set{x \in [0,1]^n : Ax \geq \bar{e}}$ where $A$ is a $0,1$ matrix. Then for every $k\geq 1$, every valid inequality of $P_I$ that has pitch at most $k+1$ is valid for $\BZ^k(P)$.
\end{tm}

Note that if all coefficients of an inequality are integral and at most $k$, then the pitch of the inequality is no more than $k$.

One major distinction between the Bienstock--Zuckerberg operators and the earlier ones is that they may generate different variables for different input set $P$. In fact, the performance of $\BZ$ can vary upon different algebraic descriptions of the given set $P$, even if they geometrically describe the same set. For instance, adding a redundant inequality to the system $Ax \leq b$ could make many more sets qualify as $k$-small obstructions. This could increase the dimension of the lifted set as more walls and tiers are generated, and as a result possibly strengthen the operator. We provide examples that illustrate this phenomenon in the Appendix.

Next, we take a closer look into the condition $(\BZ' 3)$, which is one of the conditions used in the Bienstock--Zuckerberg operators that were not explicitly imposed by the earlier lift-and-project operators. Observe that, for
every $x \in P \cap \set{0,1}^n$,
\[
Y_{\A}^x[\a,\b] = x^{\A}_{\a} x^{\A}_{\b} = 0
\]
whenever $\a \cap \b \cap P = \es$. Thus, imposing $Y[\a, \b] = 0$ whenever $\conv(\a) \cap \conv(\b) \cap P = \es$ still preserves all matrices in the lifted space which correspond to integral points in $P$. Also, note that this condition can be efficiently checked for the variables that may be selected in $\BZ'$. For instance, for $\a = S|_1 \cap T|_0 \cap U|_{< r}$,
\[
\conv(\a) = \set{ x \in [0,1]^n : x_i = 1, \forall i \in S, x_i = 0, \forall i \in T, \sum_{i \in U} x_i \leq r-1}.
\]
Thus, checking if $x \in \conv(\a) \cap \conv(\b) \cap P$ for any specific pair of $\a,\b$ amounts to verifying if $x$ satisfies $O(n)$ linear equations and inequalities (in addition to verifying
membership in $P$), which is tractable.

Since we will relate the performance of $\BZ'$ and $\BZ'_+$ to other operators (such as $\SA$), it is worthwhile to investigate how this new condition impacts the overall strength of an operator. Given $P \subseteq [0,1]^n$, and integer $k \geq 1$, define
\[
\SA'^{k}(P) := \set{x \in \mathbb{R}^n: \exists Y \in \tilde{\SA}'^{k}(P):  Y e_{\F}= \het{x}},
\]
where $\tilde{\SA}'^{k}(P)$ is the set of matrices in $\tilde{\SA}^k(P)$ that satisfy
\begin{itemize}
\item[($\SA' 4$)]
For all $\a \in \A^+_1,  \b \in \A_k$ such that $\conv(\a) \cap \conv(\b) \cap P= \es$, $Y[\a,\b] = 0$.
\end{itemize}

Note that $\SA'^k$ yields a tractable algorithm when $k = O(1)$, since the condition ($\SA' 4$) --- as with ($\BZ' 3$), as explained above --- can be verified efficiently (assuming $P$ is tractable), and is only checked polynomially many times. Also, since $(\SA' 4)$ is more restrictive than $(\SA 4)$, it is apparent that $\SA'^{k}(P) \subseteq \SA^k(P)$ for every set $P \subseteq [0,1]^n$. However, it turns out that in the case of $\SA$, this extra condition would ``save'' at most one iteration.

\begin{prop}\label{SASA'}
For every $P \subseteq [0,1]^n$ and every $k\geq 1$,
\[
\SA^{k+1}(P) \subseteq \SA'^{k}(P).
\]
\end{prop}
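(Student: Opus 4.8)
The plan is to take a matrix $Y \in \tilde{\SA}^{k+1}(P)$ and produce from it a matrix $Z \in \tilde{\SA}'^{k}(P)$ with $Ze_{\F} = Ye_{\F}$; this immediately gives $\SA^{k+1}(P) \subseteq \SA'^{k}(P)$. The natural candidate for $Z$ is the principal submatrix of $Y$ obtained by restricting the column index set from $\A_{k+1}$ down to $\A_k$ (the row index set $\A_1^+$ stays the same, and $\F, i|_1 \in \A_k$ for all $i$, so the projection $Ze_{\F} = \het{x}$ is unchanged). All of the conditions $(\SA 1), (\SA 2), (\SA 3)$ for $\SA^k$ and $(\SA 5)$ are clearly inherited by the submatrix: $(\SA 1)$ is about the $[\F,\F]$ entry, $(\SA 2)$ is a condition on individual columns indexed by $\A_k \subseteq \A_{k+1}$, $(\SA 3)$ involves columns indexed by $\A_{k-1} \subseteq \A_k$ together with the refinements into $j|_1, j|_0$ which live in $\A_k \subseteq \A_{k+1}$ (so the needed equation already holds in $Y$), and $(\SA 5)$ is just a subset of the identifications already enforced in $Y$. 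So the only thing to verify is the strengthened zero condition $(\SA' 4)$.

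The key step is therefore: for $\a \in \A_1^+$ and $\b = S|_1 \cap T|_0 \in \A_k$ with $\conv(\a) \cap \conv(\b) \cap P = \es$, show that the entry $Y[\a,\b]$ is forced to be $0$ by the $\SA^{k+1}$ conditions. Write $\a = \{i\}|_1$ (the case $\a = \F$ forces $Y[\F,\b] = Ye_\b$ evaluated at coordinate $0$, which I will handle by the same argument applied to the whole column). If $i \in S$ then $\a \cap \b = \b$ and by $(\SA 5)$ $Y[\a,\b] = Y[\F,\b]$, so it suffices to argue the $\F$-row case; if $i \in T$ then $\a \cap \b = \es$ and $(\SA 4)$ already gives $0$; so the interesting case is $i \notin S \cup T$, where $\a \cap \b = (S\cup\{i\})|_1 \cap T|_0 \in \A_{k+1}$, an element we have available precisely because we started one level higher. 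Now the geometric hypothesis $\conv(\a)\cap\conv(\b)\cap P = \es$ says exactly that $P$ has no point with $x_j = 1\ (j\in S)$, $x_j = 0\ (j\in T)$, $x_i = 1$; since $Ye_{(S\cup\{i\})|_1 \cap T|_0} \in K(P)$ by $(\SA 2)$, that column is $\lambda\het{z}$ for some $\lambda \ge 0$ and $z \in P$ (or $\lambda = 0$), and its support pattern on the coordinates $j|_1\ (j \in S\cup\{i\})$ and $j|_0\ (j\in T)$, read off via $(\SA 3)$ and $(\SA 5)$, would otherwise exhibit such a forbidden point of $P$ — forcing $\lambda = 0$, hence the whole column, hence the entry $Y[\a,\b]$, to vanish. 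Unwinding $(\SA 5)$ then transfers this to $Y[\a',\b']$ for every $\a' \in \A_1^+, \b' \in \A_k$ with $\a'\cap\b' = \a\cap\b$, which is what $(\SA' 4)$ demands.

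The main obstacle I anticipate is the bookkeeping in this last step: making precise that ``the column indexed by $\a\cap\b$ lying in $K(P)$ and having the right entries equal to its top coordinate'' genuinely rules out $\conv(\a)\cap\conv(\b)\cap P \ne \es$ being empty while the entry is nonzero — i.e.\ correctly chasing $(\SA 3)$/$(\SA 5)$ to see that a nonzero value at $Y[\a,\b]$ would, after normalizing the column by its $\F$-entry, produce an actual point of $P$ in $\conv(\a)\cap\conv(\b)$, contradicting the hypothesis. One must also be a little careful that $(\SA' 4)$ is stated for $\conv(\a)\cap\conv(\b)\cap P$ rather than $\a\cap\b\cap P$, but for the sets $\a \in \A_1^+$, $\b \in \A_k$ arising here, $\conv(\a)$ and $\conv(\b)$ are exactly the faces of $[0,1]^n$ cut out by the defining equalities, so $\conv(\a)\cap\conv(\b)\cap P = \es$ is equivalent to the linear system ``$x \in P$, $x_j = 1\ (j \in S\cup\{i\})$, $x_j = 0\ (j\in T)$'' being infeasible, which is precisely the infeasibility that kills the column. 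The ``at most one iteration'' sharpness claim in the surrounding text is not part of the statement, so I would not address it in the proof.
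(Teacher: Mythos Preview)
Your proposal is correct and follows essentially the same approach as the paper: restrict $Y \in \tilde{\SA}^{k+1}(P)$ to the columns indexed by $\A_k$, observe that $(\SA 1)$--$(\SA 5)$ are inherited, and verify $(\SA'4)$ by noting that for $\a = i|_1$, $\b = S|_1 \cap T|_0$ the set $\a\cap\b$ lies in $\A_{k+1}$, so the column $Ye_{\a\cap\b} \in K(P)$ is available; a nonzero top entry would (via $(\SA 4)$ and $(\SA 5)$) produce a point of $P$ in $\conv(\a)\cap\conv(\b)$. Your case split on $i \in S$, $i \in T$, $i \notin S\cup T$ is slightly more explicit than the paper's write-up, but the argument is the same.
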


\begin{proof}
Let $x \in \SA^{k+1}(P)$, and let $Y \in \tilde{\SA}^{k+1}(P)$ such that $Y e_{\F} = \het{x}$. Define $Y' \in \mathbb{R}^{\A_1^+ \times \A_k}$ such that $Y'[\a,\b] = Y[\a,\b],~\forall \a \in \A_1^+, \b \in \A_k$ (i.e., $Y'$ is a submatrix of $Y$). Since $Y' e_{\F} = Ye_{\F} = \het{x}$, it suffices to show that $Y' \in \tilde{\SA}'^{k}(P)$.

By construction, it is obvious that $Y' \in \tilde{\SA}^k(P)$. Thus, we just need to show that $Y'$ satisfies $(\SA' 4)$. Given $\a \in \A_1^+, \b \in \A_k$, suppose $\a = i|_1$, and $\b = S|_1 \cap T|_0$ for $S,T \subseteq [n]$. Now $\a \cap \b = (S \cup \set{i})|_1  \cap T|_0 \in \A_{k+1}$, and thus the entry $Y[\F, \a \cap \b]$ exists.

Since $Y e_{\a \cap \b} \in K(P)$ by ($\SA 2$), $Y[\F, \a \cap \b]  > 0$ would imply that the point
\[
y := \frac{1}{Y[\F, \a \cap \b]} (Y[1|_1, \a \cap \b], Y[2|_1, \a \cap \b], \ldots, Y[n|_1, \a \cap \b])^{\top}
\]
is in $P$. By ($\SA 5$), we know that $Y[j|_1, \a \cap \b] = Y[\F, \a \cap \b]$ if $j \in S \cup \set{i}$, and by ($\SA 3$), we have $Y[j|_1, \a \cap \b] = 0$ if $j \in T$. Thus, it follows that $y$ belongs to $\conv(\a)$ and $\conv(\b)$. Therefore,  $(\SA' 4)$ holds as $\conv(\a) \cap \conv(\b) \cap P \neq \es$, and our claim follows.
\end{proof}

\autoref{SASA'} establishes the dashed arrow from $\SA'$ to $\SA$ in \autoref{fig2}, and assures that if one can provide a performance guarantee for $\SA'$ on a polytope $P$, then the same can be said of the \emph{weaker} $\SA$ operator by using one extra iteration. The meanings for the other four dashed arrows in \autoref{fig2} are similar in nature  --- for some linear or quadratic function of the iterate number, the weaker operator can be at least as strong as the stronger operator. However, they are much more involved than \autoref{SASA'}, and sometimes depend on the properties of the given set $P$. We will address them in detail in the subsequent sections.

\section{Identifying Unhelpful Variables in the Lifted Space}

As we have seen in the previous section, one way to gain additional strength in devising a lift-and-project operator is to lift to a space of higher dimension, and obtain a potentially tighter formulation by using more variables
(and new constraints), albeit at a computational cost. In this section, we provide conditions on sets and higher dimensional liftings which do not lead to strong cuts. As a result, we show in some cases, $\BZ'^k$ performs no better than $\SA'^{\l}$ for some suitably chosen pair $k$ and $\l$.

\subsection{A General Template}

\ignore{We first establish a few useful definitions.} Recall that $\F = \set{0,1}^n$, and $\A$ is the power set of $\F$. A common theme among all lift-and-project operators we have looked at so far is that their lifted spaces can all be interpreted as sets of matrices whose columns and rows are indexed by elements in $\A$.
Moreover, they all impose a constraint in the tune of ``each column of the matrix belongs to a certain set linked to $P$'' (e.g. conditions ($\SA 2$) and ($\BZ' 2$)). This provides a natural way of partitioning the constraints of a lift-and-project operator into two categories: those that are present (and identical) for every matrix column, and the remaining constraints that cannot be captured this way.

Let $\Gamma$ be a lift-and-project operator which lifts a given set $P$ to $\tilde{\Gamma}(P)$, and then projects it back onto the space where $P$ lives,
resulting in the output relaxation
$\Gamma(P)$. We say that $\Gamma$ is \emph{admissible} if it possesses all of the following properties:

\begin{itemize}
\item[(I1)]
Given a convex set $P \subseteq [0,1]^n$, $\Gamma$ lifts $P$ to a set of matrices $\tilde{\Gamma}(P) \subseteq \mathbb{R}^{\S \times \S'}$, such that
\[
\A_1^+ \subseteq \S \subseteq \S' \subseteq \A.
\]
\item[(I2)]
There exist a \emph{column constraint function} $f$ that maps elements in $\A$ to subsets of $\mathbb{R}^{\S}$, and a \emph{cross-column constraint function} $g$ that maps sets contained in $[0,1]^n$ to sets of matrices in $\mathbb{R}^{\S \times \S'}$, such that
\[
\tilde{\Gamma}(P) = \set{Y \in g(P) : Y e_{S'} \in f(S'),~\forall S' \in \S'}.
\]
Furthermore, $f$ has the property that, for every pair of disjoint sets $S, T \in \S'$:
\begin{enumerate}
\item
$f(S) \cup f(T) \subseteq f(S \cup T)$;
\item
$f(S) = f(T)$ if $S \cap P = T \cap P$.
\end{enumerate}
\item[(I3)]
\[
\Gamma(P) := \set{x \in \mathbb{R}^n : \exists Y \in \tilde{\Gamma}(P), Y[\F,\F]=1, \het{x}(Y e_{\F}) = \het{x}}.
\]
\end{itemize}

Loosely speaking, an admissible operator returns a relaxation $\Gamma(P)$ that is a projection of some set of matrices $\tilde{\Gamma}(P)$ whose rows and columns are indexed by entries in $\A$, with some structures that are captured by the functions $f$ and $g$. As we will see in subsequent results, the intention of the definition is to try to capture as much of $\Gamma$ as possible with $f$ by using it to describe the constraints $\Gamma$ places on every column of the matrices in the lifted space, and only include the remaining constraints in $g$. Thus, we want $f$ to be maximal, and $g$ to be minimal in this sense. For instance, we can show that $\SA^k$ is admissible by defining $f(S) := K(P \cap \tn{conv}(\a)),~\forall \a \in \A$ and $g(P)$ to be the set of matrices in $\mathbb{R}^{\A_1^+ \times \A_k}$ that satisfy ($\SA 3$), ($\SA 4$) and ($\SA 5$). All named operators mentioned in this manuscript can be shown to be admissible in this fashion --- using $f$ to describe that each matrix column has to be in some lifted set determined by $P$, and letting $g$ capture the remaining constraints. On the other hand, for any lift-and-project operator $\Gamma$ that satisfies (I1), we can show that it is admissible by letting $g(P) := \tilde{\Gamma}(P)$ and $f(\a) := \mathbb{R}^{\S}$ for all $\a \in \A$ (i.e., we define $f$ to be trivial and ``shove'' all constraints of $\Gamma$ under $g$). Thus, the notion of admissible operators is extremely broad, and the framework that we present here might also be applicable to the analyses of future lift-and-project operators that are drastically different from the existing ones.

For many known operators, these ``other'' constraints placed by $g$ are relaxations of the set theoretical properties (P5) and (P6) of $Y_{\A}^x$. For instance, ($\SA 5$) is in place to make sure the variables in the linearized polynomial inequalities that would be identified in the original description of $\SA^k$ would in fact have the same value in all matrices in $\tilde{\SA}^k(P)$. Likewise, ($\SA 3$) and ($\SA 4$) are also needed to capture the relationship between the variables that would be established naturally in the original description with polynomial inequalities.

Furthermore, sometimes using matrices to describe the lifted space and assigning set theoretical meanings to their columns and rows has advantages over using linearized polynomial inequalities directly. For instance, we again consider the set
\[
P := \set{ x \in [0,1]^n : \sum_{i=1}^n x_i \leq  n- \frac{1}{2}}.
\]
We have seen that if we define
\[
Q_j := \set{ x \in P : \sum_{i=1}^n x_i = j},
\]
for every $j \in \set{0,1,\ldots, n}$, then $P_I = \tn{conv} \left(\bigcup_{j=0}^n Q_j \right)$. However, if we attempt to construct a formulation by linearizing polynomial inequalities as in the original description of $\SA$, then to capture the constraints for $Q_j$ one would need to linearize
\[
\sum_{\substack{S,T : S\cup T = [n],\\ S \cap T = \es , |S| = j}} \left( \prod_{i\in S} x_i\right) \left( \prod_{i \in T} (1-x_i) \right) \left( \sum_{i=1}^n a_ix_i \right) \leq
\sum_{\substack{S,T : S\cup T = [n],\\ S \cap T = \es , |S| = j}}\left( \prod_{i\in S} x_i\right) \left( \prod_{i \in T} (1-x_i) \right) a_0
\]
for all inequalities $\sum_{i=1}^n a_ix_i \leq a_0$ that are valid for $P$. Of course, when $j \approx \frac{n}{2}$, the above constraint would have exponentially many terms.

However, we can obtain an efficient lifted formulation by doing the following: for each $j \in \set{0,1,\ldots,n}$, define $R_j \in \A$ where
\[
R_j = \set{ x \in \F: \sum_{i=1}^n x_i = j},
\]
and let $\S = \set{\F, R_0, R_1, \ldots, R_n}$. We now define $\Gamma$ to be the lift-and-project operator as follows:
\begin{enumerate}
\item
Given $P \subseteq [0,1]^n$, let $\tilde{\Gamma}(P)$ denote the set of matrices $Y \in \mathbb{R}^{\A_1^+ \times \S}$ such that
\begin{itemize}
\item[(i)]
$Y[\F,\F] = 1$.
\item[(ii)]
$Ye_{R_j} \in K(P \cap \tn{conv}(R_j)),~\forall j \in \set{0, \ldots, n}$.
\item[(iii)]
$Y e_{\F} = \sum_{j=0}^n Y e_{R_j}$.
\end{itemize}
\item
Define
\[
\Gamma(P) := \set{x \in \mathbb{R}^n: \exists Y \in \tilde{\Gamma}(P),  Y e_{\F}= \het{x}}.
\]
\end{enumerate}

Then it is not hard to see that $\Gamma(P) = \tn{conv}\left(\bigcup_{i=0}^n Q_j \right)$ for every set $P \subseteq [0,1]^n$. Note that we used constraint (iii) to enforce that the entries in the matrix behave consistently with their corresponding set theoretical meanings --- since $R_0, \ldots, R_n$ partition $\F$, we require that the columns indexed by the sets $R_0,\ldots, R_n$ sum up to that representing $\F$.

Thus, the following notions are helpful when we attempt to analyze cross-column constraint functions $g$ more systematically. First, given $\S, \S' \subseteq \A$, we say that $\S'$ \emph{refines} $\S$ if for all $S \in \S$, there exist mutually disjoint sets in $\S'$ that partition $S$. Equivalently, given $\S \subseteq \A$, let $Y_{\S}^x$ denote the $\A \times \S$ submatrix of $Y_{\A}^x$ consisting of the columns indexed by sets in $\S$. Then $\S'$ refines $\S$ if and only if every column $Y_{\S}^x$ is contained in the cone generated by the column vectors of $Y_{\S'}^x$, for every $x \in \F$. For instance, $\S'$ refines $\S$ whenever $\S \subseteq \S'$ (and thus $\A_{k}$ refines $\A_{k}^+$ for all $k \geq 0$, and $\A_{k}$ refines $\A_{\l}$ whenever $k \geq \l$). Note that the notion of refinement is transitive --- if $\S''$ refines $\S'$ and $\S'$ refines $\S$, then $\S''$ refines $\S$.

Next, given $Y_1 \in \mathbb{R}^{\S_1 \times \S'_1}$ and $Y_2 \in \mathbb{R}^{\S_2 \times \S'_2}$ where $\S_1,\S'_1, \S_2, \S'_2 \subseteq \A$, we say that $Y_1$ and $Y_2$ are \emph{consistent} if, given collections of mutually disjoint sets $\set{ S_{1i} \cap S'_{1i} : i \in [k]}$ and $\set{ S_{2i} \cap S'_{2i} : i \in [\l]}$,
\[
\bigcup_{i=1}^k \left( S_{1i} \cap S'_{1i} \right) = \bigcup_{i=1}^{\l} \left( S_{2i} \cap S'_{2i} \right) \Rightarrow \sum_{i=1}^k Y_1[S_{1i}, S'_{1i}] = \sum_{i=1}^{\l} Y_2[S_{2i}, S'_{2i}].
\]
Also, given a vector $y \in \mathbb{R}^{\S}$ where $\S \subseteq \A$, we can think of it as a $|\S|$-by-$1$
matrix whose single column is indexed by $\F$. Then we can extend the above notion to define
whether two vectors are consistent with each other, and whether a matrix and a vector are consistent
with each other.  For example, consider
\[
Y := \begin{pmatrix}
Y[\F, \F] & Y[\F, 1|_1] & Y[\F, 2|_1] \\
Y[1|_1, \F]& Y[1|_1, 1|_1] & Y[ 1|_1, 2|_1] \\
Y[ 2|_1, \F] & Y[2|_1, 1|_1] & Y[2|_1, 2|_1] 
\end{pmatrix} =
\begin{pmatrix}
1 & 0.7 & 0.4 \\ 
0.7 & 0.7 & 0.2 \\
0.4 & 0.2 & 0.4 
\end{pmatrix},
\]
and
\[
y := (y[\set{1,2}|_1], y[1|_1 \cap 2|_0], y[2|_1 \cap 1|_0], y[\set{1,2}|_0])^{\top} = (0.2,0.5,0.2,0.1)^{\top}.
\]
Then $Y$ is consistent with $y$. For example, notice that
\[
(1|_1 \cap  2|_0)  \cup (\set{1,2}|_1 ) = 1|_1.
\]
Accordingly, the corresponding entries in $Y$ and $y$ satisfy
\[
y[1|_1 \cap 2|_0] + y[ \set{1,2}|_1] = Y[\F, 1|_1] = 0.7.
\]
We remark that our notion of consistency is closely related to some similar notions used by Zuckerberg~\cite{Zuckerberg03a}.

Next, we say that a matrix $Y \in \mathbb{R}^{\S \times \S'}$, where $\S, \S' \subseteq \A$, is \emph{overall measure consistent} (OMC) if it is consistent with itself. All matrices in the lifted spaces of $\SA^k, \SA'^k$ and $\BZ'^k$ satisfy (OMC), for all $k \geq 1$.  For instance, a matrix $Y$ in $\tilde{\SA}^1(P)$ where $P \subseteq [0,1]^n$ takes the form
\[
Y = \begin{pmatrix}
Y[\F, \F] & Y[\F, 1|_1] & \cdots & Y[\F, n|_1] & Y[\F, 1|_0] & \cdots & Y[\F, n|_0]  \\
Y[ 1|_1, \F] & Y[ 1|_1, 1|_1] & \cdots &  Y[ 1|_1, n|_1]  & Y[ 1|_1, 1|_0] & \cdots &  Y[ 1|_1, n|_0]  \\
\vdots & \vdots & \ddots & \vdots & \vdots & \ddots & \vdots \\
Y[ n|_1, \F] & Y[n|_1, 1|_1] & \cdots & Y[n|_1, n|_1] & Y[n|_1, 1|_0] & \cdots & Y[n|_1, n|_0]
\end{pmatrix}.
\]
Then ($\SA 3$) enforces consistencies such as
\[
Y[i|_1, \F] = Y[i|_1, j|_1] + Y[i|_1, j|_0]
\]
for all indices $i,j$, while equations such as
\[
Y[\F, i|_1] = Y[i|_1, i|_1] + Y[i|_1, i|_0]
\]
follow from ($\SA 4$) (which enforces $Y[i|_1,i|_0] = 0$ as $i|_1 \cap i|_0 = \es$) and ($\SA 5$) (which enforces $Y[\F, i|_1] = Y[i|_1, i|_1]$ as $\F \cap i|_1 = i|_1 \cap i|_1 = i|_1$). One notable observation is the following: Suppose $x \in \mathbb{R}^{\S'}$ satisfies (OMC) and $\S'$ refines $\S$. Now for every $\a \in \S$, define $I_{\a} \subseteq \S'$ to be a collection of disjoint sets in $\S'$ that partitions $\a$.  Then, if we define $y \in \mathbb{R}^{\S}$ where
\[
y[\a] := \sum_{\b \in I_{\a}} x[\b]
\]
for every $\a \in \S$, then $y$ is the unique vector in $\mathbb{R}^{\S}$ that is consistent with $x$.

Finally, we are ready to formally describe some variables that we will show are unhelpful in the lifted space under this framework. Given an admissible operator $\Gamma$ and $P \subseteq [0,1]^n$, suppose $\tilde{\Gamma}(P) \subseteq \mathbb{R}^{\S \times \S'}$. If $\T = \set{T_1, \ldots, T_k} \subseteq \S'$ is a collection of sets where
\begin{enumerate}
\item
the set $\bigcup_{i=1}^k T_i$ is itself an element in $\left( \S' \sm \T \right)$; and
\item
there exists a unique $\l \in [k]$ such that $P \cap \tn{conv}(T_j) \neq \es$.
\end{enumerate}
Then we say that the sets $T_1, \ldots, T_k$ are \emph{$P$-useless}.

What does it mean for variables to be $P$-useless? For example, consider $\SA^2$ applied to a set $P \subseteq [0,1]^3$ in which no point satisfies
$x_3=1$. Then let $\T = \set{T_1, T_2} = \set{ \set{1,3}|_1, 1|_1 \cap 3|_0}$. Now consider any matrix $Y \in \tilde{\SA}^2(P)$. Since $P \cap T_1 = \es$, $Y e_{T_1} \in K(P)$ (enforced by ($\SA 2$)) implies that the entire column of $Y$ indexed by $T_1$ is zero. Next, let $R :=  T_1 \cup T_2 = 1|_1$, which is itself a variable generated by $\SA^2$. By ($\SA 3$), we know that $Ye_{T_1} + Ye_{T_2} = Ye_{R}$. Since we just argued that $Ye_{T_1}$ is the zero vector, we obtain that $Ye_{T_2} = Ye_{R}$ for all matrices $Y \in \tilde{\SA}^2(P)$. Since the column for $T_1$ is uniformly zero, and the column $T_2$ is redundant (it is identical to
the column for $R$), we can deem the variables $T_1, T_2$ $P$-useless, and not generate their columns when computing $\SA^2(P)$.

Geometrically, useless variables correspond to unfruitful partitions in the convexification process. Recall the idea that, given $P$ and $Q_i$'s are disjoint subsets of $P$ whose union contains all integral points in $P$, then the convex hull of these $Q_i$'s give a potentially tighter relaxation of $P_I$ than $P$. Now, if we have a set of indices $\T$ where the subcollection $\set{Q_i : i \in \T}$ has exactly one nonempty set, then we can replace that subcollection of $Q_i$'s by the single set $\bigcup_{i \in \T} Q_i$, and be assured that the convex hull of the reduced collection of $Q_i$'s would be the same as that of the original collection.

\ignore{ 
For example, let $T = \set{T_1,\ldots, T_k} \subseteq \S'$ be a collection of variables such that $R := \bigcup_{i=1}^k T_i$ is itself a variable in $\S'$, and $R \not\in T$. Further suppose there exists a unique $\l \in [k]$ such that $P \cap \tn{conv}(T_j) = \es,~\forall j \neq \l$. This means that in this setting, each of the $T_j$ has the set theoretical meaning of the empty set. Thus, we do not lose any points when projecting $\tilde{\Gamma}(P)$ to $\Gamma(P)$ if we assume that the matrix column indexed by $T_j$ is the vector of all zeros.  Moreover, since $R = \bigcup_{i=1}^k T_i$, the variables $R$ and $T_{\l}$ can be interpreted as having the same set theoretical meaning in the formulation, and we can deem $T_{\l}$ redundant. Therefore, in this case, $T_1,\ldots, T_k$ are all $P$-useless.}

With the notion of $P$-useless variables, we can show the following:

\begin{prop}\label{OMCuseless}
Let $\Gamma_1, \Gamma_2$ be two admissible lift-and-project operators, $P \subseteq [0,1]^n$, and suppose $\tilde{\Gamma}_1(P) \subseteq \mathbb{R}^{\S_1 \times \S'_1}$ and $\tilde{\Gamma}_2(P) \subseteq \mathbb{R}^{\S_2 \times \S'_2}$. Also, let $f_1,g_1$ and $f_2, g_2$ be the corresponding constraint functions of $\Gamma_1$ and $\Gamma_2$ respectively, and let $U$ be a set of $P$-useless variables in $\S'_2$.  Further suppose that
the following conditions hold:

\begin{itemize}
\item[(i)]
Every matrix in $\tilde{\Gamma}_1(P)$ satisfies (OMC).
\item[(ii)]
$\set{S \cap S' : S \in \S_1, S' \in \S'_1}$ refines $\set{S \cap S' : S \in \S_2 \setminus U, S' \in \S'_2 \setminus U}$, and $\S'_1$ refines \\$\S'_2 \setminus U$.
\item[(iii)]
Let $Y \in \tilde{\Gamma}_1(P)$, and $S \in \S'_2$. If $y \in \mathbb{R}^{\S_2 \times \set{S}}$ is consistent with $Y$, then $y \in f_2(S)$.
\item[(iv)]
If $Y_1 \in g_1(P)$ and $Y_2 \in \mathbb{R}^{\S_2 \times \S'_2}$ is consistent with $Y_1$, then $Y_2 \in g_2(P)$.
\end{itemize}
Then, $\Gamma_1(P) \subseteq \Gamma_2(P)$.
\end{prop}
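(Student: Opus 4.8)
The plan is to start from an arbitrary point $x \in \Gamma_1(P)$, with a certifying matrix $Y_1 \in \tilde{\Gamma}_1(P)$ satisfying $Y_1[\F,\F]=1$ and $\het{x}(Y_1 e_\F) = \het{x}$, and to construct from it a matrix $Y_2 \in \tilde{\Gamma}_2(P)$ that certifies $x \in \Gamma_2(P)$. The construction of $Y_2$ proceeds in two stages. First, I use hypothesis (i) together with (ii): since $Y_1$ is (OMC) and the index set $\{S\cap S' : S\in\S_1,\, S'\in\S'_1\}$ refines $\{S\cap S' : S\in\S_2\setminus U,\, S'\in\S'_2\setminus U\}$, the remark following the definition of consistency (``if $x\in\mathbb{R}^{\S'}$ satisfies (OMC) and $\S'$ refines $\S$ then there is a unique $y\in\mathbb{R}^{\S}$ consistent with it'') guarantees that there is a well-defined matrix $Y_2'$ indexed by $(\S_2\setminus U)\times(\S'_2\setminus U)$ that is consistent with $Y_1$: each entry $Y_2'[S,S']$ is set equal to the common value $\sum_i Y_1[S_{1i},S'_{1i}]$ over any partition of $S\cap S'$ by intersections from $\S_1\times\S'_1$, and refinement plus (OMC) makes this unambiguous. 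Second, I extend $Y_2'$ to a full matrix $Y_2 \in \mathbb{R}^{\S_2\times\S'_2}$ by filling in the rows and columns indexed by the $P$-useless set $U$. This is where uselessness is used: for each $S'\in U$, part (2) of the definition gives a collection $\T=\{T_1,\dots,T_k\}$ with $\bigcup T_i \in \S'_2\setminus U$ and all but one $T_j$ having $P\cap\tn{conv}(T_j)=\es$; I set the column of $Y_2$ indexed by the empty-meaning $T_j$'s to zero and the column indexed by $S'$ (the nonempty one, $T_\ell$) equal to the already-defined column of $\bigcup T_i$, and symmetrically for rows — this is the only consistent choice and it keeps $Y_2$ consistent with $Y_1$ (since the useless columns carry no new information). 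I should check this is coherent when $U$ contains several useless variables simultaneously, using that each is witnessed by a collection landing in $\S'_2\setminus U$.

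Once $Y_2$ is built and shown to be consistent with $Y_1$, verifying $Y_2 \in \tilde{\Gamma}_2(P)$ is immediate from the remaining hypotheses. By (I2), $\tilde{\Gamma}_2(P) = \{Y\in g_2(P) : Ye_{S'}\in f_2(S'),\ \forall S'\in\S'_2\}$. For the column constraints: each column $Y_2 e_{S'}$ (viewed as a vector in $\mathbb{R}^{\S_2\times\{S'\}}$) is consistent with $Y_1$, so hypothesis (iii) gives $Y_2 e_{S'}\in f_2(S')$ — and for $S'\in U$ this still holds because the zero columns lie in $f_2$ of an empty-intersecting set (by property 2 of $f$ in (I2), $f$ of such a set equals $f(\es)$, which contains $0 = Y e_\es$ for genuine matrices) and the redundant column equals $Y_2 e_{\bigcup T_i}$, which we already know is in $f_2$; here one uses $f(T_\ell) = f(\bigcup T_i)$ since they have the same intersection with $P$, via part 1 and 2 of the $f$-properties. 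For the cross-column constraints: $Y_1 \in g_1(P)$ and $Y_2$ is consistent with $Y_1$, so hypothesis (iv) yields $Y_2\in g_2(P)$. Hence $Y_2\in\tilde{\Gamma}_2(P)$.

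Finally, I pin down the projection. By construction $Y_2$ is consistent with $Y_1$; in particular, taking the single-entry ``partitions'' $\F = \F\cap\F$ and $i|_1 = \F\cap i|_1 = i|_1\cap i|_1$ on both sides, consistency forces $Y_2[\F,\F] = Y_1[\F,\F] = 1$ and $Y_2[i|_1,\F] = Y_1[i|_1,\F]$ for all $i\in[n]$ (these row/column indices are available since $\A_1^+\subseteq\S_2\cap\S'_2$ by (I1), and one checks the relevant sets are not in $U$ — or if $i|_1\in U$, it is witnessed by a partition that recovers the same value). Therefore $\het{x}(Y_2 e_\F) = \het{x}(Y_1 e_\F) = \het{x}$, and by (I3) we conclude $x\in\Gamma_2(P)$. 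Since $x$ was arbitrary, $\Gamma_1(P)\subseteq\Gamma_2(P)$.

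\textbf{Main obstacle.} The delicate point is the second stage of the construction — handling the $P$-useless set $U$, especially when $|U|>1$. One must check that the ad hoc assignment on useless rows/columns (zero on empty-meaning variables, copy on the redundant one) is globally consistent with $Y_1$ and does not conflict across different useless variables or with the $f$- and $g$-constraints; the witnessing collections $\T$ for different members of $U$ could in principle interact. Making precise that ``a $P$-useless column carries no information beyond $\S'_2\setminus U$'' — and that this is exactly what is needed to invoke (iii) and (iv) — is the crux; the refinement bookkeeping in (ii) and the uniqueness remark for (OMC) vectors are the tools that make it go through.
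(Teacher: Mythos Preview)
Your proposal is correct and matches the paper's approach: build a matrix $Y'$ on $(\S_2\setminus U)\times(\S'_2\setminus U)$ via refinement and (OMC), extend to the useless indices by zeroing columns with $\tn{conv}(\alpha)\cap P=\es$ and copying for the remaining ones, then verify membership in $\tilde{\Gamma}_2(P)$ via (iii) and (iv). The obstacle you flagged is exactly right, and your phrasing ``witnessed by a collection landing in $\S'_2\setminus U$'' is slightly off --- the definition only guarantees $\bigcup T_i\in\S'_2\setminus\T$; the paper resolves this by iterating the uselessness witness (finiteness of $U$ guarantees termination) to produce a map $h(\alpha)\in\S'_2\setminus U$ with $\tn{conv}(\alpha)\cap P=\tn{conv}(h(\alpha))\cap P$, then writing the extension compactly as $Y''=V^1Y'(V^2)^\top$ for $0/1$ matrices $V^1,V^2$ encoding $h$.
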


Intuitively, the above conditions are needed so that given a point $x \in \Gamma_1(P)$ and its certificate matrix $Y \in \tilde{\Gamma}_1(P)$, we know enough structure about the entries and set theoretic meanings of $Y$ to construct a matrix $Y'$ in $\mathbb{R}^{(\S_2 \sm U) \times (\S'_2 \sm U)}$ that is consistent with $Y$. Then using the fact that the variables in $U$ are $P$-useless, we can extend $Y'$ to a matrix $Y''$ in $\mathbb{R}^{\S_2 \times \S'_2}$ that certifies $x$'s membership in $\Gamma_2(P)$. Also, for $y \in \mathbb{R}^{\S_2 \times \set{S}}$, we are referring to a vector with $|\S_2|$ entries that are indexed by elements of $\set{T \cap S: T \in \S_2}$. Since we will be talking about whether $y$ is consistent with another vector or matrix, we will need to specify not only the entries of $y$, but also these entries' corresponding sets.

Now we are ready to prove \autoref{OMCuseless}.

\begin{proof}[Proof of \autoref{OMCuseless}]
Suppose $x \in \Gamma_1(P)$. Let $Y \in \mathbb{R}^{\S_1 \times \S'_1}$ be a matrix in $\tilde{\Gamma}_1(P)$ such that $\het{x}(Ye_{\F}) = \het{x}$.
First, we construct an intermediate matrix $Y' \in \mathbb{R}^{(\S_2 \setminus U) \times (\S'_2 \setminus U)}$. For each $\a \in \S_2 \setminus U$ and $\b \in \S'_2 \setminus U$, we know (due to (ii)) that there exists a set of ordered pairs
\[
I_{\a,\b} \subseteq \set{(S, S') : S \in \S_1, S' \in \S'_1}
\]
such that the collection $\set{ S \cap S' : (S,S') \in I_{\a,\b}}$ partitions $\a \cap \b$. Next, we construct $Y'$ such that
\[
Y'[\a, \b] := \sum_{(S,S') \in I_{\a,\b}} Y[S,S'].
\]
Note that by (OMC), the entry $Y'[\a,\b]$ is invariant under the choice of $I_{\a,\b}$. Also, since $\set{ (\F,\F)}$ is a valid candidate for $I_{\F,\F}$, we see that $Y'[\F, \F] = Y[\F,\F] = 1$, and $\het{x}(Y'e_{\F}) = \het{x}(Ye_{\F}) = \het{x}$.

Next, we construct $Y'' \in \tilde{\Gamma}_2(P)$ from $Y'$. For each $\a \in U$ for which $P \cap \tn{conv}(\a) \neq \es$, we define a set $h(\a) \in \S_2' \setminus U$ such that $\tn{conv}(\a) \cap P = \tn{conv}(h(\a)) \cap P$. This can be done as follows: by the definition of $\a$ being $P$-useless, there must be a collection $\T = \set{T_1, \ldots, T_k, \a} \subseteq U$ where $\conv(T_i) \cap P = \es$ for all $i \in [k]$, and a set $R := \left( \bigcup_{i=1}^k T_i  \right) \cup \a \in \S'_1$ that satisfies $R \not \in \T$ and $\conv(\a) \cap P = \conv(R) \cap P$. If $R \not\in U$, then we can let $h(\a) = R$. Otherwise, since $R$ is itself $P$-useless, we can repeat the argument and find a yet larger set $R'$ where $\conv(R') \cap P = \conv(R) \cap P$. Since $U$ is finite, we can eventually find a set $h(\a) \in \S_2' \setminus U$ that has the desired property. Note that $h(\a)$ may not be unique, but any eligible choice would do.

Next, we define $V^1 \in \mathbb{R}^{(\S_2 \setminus U) \times \S_2}$ as follows:
\[
V^1(e_{\a}) := \left\{
\begin{array}{ll}
e_{\a} & \tn{if $\a \in \S_2 \setminus U$;}\\
e_{h(\a)} & \tn{if $\a \in U$ and $\tn{conv}(\a) \cap P \neq \es$;}\\
0 & \tn{otherwise.}
\end{array}
\right.
\]
Similarly, we define $V^2 \in \mathbb{R}^{(\S'_2 \setminus U) \times \S'_2}$ as follows:
\[
V^2(e_{\a}) := \left\{
\begin{array}{ll}
e_{\a} & \tn{if $\a \in \S'_2 \setminus U$;}\\
e_{h(\a)} & \tn{if $\a \in U$ and $\tn{conv}(\a) \cap P \neq \es$;}\\
0 & \tn{otherwise.}
\end{array}
\right.
\]
We show that $Y'' := V^1 Y' (V^2)^{\top} \in \tilde{\Gamma}_2(P)$. Since our map from $Y$ to $Y''$ preserves (OMC), $Y''$ is consistent with $Y$, and thus by (iv) it satisfies all constraints in $g_2$. Also, by (iii) it satisfies all column constraints in $f_2$ as well. Thus, $Y'' \in \tilde{\Gamma}_2(P)$. Since $\het{x}(Y'' e_{\F}) = \het{x}$, we are finished.
\end{proof}

We note that, in some cases, we can relate the performance of two lift-and-project operators by assuming a condition slightly weaker than (OMC). Given a matrix $Y \in \mathbb{R}^{\S \times \S'}$, where $\S, \S' \subseteq \A$, we say that it is \emph{row and column measure consistent} (RCMC) if every column and row of $Y$ satisfies (OMC). As is apparent in its definition, (RCMC) is less restrictive than (OMC). For example, consider
\[
Y := \begin{pmatrix}
Y[\F, \F] & Y[\F, 1|_1] & Y[\F, 2|_1] & Y[\F, 1|_0] & Y[\F, 2|_0]  \\
Y[1|_1, \F]& Y[1|_1, 1|_1] & Y[ 1|_1, 2|_1] & Y[ 1|_1, 1|_0] & Y[ 1|_1, 2|_0]  \\
Y[ 2|_1, \F] & Y[2|_1, 1|_1] & Y[2|_1, 2|_1] & Y[2|_1, 1|_0] & Y[2|_1, 2|_0]
\end{pmatrix} =
\begin{pmatrix}
1 & 0.7 & 0.4 & 0.3 & 0.6\\
0.7 & 0.7 & 0.2 & 0 & 0.5 \\
0.4 & 0.3 & 0.4 & 0.1 & 0
\end{pmatrix}.
\]
Then $Y$ satisfies (RCMC), but not (OMC) since $Y[1|_1, 2|_1] \neq Y[2|_1, 1|_1 ]$. It is not hard to check that all matrices in the lifted space of all named lift-and-project operators mentioned in this paper satisfy (RCMC). Next, we prove a result that is the (RCMC) counterpart of \autoref{OMCuseless}:

\begin{prop}\label{RCMCuseless}
Let $\Gamma_1, \Gamma_2$ be two admissible lift-and-project operators, $P \subseteq [0,1]^n$, and suppose $\tilde{\Gamma}_1(P) \subseteq \mathbb{R}^{\S_1 \times \S'_1}$ and $\tilde{\Gamma}_2(P) \subseteq \mathbb{R}^{\S_2 \times \S'_2}$. Also, let $f_1,g_1$ and $f_2, g_2$ be the corresponding constraint functions of $\Gamma_1$ and $\Gamma_2$ respectively, and let $U$ be a set of $P$-useless variables in $\S'_2$. Further suppose
that all of the following conditions hold:

\begin{itemize}
\item[(i)]
Every matrix in $\tilde{\Gamma}_1(P)$ satisfies (RCMC).
\item[(ii)]
$\S_1$ refines $\S_2 \setminus U$, and $\S'_1$ refines $\S'_2 \setminus U$.
\item[(iii)]
Let $S \in \S'_2$. If $x \in \mathbb{R}^{\S_1 \times \set{S}}$ is contained in $f_1(S)$ and
$y \in \mathbb{R}^{\S_2 \times \set{S}}$ is consistent with $x$, then $y \in f_2(S)$.
\item[(iv)]
If $Y_1 \in g_1(P)$ and $Y_2 \in \mathbb{R}^{\S_2 \times \S'_2}$ is consistent with $Y_1$, then $Y_2 \in g_2(P)$.
\end{itemize}
Then, $\Gamma_1(P) \subseteq \Gamma_2(P)$.
\end{prop}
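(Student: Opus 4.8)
The statement is the (RCMC) analog of \autoref{OMCuseless}, so the natural strategy is to follow the proof of \autoref{OMCuseless} almost verbatim, substituting the weaker hypotheses where they suffice. Given $x \in \Gamma_1(P)$ with certificate $Y \in \tilde{\Gamma}_1(P)$, I would first build an intermediate matrix $Y' \in \mathbb{R}^{(\S_2 \sm U) \times (\S'_2 \sm U)}$ by the same column-and-row partitioning device: for each $\a \in \S_2 \sm U$ and $\b \in \S'_2 \sm U$, use condition (ii) — now in the weaker form ``$\S_1$ refines $\S_2 \sm U$ and $\S'_1$ refines $\S'_2 \sm U$'' — to write $\a$ as a disjoint union of sets $S_1, \ldots, S_p \in \S_1$ and $\b$ as a disjoint union of sets $S'_1, \ldots, S'_q \in \S'_1$, and then set $Y'[\a,\b] := \sum_{i,j} Y[S_i, S'_j]$. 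The point where (RCMC) enters is the well-definedness of this double sum: fixing the column decomposition of $\b$, for each block $S'_j$ the entry $Y[\,\cdot\,, S'_j]$ is a genuine measure-consistent vector on $\S_1$ (this is exactly ``every column of $Y$ satisfies (OMC)''), so $\sum_i Y[S_i, S'_j]$ is independent of the decomposition of $\a$; symmetrically, fixing $\a$'s decomposition, row-consistency of $Y$ makes the sum over $j$ independent of $\b$'s decomposition. Combining, $Y'[\a,\b]$ is well-defined, and taking $\a = \b = \F$ gives $Y'[\F,\F] = 1$ and $\het{x}(Y'e_{\F}) = \het{x}$ as before.

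Next I would extend $Y'$ to $Y'' \in \mathbb{R}^{\S_2 \times \S'_2}$ using the $P$-useless structure of $U$, exactly as in the proof of \autoref{OMCuseless}: for each $\a \in U$ with $\tn{conv}(\a) \cap P \neq \es$ pick $h(\a) \in \S'_2 \sm U$ (or $\S_2 \sm U$ as appropriate) with $\tn{conv}(\a) \cap P = \tn{conv}(h(\a)) \cap P$, which exists by iterating the definition of $P$-uselessness and finiteness of $U$; define the duplication/zeroing maps $V^1, V^2$ and set $Y'' := V^1 Y' (V^2)^{\top}$. This construction by design makes $Y''$ consistent with $Y'$ — and since $Y'$ was built to be consistent with $Y$, transitivity of consistency gives $Y''$ consistent with $Y$. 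Then condition (iv) yields $Y'' \in g_2(P)$, condition (iii) — applied columnwise, using that each column $Ye_{S'}$ of $Y$ lies in $f_1(S')$ by definition of $\tilde{\Gamma}_1$ and that the corresponding column of $Y''$ is consistent with it — yields $Y''e_{S} \in f_2(S)$ for all $S \in \S'_2$, so $Y'' \in \tilde{\Gamma}_2(P)$ and $\het{x}(Y''e_{\F}) = \het{x}$, i.e. $x \in \Gamma_2(P)$.

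**Main obstacle.** The substantive difference from \autoref{OMCuseless} is that (RCMC) is strictly weaker than (OMC), so I must be careful that every place the earlier proof invoked full self-consistency of $Y$ (or of $Y'$, $Y''$) is in fact only using row- and column-consistency. The delicate points are: (a) the two-stage well-definedness argument for $Y'[\a,\b]$ above, which must genuinely factor through one index at a time rather than appealing to a joint decomposition of $\a \cap \b$; (b) checking that the map $Y \mapsto Y'' $ preserves (RCMC), or more precisely preserves enough consistency that conditions (iii) and (iv) can be applied — here I would verify that consistency between $Y''$ and $Y$ in the sense defined in the excerpt (matching sums over matching unions of $S_i \cap S'_i$) still holds, since that is all (iii) and (iv) require, and that notion is insensitive to whether the underlying matrix is OMC or merely RCMC. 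I expect (a) to be the part needing the most care, since in the (OMC) world one sidesteps it by decomposing $\a \cap \b$ directly, whereas with only (RCMC) one cannot freely regroup a mixed collection of $S_i \cap S'_j$ terms; the resolution is precisely to keep the row index and column index separated throughout, so that only single-row and single-column measure consistency of $Y$ is ever used.
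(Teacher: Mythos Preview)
Your proposal is correct and follows essentially the same approach as the paper's proof: both build $Y'$ by choosing separate partitions $I_{\a} \subseteq \S_1$ of each $\a \in \S_2 \sm U$ and $I'_{\b} \subseteq \S'_1$ of each $\b \in \S'_2 \sm U$, set $Y'[\a,\b] = \sum_{S \in I_{\a}, S' \in I'_{\b}} Y[S,S']$, and use (RCMC) exactly as you describe to establish well-definedness one index at a time before invoking the $V^1, V^2$ extension from the proof of \autoref{OMCuseless}. Your identification of point (a) --- keeping row and column decompositions separate rather than decomposing $\a \cap \b$ jointly --- is precisely the modification the paper makes relative to the (OMC) argument.
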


\begin{proof}
The result can be shown by following the same outline as in the proof of \autoref{OMCuseless}. Suppose $x \in \Gamma_1(P)$ and $Y \in \mathbb{R}^{\S_1 \times \S'_1}$ is a certificate matrix for $x$. For each $\a \in \S_2 \setminus U$, define $I_{\a}$ to be a collection of sets in $\S_1$ that partitions $\a$. Since $\S_1$ refines $\S_2 \setminus U$, such a collection must exist. Likewise, for all $\a \in \S'_2 \sm U$, we define $I'_{\a}$ to be a collection of sets in $\S'_1$ that partitions $\a$.

Next, we define $Y' \in \mathbb{R}^{(\S_2 \sm U) \times (\S'_2 \sm U)}$ such that
\[
Y'[\a, \b] := \sum_{S \in I_{\a}, S' \in I'_{\b}} Y[S,S'].
\]
Since $Y$ satisfies (RCMC), $Y'[\a,\b]$ is invariant under the choices of $I_{\a}$ and $I'_{\b}$. From here on, we can define $V_1, V_2$ and $Y'' \in \mathbb{R}^{\S_2 \times \S'_2}$ as in the proof of \autoref{OMCuseless}, and apply the same reasoning therein to show that it is in $\tilde{\Gamma}_2(P)$. Now since $\het{x}(Y'' e_{\F}) = \het{x}(Y e_{\F}) = \het{x}$, we conclude that $x \in \Gamma_2(P)$.
\end{proof}

\subsection{Implications and Applications}

Next, we look into several implications of~\autoref{OMCuseless} and~\autoref{RCMCuseless}. First, it is apparent that given two operators $\Gamma_1, \Gamma_2$ and a set $P$ such that $\Gamma_1(P) \subseteq \Gamma_2(P)$, the integrality gap of $\Gamma_1(P)$ is no more than that of $\Gamma_2(P)$ with respect to any chosen direction. We will formally define integrality gaps and discuss these results in more depth in Section 5.

Next, we relate the performance of $\BZ'$ and $\SA'$ under some suitable conditions. First, we define a tier $S \in \T_k$ to be $P$-useless if all variables associated with $S$ are $P$-useless. Then we have the following:

\begin{tm}\label{SABZ'}
Suppose there exists $\l \in [n]$ such that all tiers $S$ generated by $\BZ'^k$ of size greater than $\l$ are $P$-useless. Then
\[
\BZ'^k(P) \supseteq \SA'^{2\l}(\O_k(P)).
\]
\end{tm}

\begin{proof}
Let $\Gamma_1 = \SA'^{2\l}(\O_k(\cdot))$ and $\Gamma_2 = \BZ'^k(\cdot)$. We prove our assertion by checking all conditions listed in \autoref{OMCuseless}.

First of all, for every set $P \subseteq [0,1]^n$, all matrices in the lifted space of $\SA'^{2\l}(\O_k(P))$ satisfy (OMC). Next, since $\S_1 = \A_1^+$ and $\S_1' = \A_{2\l}$, we see that $\set{S \cap S' : S \in \S_1, S' \in \S_1'}$ refines $\A_{2\l}$. On the other hand, since every tier of size greater than $\l$ is $P$-useless, we see that $\A_{\l}$ refines both $\S_2 \sm U$ and $\S_2' \sm U$. Thus, $
\A_{2\l} =\set{ S \cap S' : S, S'\in \A_{\l} }$ refines $\set{ S \cap S' : S \in \S_2 \sm U, S' \in \S'_2 \sm U}$. Also, it is apparent that $\S'_1 = \A_{2 \l}$ refines $\S'_2 \sm U$, so (ii) holds.

For (iii), we let $f_1(S) = K(\O_k(P) \cap \tn{conv}(S)),~\forall S \in \A$, and
\[
f_2(S) := \set{y \in \mathbb{R}^{\S'_2} : \het{x}(y) \in K(\O_k(P) \cap \tn{conv}(S)), \tn{$y$ satisfies $(\BZ'2)$}}.
\]
Note that all conditions in $(\BZ' 2)$ are relaxations of constraints in (P5) and (P6), and thus are implied by (OMC). Let $Y \in \tilde{\SA}'^{2\l}(\O_k(P))$, and $Y''$ be the matrix obtained from the construction in the proof of \autoref{OMCuseless}. Since $Y$ satisfies (OMC), so does $Y''$ (as it is consistent with $Y$). Also, since all conditions in ($\BZ' 2$) are implied by (OMC), the columns of $Y''$ must satisfy ($\BZ' 2$).

To check (iv), we see that $g_2(P)$ would be the set of matrices in the lifted space that satisfy $(\BZ' 3)$ and $(\BZ' 4)$. It is easy to see that $(\BZ' 4)$ is implied by (OMC). For $(\BZ' 3)$, suppose $S \in \S_2, S' \in \S'_2$, and $\conv(S) \cap \conv(S') \cap \O_k(P) = \es$.
If $Y''[S, S'] \neq 0$, then we know that $P \cap \tn{conv}(S) \neq \es$ and $P \cap \tn{conv}(S') \neq \es$, by the construction of $Y''$. Thus, define $\a := S$ if $S \not \in U$, and $\a := h(S)$ if $S \in U$. Likewise, define $\b := S'$ if $S' \not\in U$, and $\b := h(S')$ if $S' \in U$. In all cases, we have now obtained $\a \in \S_2 \sm U, \b \in \S'_2 \sm U$ such that $Y''[\a,\b] = Y''[S,S']$.

Since
\[
Y''[\a,\b] = Y'[\a,\b] = \sum_{(T,T') \in I_{\a,\b}} Y[T,T'],
\]
we obtain $T \in \A_1^+,T' \in \A_k$ such that $Y[T,T'] \neq 0$. Then by ($\SA' 4$), $\conv(T) \cap \conv(T') \cap P \neq \es$. This implies that $\conv(S) \cap \conv(S') \cap P \neq \es$, and so ($\BZ' 3$) holds.
\end{proof}

We remark that, with a little more care and using the same observation as in the proof of \autoref{SASA'}, one can slightly sharpen \autoref{SABZ'} and show that $\SA^{2\l}(\O_k(P)) \subseteq \BZ'^k(P)$ under these assumptions.

Next, we look into the lift-and-project ranks of a number of relaxations that arise from
combinatorial optimization problems. For any lift-and-project operator $\Gamma$ and polytope $P$, we define the \emph{$\Gamma$-rank of $P$} to be the smallest integer $k$ such that $\Gamma^k(P) = P_I$. The notion of rank gives us a measure of how close $P$ is to $P_I$ with respect to $\Gamma$. Moreover, it is useful when comparing the performance of different operators applied to the same $P$.

Given a simple, undirected graph $G = (V,E)$, we define
\[
MT(G) := \set{x \in [0,1]^E : \sum_{j :\set{i,j} \in E} x_{ij} \leq 1,~\forall i \in V}.
\]
Then $MT(G)_I$ is the \emph{matching polytope} of $G$, and is exactly the convex hull of incidence vectors of matchings of $G$.

While there exist efficient algorithms that solve the matching problem (e.g. Edmonds' seminal blossom algorithm~\cite{Edmonds65a}), many lift-and-project operators have been shown to require exponential time to compute the matching polytope starting with $MT(G)$. In particular, $MT(K_{2n+1})$ is known to have $\LS_+$-rank $n$~\cite{StephenT99a} and  $\BCC$-rank $n^2$~\cite{AguileraBN04a}. More recently, Mathieu and Sinclair~\cite{MathieuS09a} showed that the $\SA$-rank of $MT(K_{2n+1})$ is $2n-1$. Using their result and \autoref{SABZ'}, we can show that this polytope is also a bad instance for $\BZ'$.

\begin{tm}\label{BZ'MTG}
The $\BZ'$-rank of $MT\left(K_{2n+1}\right)$ is at least $\left\lceil \sqrt{2n} - \frac{3}{2} \right\rceil$.
\end{tm}

\begin{proof}
Let $G = K_{2n+1}$ and $P = MT(G)$. We first identify the tiers generated by $\BZ'^k$ that are $P$-useless. Observe that a set $O \subseteq E$ is a $k$-small obstruction generated by $\BZ'^k$ if there is a vertex that is incident with all edges in $O$, and that $2 \leq |O| \leq k+1$ or $|O| \geq 2n-k$. Now suppose $W \in \W_k$ is a wall, and let $\set{e_1, e_2, \ldots, e_p}$ be a maximum matching contained in $W$. Notice that for $e_1 = \set{u_1,v_1}$ to be in $W$, it has to be contained in at least two obstructions and each of these obstructions has to originate from the $u_1$- or $v_1$-constraint in the formulation of $MT(G)$. Now suppose $e_2 = \set{ u_2, v_2}$. By the same logic, we deduce that the obstructions that allow $e_2$ to be in $W$ have to be different from those that enabled $e_1$ to be in $W$. Since each wall is generated by at most $k+1$ obstructions, we see that $p \leq \frac{k+1}{2}$. Therefore, for every tier $S \in \T_k$ (which has to be contained in the union of $k$ walls), the maximum matching contained in $S$ has at most $\frac{k(k+1)}{2}$ edges.

Hence, if a tier $S$ has size greater than $\frac{k(k+1)}{2} + k$, then $S \sm T$ is not a matching for any set $T \subseteq S$ of size up to $k$, which implies $\conv{(S \sm T)|_1} \cap P = \es$, and so $\conv{(S \sm T)|_1 \cap T|_0} \cap P = \es$.  Thus, the only variables $\a$ associated with $S$ such that $\conv(\a) \cap P \neq \es$ take the form $\a = (S \sm (T \cup U))|_1 \cap T|_0 \cap U|_{< |U| -(k-|T|)}$ for some disjoint sets $U,T$ where $|T| < k$ and $|U| + |T| > k$. Next, observe that $(S \sm (T \cup U))|_1 \cap T|_0$ is partitioned by $\a$ and the sets
\begin{equation}\label{BZ'MTGeq1}
(S \sm (T \cup U))|_1 \cap T|_0 \cap U'|_1 \cap (U \sm U')|_0
\end{equation}
where $U' \geq |U| -(k-|T|)$. Also, since $S$ is a tier generated by $\BZ'^k$, so is its subset $S \sm U$, and we see that the variable $(S \sm (T \cup U))|_1 \cap T|_0$ is present. Thus, every set in~\eqref{BZ'MTGeq1}, together with $\a$, are $P$-useless. Since this argument applies for all $\a$'s in the above form, we see that all variables associated with $S$ are $P$-useless.

Since it was shown in~\cite{MathieuS09a} that $P$ has $\SA$-rank $2n-1$, it follows from ~\autoref{SASA'} that the $\SA'$-rank of $P$ is at least $2n-2$. Thus, by~\autoref{SABZ'}, for $\BZ'^k(P)$ to be equal to $P_I$, we need $2\left( \frac{k(k+1)}{2}  + k \right) \geq 2n -2$.  Therefore, $k \geq \sqrt{2n} -\frac{3}{2}$.
\end{proof}

The best upper bound we know for the $\BZ'$-rank of $MT(K_{2n+1})$ is $2n-1$ (due to Mathieu and Sinclair's result, and the fact that $\BZ'^k$ dominates $\SA^k$). We shall see in the next section that strengthening $\BZ'$ by an additional positive semidefiniteness constraint decreases the current best upper bound to roughly $\sqrt{2n}$.

We next look at the stable set problem of graphs. Given a graph $G = (V,E)$, its \emph{fractional stable set polytope} is defined to be
\[
FRAC(G) := \set{ x \in [0,1]^V : x_i + x_j \leq 1,~\forall \set{i,j} \in E}.
\]
Then the \emph{stable set polytope} $STAB(G) := FRAC(G)_I$ is precisely the convex hull of incidence vectors of stable sets of $G$. Since there is a bijection between the set of matchings in $G$ and the set of stable sets in its line graph $L(G)$, the next result follows readily from \autoref{BZ'MTG}.

\begin{cor}\label{BZ'LG}
Let $G$ be the line graph of $K_{2n+1}$. Then the $\BZ'$-rank of $FRAC(G)$ is at least $\left\lceil \sqrt{2n} - \frac{3}{2} \right\rceil$.
\end{cor}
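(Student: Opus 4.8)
The plan is to obtain \autoref{BZ'LG} directly from \autoref{BZ'MTG}. Identify the ground set $E(K_{2n+1})$ of $MT(K_{2n+1})$ with the vertex set of $G=L(K_{2n+1})$; under this identification the stable sets of $G$ are precisely the matchings of $K_{2n+1}$, so $FRAC(G)_I = STAB(G) = MT(K_{2n+1})_I$, and moreover $MT(K_{2n+1}) \subseteq FRAC(G)$, since each inequality $x_a + x_b \le 1$ defining $FRAC(G)$ follows (using $x \ge 0$) from the clique inequality $\sum_{e\ni i} x_e \le 1$ of $MT(K_{2n+1})$, where $i$ is the common endpoint of the edges $a,b$. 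Thus it suffices to show $\BZ'^k(MT(K_{2n+1})) \subseteq \BZ'^k(FRAC(G))$ for every $k$: then $\BZ'^k(FRAC(G)) = FRAC(G)_I$ would force $\BZ'^k(MT(K_{2n+1})) = MT(K_{2n+1})_I$, which by \autoref{BZ'MTG} cannot happen unless $k \ge \lceil \sqrt{2n} - \tfrac{3}{2}\rceil$.

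To prove this inclusion I would first compare the two liftings. Every defining inequality of $FRAC(G)$ has support of size $2$, so the only $k$-small obstructions of $FRAC(G)$ are the pairs $\{a,b\}$ with $a,b$ adjacent in $G$, and each such pair is also a $k$-small obstruction of $MT(K_{2n+1})$ (it is a size-$2$ subset of the edges meeting the common vertex). Since walls, tiers, and hence the associated variable family $\A'$ depend monotonically on the obstruction family, the variable set $\BZ'^k$ builds on $FRAC(G)$ is contained in the one it builds on $MT(K_{2n+1})$. Also, the obstruction inequalities of $MT(K_{2n+1})$ are all implied by its clique inequalities, so $\O_k(MT(K_{2n+1})) = MT(K_{2n+1})$ and likewise $\O_k(FRAC(G)) = FRAC(G)$, whence $K(\O_k(MT(K_{2n+1}))) \subseteq K(\O_k(FRAC(G)))$. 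Now given $x \in \BZ'^k(MT(K_{2n+1}))$ with a certificate $Y \in \tilde{\BZ}'^k(MT(K_{2n+1}))$, let $Y'$ be its principal submatrix on the smaller variable set. Conditions $(\BZ'1)$, $(\BZ'4)$, and every part of $(\BZ'2)$ other than the cone-membership condition restrict to $Y'$ because that variable set is closed under the sets those conditions refer to; the cone-membership condition restricts by the cone inclusion just noted; and $(\BZ'3)$ restricts because $MT(K_{2n+1}) \subseteq FRAC(G)$ forces $\conv(\a)\cap\conv(\b)\cap FRAC(G) = \es$ to imply $\conv(\a)\cap\conv(\b)\cap MT(K_{2n+1}) = \es$, hence $Y[\a,\b]=0$. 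So $Y' \in \tilde{\BZ}'^k(FRAC(G))$ certifies $x \in \BZ'^k(FRAC(G))$.

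The main obstacle is the bookkeeping in the last step: one must verify carefully that the variable family of $\BZ'^k$ on $FRAC(G)$ is literally a subfamily of that on $MT(K_{2n+1})$, and that the wall-type identities in $(\BZ'2)$ reference only variables that stay inside this subfamily, so that the principal submatrix really does satisfy all the constraints. If one prefers to avoid this, an alternative self-contained route is to rerun the proof of \autoref{BZ'MTG} with $P := FRAC(G)$: the $k$-small obstructions of $FRAC(G)$ are exactly the edges of $G$, so each wall has at most $k+1$ elements and (each edge of a matching of $K_{2n+1}$ lying in $G$-obstructions that cannot be shared with another matching edge) contains a matching of $K_{2n+1}$ of size at most $\tfrac{k+1}{2}$; hence each tier contains a matching of size at most $\tfrac{k(k+1)}{2}$, every tier of size exceeding $\tfrac{k(k+1)}{2}+k$ is $P$-useless by the same partition argument, and, using that the $\SA$-rank of $FRAC(G)$ is at least that of $MT(K_{2n+1})$ (so, by \autoref{SASA'}, its $\SA'$-rank is at least $2n-2$), \autoref{SABZ'} yields the bound $k \ge \sqrt{2n} - \tfrac{3}{2}$.
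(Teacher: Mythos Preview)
Your proposal is correct and follows essentially the same approach as the paper: establish $MT(K_{2n+1}) \subseteq FRAC(G)$ and that every $k$-small obstruction of $FRAC(G)$ is also one of $MT(K_{2n+1})$, deduce $\BZ'^k(MT(K_{2n+1})) \subseteq \BZ'^k(FRAC(G))$, and invoke \autoref{BZ'MTG}. The paper's write-up is terser (it asserts the last inclusion directly from the obstruction containment and $\O_k(FRAC(G))=FRAC(G)$), whereas you spell out the certificate-restriction argument and note the bookkeeping issue; that issue is in fact harmless, since the variable family $\A'$ is closed under the references in $(\BZ'2)$, so your principal-submatrix argument goes through, and your alternative self-contained route is also valid.
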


\begin{proof}
First, it is not hard to see that $MT(H) \subseteq FRAC(L(H))$, for every graph $H$. Also, since the collection of $k$-small obstructions of $FRAC(G)$ is exactly the set of edges of $G$ for all $k \geq 1$, we see that $FRAC(G) = \O_k(FRAC(G))$. Therefore,
\[
\O_k(MT(K_{2n+1})) \subseteq MT(K_{2n+1}) \subseteq FRAC(G) = \O_k(FRAC(G)).
\]
This, together with the fact that every $k$-small obstruction of $FRAC(G)$ is also a $k$-small obstruction of $MT(K_{2n+1})$, implies that $\BZ'^k(MT(K_{2n+1})) \subseteq \BZ'^k(FRAC(G))$. Thus, the $\BZ'$-rank of $FRAC(G)$ is at least that of $MT(K_{2n+1})$, and our claim follows.
\end{proof}

Thus, we obtain from \autoref{BZ'LG}, a family of graphs on $n$ vertices whose fractional stable set polytope has $\BZ'$-rank $\Omega(n^{1/4})$.

We next turn to the complete graph $G := K_n$. It is well known that $FRAC(G)$ has rank $\Theta(n)$ with respect to $\SA$ (and as a result, all weaker operators such as $\LS$ and $\LS_0$). We show that this is also true for $\BZ'$. 

\begin{tm}\label{BZrankofFRACG}
The $\BZ'$-rank of $FRAC(K_n)$ is between $\left\lceil \frac{n}{2} \right\rceil -2$ and $\left\lceil \frac{n+1}{2}  \right\rceil $, for all $n \geq 3$. The same bounds apply for the $\BZ$-rank.
\end{tm}

The proof of \autoref{BZrankofFRACG} will be provided in the Appendix. Thus, we see that, like all other popular polyhedral lift-and-project operators, $\BZ'$ (which is already stronger than $\BZ$) performs poorly on the fractional stable set polytope of complete graphs.

\section{Tools for analyzing Lift-and-Project Operators\\with Positive Semidefiniteness}

Up to this point, we have looked exclusively at lift-and-project operators that produce polyhedral relaxations, where the main tool operators use to gain strength is to lift a given relaxation to a higher dimensional space. In this section, we turn our focus to operators that do not produce polyhedral relaxations. In particular, we will introduce several lift-and-project operators that utilize positive semidefiniteness, and look into the power and limitations of these additional constraints.

\subsection{Lift-and-Project Operators with Positive Semidefiniteness}

Perhaps the most elementary operator of this type is the $\LS_+$ operator defined in~\cite{LovaszS91a}. Recall that one way to see why $P_I \subseteq \LS(P)$ in general is to observe that for any integral point $x \in P$, $\het{x}\het{x}^{\top}$ is a matrix that certifies $x$'s membership in $\LS(P)$. Since $\het{x}\het{x}^{\top}$ is positive semidefinite for all $x$, if we let $\mathbb{S}_+^{n} \subset \mathbb{S}^n$ denote the set of symmetric, positive semidefinite $n$-by-$n$ matrices, then it is easy to see that
\[
\LS_+(P) := \set{x \in \mathbb{R}^n : \exists Y \in \mathbb{S}_+^{n+1}, Ye_i,  Y(e_0 - e_i)\in K(P),~\forall i \in [n], Ye_0 = \tn{diag}(Y) = \het{x}}
\]
contains $P_I$ as well. Also, by definition, $\LS_+(P) \subseteq \LS(P)$ for all $P \subseteq [0,1]^n$, and thus $\LS_+$ potentially obtains a tighter relaxation than $\LS(P)$ in general.

Likewise, we can also define positive semidefinite variants of $\SA$. Given any positive integer $k$, we define the operators $\SA_+^k$ and $\SA_+'^{k}$ as follows:

\begin{enumerate}
\item
Let $\tilde{\SA}_+^k(P)$ denote the set of matrices $Y \in \mathbb{S}_+^{\A_{k}}$ that satisfy all of the following conditions:
\begin{itemize}
\item[($\SA_+ 1$)]
$Y[\F, \F] = 1$.
\item[($\SA_+ 2$)]
For every $\a \in \A_k$:
\begin{itemize}
\item[(i)]
$\het{x}(Ye_{\a}) \in K(P)$;
\item[(ii)]
$Ye_{\a} \geq 0$.
\end{itemize}
\item[($\SA_+ 3$)]
For every $S|_1 \cap T|_0 \in \A_{k-1}$,
\[
Ye_{S|_1 \cap T|_0 \cap j|_1} + Ye_{S|_1 \cap T|_0 \cap j|_0} = Ye_{S|_1 \cap T|_0}, \quad \forall j \in [n] \sm (S \cup T).
\]
\item[($\SA_+4$)]
For all $\a,\b \in \A_k$ such that $\a \cap \b = \es, Y[\a,\b] = 0$.
\item[($\SA_+5$)]
For all $\a_1,\a_2, \b_1, \b_2 \in \A_k$ such that $\a_1 \cap \b_1 = \a_2 \cap \b_2, Y[\a_1, \b_1] = Y[\a_2, \b_2]$.
\end{itemize}
\item
Let $\tilde{\SA}_+'^{k}(P)$ be the set of matrices $\tilde{\SA}_+^k(P)$ that also satisfy:
\begin{itemize}
\item[($\SA_+' 4$)]
For all $\a,\b \in \A_k$ such that $\conv(\a) \cap \conv(\b) \cap P = \es, Y[\a,\b] = 0$.
\end{itemize}
\item
Define
\[
\SA_+^k(P) = \set{x \in \mathbb{R}^n: \exists Y \in \tilde{\SA}_+^k(P),  \het{x}(Ye_{\F})= \het{x}},
\]
and
\[
\SA_+'^{k}(P) := \set{x \in \mathbb{R}^n: \exists Y \in \tilde{\SA}_+'^{k}(P),  \het{x}(Ye_{\F})= \het{x}}.
\]
\end{enumerate}

The $\SA_+^k$ and $\SA_+'^{k}$ operators extend the lifted space of the $\SA^k$ operator to a set of square matrices, and impose an additional positive semidefiniteness constraint. What sets these two new operators apart is that $\SA_+'^{k}$ utilizes a $(\BZ' 3)$-like constraint to potentially obtain additional strength over $\SA_+^k$. While we have seen in their polyhedral counterparts $\SA'$ and $\SA$ that adding this additional constraint could decrease the rank of a polytope by at most one, we shall provide an example later in this section in which the $\SA'_+$-rank of a polytope is lower than the $\SA_+$-rank by $\Theta(n)$.

Note that in ($\SA_+2$) we have imposed that all certificate matrices in $\tilde{\SA}_+^k(P)$ (which contains $\tilde{\SA}_+'^k(P)$) have nonnegative entries, which obviously holds for matrices lifted from integral points. In contrast with ($\SA 2$), the nonnegativity condition was not explicitly stated there as it is implied by the fact that $P \subseteq [0,1]^n$.

It is well known that $\SA^k(P) \subseteq \LS(\SA^{k-1}(P))$ for all polytopes $P \subseteq [0,1]^n$ and for all $k \geq 1$ (see, for instance, Theorem 12 in~\cite{Laurent03a} for a proof). It then follows that $\SA^k$ dominates $\LS^k$ for all $k \geq 1$. Using very similar ideas, we prove an analogous result for the semidefinite counterparts of these operators:

\begin{prop}\label{LS_+SA_+}
For every polytope $P \subseteq [0,1]^n$ and every integer $k \geq 1$,
\[
\SA_+^k(P) \subseteq \LS_+(\SA_+^{k-1}(P)).
\]
\end{prop}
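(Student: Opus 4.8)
The plan is to mimic the known argument for $\SA^k(P) \subseteq \LS(\SA^{k-1}(P))$, carrying the positive semidefiniteness through each step. Let $x \in \SA_+^k(P)$, and let $Y \in \tilde{\SA}_+^k(P)$ with $\het{x}(Ye_{\F}) = \het{x}$. I want to produce a matrix $Z \in \mathbb{S}_+^{n+1}$ certifying membership of $x$ in $\LS_+(\SA_+^{k-1}(P))$; that is, $Z$ must satisfy $Ze_0 = \tn{diag}(Z) = \het{x}$, $Z$ symmetric positive semidefinite, and $Ze_i,\, Z(e_0 - e_i) \in K(\SA_+^{k-1}(P))$ for every $i \in [n]$. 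The natural candidate is the principal submatrix of $Y$ indexed by $\set{\F, 1|_1, \ldots, n|_1} = \A_1^+$, with rows/columns relabelled by $\set{0, 1, \ldots, n}$ via $\het{x}(\cdot)$. Symmetry and positive semidefiniteness of $Z$ are immediate since $Z$ is a principal submatrix of the psd matrix $Y$. The conditions $Ze_0 = \het{x}$ and $\tn{diag}(Z) = \het{x}$ follow from $Ye_{\F} = \het{x}$ together with $(\SA_+5)$ applied to the diagonal (since $\F \cap i|_1 = i|_1 \cap i|_1 = i|_1$) and $(\SA_+4)$/$(\SA_+5)$ as in the text's discussion of the $\SA^1$ matrix.

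The substantive step is to show $Ze_i \in K(\SA_+^{k-1}(P))$ for each $i$, and similarly $Z(e_0 - e_i) \in K(\SA_+^{k-1}(P))$. Fix $i \in [n]$. First I would handle the degenerate case $Y[\F, i|_1] = 0$: then by psd-ness the entire row/column $i|_1$ of $Y$ vanishes, so $Ze_i = 0 \in K(\SA_+^{k-1}(P))$ trivially. Otherwise, consider the matrix $Y^{(i)}$ obtained from $Y$ by taking only the columns indexed by sets of the form $\a \cap i|_1$ with $\a \in \A_{k-1}$, and appropriately restricting/reindexing the rows so that $Y^{(i)}$ lives in $\mathbb{R}^{\A_1^+ \times \A_{k-1}}$ with its column indexed by $\F$ being precisely $Y e_{i|_1} = Ze_i$ (after the $\het{x}(\cdot)$ relabelling). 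Scaling by $1/Y[\F, i|_1]$, I claim the resulting matrix lies in $\tilde{\SA}_+^{k-1}(P)$ and certifies that $\frac{1}{Y[\F,i|_1]} Ze_i \in \SA_+^{k-1}(P)$; hence $Ze_i \in K(\SA_+^{k-1}(P))$. Checking this claim is where the $(\SA_+ j)$ conditions get used: $(\SA_+1)$ for the $[\F,\F]$ entry becomes $Y[i|_1, i|_1]/Y[\F,i|_1]$, which one argues equals $1$-scaled correctly; $(\SA_+2)(i)$–(ii) for column $\a \cap i|_1$ of $Y$ give the corresponding condition for column $\a$ of the new matrix (using $\a \cap i|_1 \in \A_k$); $(\SA_+3)$ for $Y$ at sets $S|_1 \cap T|_0 \cap i|_1 \in \A_{k-1}$ yields $(\SA_+3)$ for the new matrix; $(\SA_+4)$, $(\SA_+5)$ transfer by the same set-intersection identities, since $(\a \cap i|_1) \cap (\b \cap i|_1) = \a \cap \b \cap i|_1$. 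Positive semidefiniteness of the new matrix requires an argument: the submatrix of $Y$ on rows and columns indexed by $\set{\a \cap i|_1 : \a \in \A_{k-1}}$ is psd (principal submatrix), and one identifies the new matrix as a suitable linear image — or, more cleanly, observes that by $(\SA_+5)$ the new matrix is (a relabelling of) exactly that principal submatrix, scaled by the positive constant $1/Y[\F, i|_1]$, hence psd.

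For $Z(e_0 - e_i)$, I would run the same argument with $i|_1$ replaced throughout by $i|_0$, using that $i|_0 \in \A_1$, that $\a \cap i|_0 \in \A_k$ for $\a \in \A_{k-1}$, and that $(\SA_+3)$ gives $Ye_{i|_0} = Ye_{\F} - Ye_{i|_1}$ so that the relevant column is indeed $Z(e_0 - e_i)$ after relabelling; the degenerate case is now $Y[\F, i|_0] = 0$. The main obstacle I anticipate is bookkeeping: making the reindexing between $\A_1^+$-indexed columns and $\set{0,\ldots,n}$-indexed columns precise, and verifying that slicing $Y$ by $i|_1$ (resp.\ $i|_0$) genuinely lands inside $\tilde{\SA}_+^{k-1}(P)$ rather than merely something close to it — in particular that no condition of $\tilde{\SA}_+^{k-1}$ needs a variable outside $\set{\a \cap i|_1 : \a \in \A_{k-1}}$. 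This is exactly the kind of check that works in the non-psd case (Laurent's proof), and the only genuinely new ingredient here is the psd transfer, which follows because taking principal submatrices and positive scalar multiples preserves $\mathbb{S}_+$.
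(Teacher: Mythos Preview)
Your approach is essentially the same as the paper's: take the principal $\A_1^+$-submatrix $Z$ of $Y$ as the $\LS_+$ certificate, and for each $i$ certify $Ze_i \in K(\SA_+^{k-1}(P))$ via the scaled principal submatrix $Y''[\a,\b] := \tfrac{1}{Y[\F,i|_1]}\,Y[\a \cap i|_1,\ \b \cap i|_1]$ for $\a,\b \in \A_{k-1}$ (and analogously with $i|_0$). One cleanup: your first description of $Y^{(i)}$ as a rectangular matrix in $\mathbb{R}^{\A_1^+ \times \A_{k-1}}$ does not match the shape of $\tilde{\SA}_+^{k-1}(P) \subseteq \mathbb{S}_+^{\A_{k-1}}$; your later ``more cleanly'' remark (principal submatrix on $\{\a \cap i|_1 : \a \in \A_{k-1}\}$, scaled) is exactly the paper's construction and is what you should write from the start.
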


\begin{proof}
Suppose $Y \in \tilde{\SA}_+^k(P)$ and $\het{x}(Ye_{\F}) = \het{x}$. Let $Y'$ be the $(n+1)$-by-$(n+1)$ symmetric minor of $Y$, with rows and columns indexed by elements in $\A^+_1$. To adapt to the notation for $\LS_+$, we index the rows and columns of $Y'$ by $0,1,\ldots, n$ (instead of $\F, 1|_1, \ldots, n|_1$). It is obvious that $Y' \in \mathbb{S}_+^{n+1}$, and $Y'e_0 = \tn{diag}(Y') = \het{x}$. Thus, it suffices to show that $Y'e_i,  Y'(e_0 - e_i)\in K(\SA_+^{k-1}(P)),~\forall i \in [n]$.

We first show that $Y'e_i \in K(\SA_+^{k-1}(P))$. If $(Y'e_i)_0 = 0$, then $Y'e_i$ is the zero vector and the claim is obviously true. Next, suppose $(Y'e_i)_0 >0$. Define the matrix $Y'' \in \mathbb{S}^{\A_{k-1}}$, such that
\[
Y''[\a,\b] = \frac{1}{(Y'e_i)_0}Y[\a \cap i|_1, \b \cap i|_1], \quad \forall \a,\b \in \A_{k-1}.
\]
Notice that $Y''$ is a positive scalar multiple of a symmetric minor of $Y$, and thus is positive semidefinite. Moreover, it satisfies ($\SA_+ 1$) by construction, and inherits the properties ($\SA_+ 2$) to  ($\SA_+ 5$) from $Y$. Thus, $Y'' \in \tilde{\SA}_+^{k-1}(P)$ and $\het{x}(Y'' e_{\F}) = \frac{1}{(Y'e_i)_0}Y'e_i \in K(\SA_+^{k-1}(P))$. The argument for $Y'(e_0 - e_i)$ is analogous.
\end{proof}

It follows immediately from \autoref{LS_+SA_+} that $\SA_+^k(P) \subseteq \LS_+^k(P)$, and thus $\SA_+^k$ dominates $\LS_+^k$. The $\SA_+$ and $\SA'_+$ operators will be useful in simplifying our analysis and improving our understanding of the Bienstock--Zuckerberg operator enhanced with positive semidefiniteness, which is defined as
\[
\BZ_{+}'^{k}(P) := \set{x \in \mathbb{R}^n: \exists Y \in \tilde{\BZ}_+'^{k}(P),  \het{x}(Ye_{\F})= \het{x}},
\]
where $\tilde{\BZ}_+'^{k}(P) := \tilde{\BZ}'^{k}(P) \cap \mathbb{S}_+^{\A'}$.

\subsection{Unhelpful variables in PSD relaxations}

We see that in \autoref{RCMCuseless}, in the special case of comparing two lift-and-project operators whose lifted spaces are both square matrices (i.e. $\S_1 = \S'_1$ and $\S_2 = \S'_2$), the construction of $Y'$ and $Y''$ preserves positive semidefiniteness of $Y$. Thus, this framework can be applied even when $g_1$ and $g_2$ enforce positive semidefiniteness constraints in their respective lifted spaces. The following is an illustration of such an application:

\begin{tm}\label{SA'+BZ'+}
Suppose there exists $\l \in [n]$ such that all tiers $S$ generated by $\BZ_+'^{k}$ of size greater than $\l$ are $P$-useless. Then
\[
\BZ_{+}'^k(P) \supseteq \SA_+'^{\l}(\O_k(P)).
\]
\end{tm}

\begin{proof}
We prove our claim by verifying the conditions in \autoref{RCMCuseless}. First, every matrix in the lifted space of $\SA_+'^{\l}$ satisfies (OMC), which implies (RCMC). Next, since $\S_1 = \S'_1 = \A_{\l}$ and every tier of $\BZ_+'^{k}$ that is not useless has size at most $\l$, we see that (ii) holds as well.

For (iii), note that we can let
\[
f_1(S) = \set{y \in \mathbb{R}^{\S'_1} : \het{x}(y) \in K(P \cap \tn{conv}(S)), \tn{$y$ satisfies (OMC)}},
\]
and
\[
f_2(S) = \set{y \in \mathbb{R}^{\S'_2} : \het{x}(y) \in K(P \cap \tn{conv}(S)), \tn{$y$ satisfies $(\BZ'2)$}}.
\]
As mentioned before, all conditions in $(\BZ' 2)$ are implied by (OMC) constraints and the fact that $\A_{\l}$ refines $\S_2$. Thus, (iii) is satisfied.

For (iv), we see that $g_2(P)$ would be the set of matrices in $\mathbb{S}_+^{\S_2}$ that satisfy $(\BZ' 3)$ and $(\BZ' 4)$. It is easy to see that $(\BZ' 4)$ is implied by (OMC). Also, $(\BZ' 3)$ is implied by $(\SA_+' 4)$. Thus, we are finished.
\end{proof}

\subsection{Utilizing $\l$-establishing variables}

Somewhat complementary to the notion of useless variables, here we look into instances where the presence of a certain set of variables in the lifted space provides a guarantee on the overall performance of the operator.  Given $j \in \set{0,1,\ldots, n}$, let $[n]_j$ denote the collection of subsets of $[n]$ of size $j$. Suppose $Y \in \mathbb{S}^{\A'}$ for some $\A' \subseteq \A$, and there exists a positive integer $\l$ where all of the following conditions hold:

\begin{itemize}
\item[($\l1$)]
$Y[\F, \F] = 1$.
\item[($\l2$)]
$Y \succeq 0$.
\item[($\l3$)]
$\A^+_{\l} \subseteq \A'$.
\item[($\l4$)]
For all $\a,\b, \a', \b' \in \A^+_{\l}$ such that $\a \cap \b = \a' \cap \b', Y[\a, \b] = Y[\a',\b']$.
\item[($\l5$)]
For all $\a,\b \in \A^+_{\l}, Y[\F,\b] \geq Y[\a,\b]$.
\end{itemize}

Then we say that such a matrix $Y$ is \emph{$\l$-established}. Notice that all matrices in $\tilde{\SA}_+^{k}(P)$ (which contains $\tilde{\SA}_+'^{k}(P)$) are $\l$-established, for all $P \subseteq [0,1]^n$. A matrix in $\tilde{\BZ}_{+}'^{k}(P)$ is $\l$-established if all subsets of size up to $\l$ are generated as tiers.
Given such a matrix, we may define a vector $y$ whose entries are indexed by the sets $\bigcup_{i=0}^{2\l} [n]_i$  such that $y_{S} = Y[S'|_1, S''|_1]$, where $S', S''$ are subsets of $[n]$ of size at most $\l$ such that $S' \cup S'' = S$. Note that such choices of $S', S''$ must exist by ($\l 3$), and by ($\l 4$) the value of $y_{S}$ is invariant under the choices of $S'$ and $S''$.

Finally, we define $Z \in \mathbb{R}^{2\l +1}$ such that
\[
Z_i := \sum_{S \subseteq [n]_i} y_{S}, \quad \forall i \in \set{0,1,\ldots , 2\l}.
\]
Note that $Z_0$ is always equal to $1$ (by ($\l1$)), and $Z_1 = \sum_{i=1}^n Y[i|_1, \F]$. Also, observe that the entries of $Z$ are related to each other. For example, if $\het{x}(Ye_{\F})$ is an integral $0$,$1$ vector, then by ($\l5$) we know that $y_S \leq 1$ for all $S$, and $y_{S} >0$ only if $y_{\set{i}}=1,~\forall i \in S$. Thus, we can infer that
\[
Z_j = \sum_{S \in [n]_j} y_S \leq \binom{Z_1}{j}, \quad \forall j \in [2\l].
\]
We next show that the positive semidefiniteness of $Y$ also forces the $Z_i$'s to relate to each other, somewhat similarly to the above. The following result would be more intuitive by noting that $\binom{p}{i+1}/\binom{p}{i} = \frac{p-i}{i+1}$.

\begin{prop}\label{Ziup}
Suppose $Y \in \mathbb{S}_+^{\A'}$ is $\l$-established, and $y, Z$ are defined as above. If there exists an integer $p \geq \l$ such that
\[
Z_{i+1} \leq \left( \frac{p-i}{i+1}\right) Z_i, \quad \forall i \in \set{\l, \l+1, \ldots, 2\l-1},
\]
then $Z_i \leq \binom{p}{i},~\forall i \in [2\l]$. In particular, $Z_1 \leq p$.
\end{prop}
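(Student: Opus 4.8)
The plan is to induct downward from $i = 2\ell$ to $i = 1$, using the hypothesis to control the ratio $Z_{i+1}/Z_i$ only in the range $i \in \{\ell, \ldots, 2\ell-1\}$, and to use positive semidefiniteness of $Y$ to get the analogous control in the range $i \in \{1, \ldots, \ell-1\}$. Concretely, suppose we have already established $Z_{i+1} \le \binom{p}{i+1}$ for some $i$; if we also know $Z_{i+1} \le \left(\frac{p-i}{i+1}\right) Z_i$, then since $\binom{p}{i+1} = \left(\frac{p-i}{i+1}\right)\binom{p}{i}$ and $\frac{p-i}{i+1} > 0$ (as $p \ge \ell \ge i+1$ would need checking — more precisely we need $p \ge i$, which holds throughout since $p \ge \ell$ and we will see $Z_i$ can only be nonzero in a controlled range), we could try to conclude $Z_i \ge Z_{i+1}/\left(\frac{p-i}{i+1}\right)$... but that is the wrong direction. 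So the actual mechanism must be: the ratio bound $Z_{i+1} \le \left(\frac{p-i}{i+1}\right) Z_i$ together with a \emph{separate} upper bound on $Z_i$ is what propagates. The honest approach is forward from a base case.

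So instead I would argue forward: first establish $Z_\ell \le \binom{p}{\ell}$ as a base case, then for $i \ge \ell$ use the hypothesis $Z_{i+1} \le \left(\frac{p-i}{i+1}\right) Z_i \le \left(\frac{p-i}{i+1}\right)\binom{p}{i} = \binom{p}{i+1}$ to push the bound upward to $i = 2\ell$; and separately, for $i < \ell$, I would need to \emph{decrease} $i$, which requires a lower-bound-free argument that $Z_i \le \binom{p}{i}$ — here the positive semidefiniteness of $Y$ restricted to rows/columns indexed by $\A^+_\ell$ must be invoked. The key PSD fact is this: for each $j$, consider the vector $v^{(j)} \in \mathbb{R}^{\A^+_\ell}$ supported on sets of size exactly $j$ and on the empty set, chosen so that $\left(v^{(j)}\right)^\top Y v^{(j)} \ge 0$ encodes a relation among $Z_{j-1}, Z_j, Z_{j+1}$ (or among $Z_0, Z_j, Z_{2j}$). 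Using the identification $y_S = Y[S'|_1, S''|_1]$ for $S' \cup S'' = S$, the entry $Y[\a,\b]$ with $|\a| = |\b| = j$ equals $y_{\a \cup \b}$, and summing over all $\a,\b \in [n]_j$ gives $\sum_{\a,\b \in [n]_j} Y[\a,\b] = \sum_{S} \left(\text{number of ways to write } S = \a \cup \b\right) y_S$, a quantity sandwiched between $Z_j^2$-type expressions and $Z_{2j}$-type expressions. Positive semidefiniteness of the all-ones-restricted-to-level-$j$ quadratic form then yields an inequality of Cauchy–Schwarz flavor linking $Z_0 = 1$, $Z_j$, and $Z_{2j}$, from which $Z_j \le \binom{p}{j}$ can be bootstrapped once we have the anchor at level $\ell$.

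The base case $Z_\ell \le \binom{p}{\ell}$ is where the real work lies, and I expect it to be the main obstacle. One route: apply the PSD inequality at level $\ell$ to relate $Z_\ell$ and $Z_{2\ell}$ and $Z_0$, then combine with the chain $Z_{2\ell} \le \left(\prod_{i=\ell}^{2\ell-1}\frac{p-i}{i+1}\right) Z_\ell$ coming from the hypothesis, to obtain a self-referential inequality in $Z_\ell$ alone whose solution forces $Z_\ell \le \binom{p}{\ell}$. Making the combinatorial counting of "number of pairs $(\a,\b) \in [n]_j^2$ with $\a \cup \b = S$" clean — it depends only on $|S|$, and equals $\sum_k \binom{|S|}{k}\binom{|S|-k}{j-k}\binom{... }{...}$ type expression, i.e. $\binom{2j}{j}$-flavored when $|S| = j$ or $|S|=2j$ — will require care but is routine once set up. The final conclusion $Z_1 \le p$ is then the special case $i = 1$ of $Z_i \le \binom{p}{i}$. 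Throughout, one must note $Z_i \ge 0$ for all $i$, which follows from $\A^+_\ell$ consisting of sets $S|_1$ and $(\ell 5)$ together with $(\ell 2)$ forcing the diagonal entries, hence the $y_S$, to be nonnegative — this nonnegativity is what makes the downward and upward propagation of the $\binom{p}{i}$ bounds meaningful rather than vacuous.
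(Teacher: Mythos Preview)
Your proposal is correct and follows essentially the same approach as the paper: establish the base case $Z_\ell \le \binom{p}{\ell}$ by expanding $v^\top Y v \ge 0$ for the vector supported on $\F$ and the level-$\ell$ sets, using the hypothesis to bound the cross terms $\sum_{S,S'\in[n]_\ell} Y[S|_1,S'|_1]$ in terms of $Z_\ell$ alone; then propagate forward to $i>\ell$ via the hypothesis and backward to $i<\ell$ by the analogous PSD inequality at level $i$ combined with the already-established bounds $Z_{i+j}\le\binom{p}{i+j}$ for $j\in[i]$. One small sharpening: the quadratic form at level $j$ involves not just $Z_0,Z_j,Z_{2j}$ but all of $Z_j,Z_{j+1},\ldots,Z_{2j}$, since pairs $\alpha,\beta\in[n]_j$ can overlap in any amount; the paper handles this with the count $|\{(T',T'')\in[n]_\ell^2 : T'\cup T''=T\}|=\binom{\ell}{j}\binom{\ell+j}{\ell}$ for $|T|=\ell+j$ and a Vandermonde-type sum, exactly the ``routine'' counting you anticipated.
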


\begin{proof}
We first show that $Z_{\l} \leq \binom{p}{\l}$. Given $i \in [\l]$, define the vector $v(i) \in \mathbb{R}^{\A'}$
such that
\[
v(i)_{\a} := \left\{
\begin{array}{ll}
\binom{p}{i} & \tn{if $\a = \F$;}\\
-1 & \tn{if $\a = S|_1$ where $S \in [n]_i$;}\\
0 & \tn{otherwise.}
\end{array}
\right.
\]
By the positive semidefiniteness of $Y$, we obtain
\begin{equation}\label{Ziup1}
0 \leq v(\l)^{\top}Yv(\l) = \binom{p}{\l}^2 - 2 \binom{p}{\l} Z_{\l} + \sum_{S, S' \in [n]_{\l}} Y[S|_1, S'|_1].
\end{equation}
Notice that for every $T \in [n]_{\l+j}$, the number of sets $T', T'' \in [n]_{\l}$ such that $T' \cup T'' = T$ is $\binom{\l}{j}\binom{\l+j}{\l}$. Hence, this is the number of times the term $y_{T}$ appears in $\sum_{S, S' \in [n]_{\l}} Y[S|_1, S'|_1]$. We also know by assumption
\begin{equation}\label{Ziup2}
Z_{\l +j} \leq \left( \frac{p-j-\l+1}{j+\l} \right) \left( \frac{p-j-\l+2}{j+\l -1}\right) \cdots \left( \frac{p-\l}{\l+1} \right) Z_{\l}= \frac{\binom{p-\l}{j} }{\binom{\l+j}{\l}} Z_{\l}
\end{equation}
for all $j \in [\l]$. Note that if $p < 2\l$, then by assumption we have $Z_{p+1} \leq \frac{p-p}{p+1} Z_{p} = 0$. As a result, $Z_{2\l} = Z_{2\l -1} =  \cdots = Z_{p+2} = Z_{p+1} =0$. In such cases,~\eqref{Ziup2} still holds as $\binom{p-\l}{j}$ would evaluate to zero. Then we have,
\begin{eqnarray*}
\sum_{S,S' \in [n]_{\l}} Y[S|_1, S'|_1]
&=& \sum_{j=0}^{\l} \sum_{S \in [n]_{\l+j}}  \binom{\l+j}{\l} \binom{\l}{j} y_{S}\\
&=&  \sum_{j=0}^{\l} \binom{\l+j}{\l}  \binom{\l}{j}  Z_{\l + j}\\
&\leq & \sum_{j=0}^{\l}  \binom{\l+j}{\l} \binom{\l}{j} \frac{\binom{p-\l}{j} }{\binom{\l+j}{\l}} Z_{\l}\\
&=& \binom{p}{\l}Z_{\l}.
\end{eqnarray*}
Therefore, we conclude from~\eqref{Ziup1} that $0 \leq \binom{p}{\l}^2 - \binom{p}{\l} Z_{\l}$, which implies that $Z_{\l} \leq \binom{p}{\l}$. Together with~\eqref{Ziup2}, this implies that $Z_{\l+j} \leq \binom{p}{\l +j},~\forall j \in \{0,1, \ldots,\l\}$.

It remains to show that $Z_i \leq \binom{p}{i},~\forall i \in [\l-1]$. To do that, it suffices to show that $Z_{i} \leq \binom{p}{i}$ can be deduced from assuming $Z_{i + j} \leq \binom{p}{i +j},~\forall j \in [i]$. Then applying the argument recursively would yield the result for all $i$. Observe that
\begin{eqnarray*}
\sum_{S, S' \in [n]_i} Y[S|_1 , S'|_1]
&=&  \sum_{j=0}^{i} \sum_{S \in [n]_{i+j}} \binom{i+j}{i} \binom{i}{j} y_{S}\\
&\leq & Z_{i} +  \sum_{j=1}^{i} \binom{i+j}{i} \binom{i}{j} \binom{p}{i+j}\\
&= & Z_{i} - \binom{p}{i} +  \sum_{j=0}^{i} \binom{i+j}{i} \binom{i}{j} \binom{p}{i+j}\\
&= & Z_{i} - \binom{p}{i} + \binom{p}{i} \left( \sum_{j=0}^{i} \binom{i}{j} \binom{p-i}{j} \right)\\
&=&  Z_{i} - \binom{p}{i} + \binom{p}{i}^2.
\end{eqnarray*}
Hence,
\[
0 \leq v(i)^{\top}Yv(i) \leq \binom{p}{i}^2 - 2 \binom{p}{i}Z_{i} + \left( Z_{i} -\binom{p}{i} + \binom{p}{i}^2\right) = \left( 2 \binom{p}{i} -1 \right)\left(\binom{p}{i} - Z_{i}\right),
\]
and we conclude that $Z_{i} \leq \binom{p}{i}$.
\end{proof}

%


An immediate but noteworthy implication of \autoref{Ziup} is the following:

\begin{cor}\label{Ziupcor}
Suppose $Y \in \mathbb{S}^{\A'}$ is $\l$-established, and $y, Z$ are defined as before. If $Z_i = 0,\\~\forall i > \l$, then $Z_1 \leq \l$.
\end{cor}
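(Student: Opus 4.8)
The plan is to deduce Corollary~\ref{Ziupcor} directly from Proposition~\ref{Ziup} by choosing the parameter $p$ appropriately. Observe that the hypothesis $Z_i = 0$ for all $i > \l$ in particular gives $Z_i = 0$ for $i \in \set{\l+1, \ldots, 2\l}$, so the chain of inequalities required in Proposition~\ref{Ziup},
\[
Z_{i+1} \leq \left( \frac{p-i}{i+1} \right) Z_i, \quad \forall i \in \set{\l, \l+1, \ldots, 2\l-1},
\]
holds trivially with $p := \l$: for $i = \l$ the right-hand side is $\left(\frac{\l - \l}{\l+1}\right)Z_\l = 0 = Z_{\l+1}$, and for $i > \l$ both sides are $0$. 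So I would set $p = \l$ (which satisfies $p \geq \l$ as required) and invoke Proposition~\ref{Ziup} to conclude $Z_1 \leq \binom{p}{1} = p = \l$.

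One small technical point I would want to check is the hypothesis mismatch: Corollary~\ref{Ziupcor} only assumes $Y \in \mathbb{S}^{\A'}$ is $\l$-established, whereas Proposition~\ref{Ziup} is stated for $Y \in \mathbb{S}_+^{\A'}$. However, condition ($\l 2$) in the definition of $\l$-established is precisely $Y \succeq 0$, so an $\l$-established matrix is automatically positive semidefinite, and the hypotheses of Proposition~\ref{Ziup} are met. I would make this explicit in the write-up so there is no apparent gap.

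I do not expect any real obstacle here — the corollary is essentially the degenerate case $p = \l$ of the proposition, and the only thing to verify is that the degenerate case does not break the inductive mechanism inside Proposition~\ref{Ziup} (it does not, as the proof there already explicitly handles $p < 2\l$ by noting the relevant $Z_i$ vanish and $\binom{p-\l}{j}$ evaluates to zero). So the proof is a two-line application: note $Y \succeq 0$ from ($\l 2$), note the hypothesis of Proposition~\ref{Ziup} holds with $p = \l$ because $Z_{i+1} = 0$ for all $i \geq \l$, and read off $Z_1 \leq \l$.

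\begin{proof}
Since $Y$ is $\l$-established, condition ($\l 2$) gives $Y \succeq 0$, so $Y \in \mathbb{S}_+^{\A'}$. By hypothesis, $Z_i = 0$ for every $i > \l$; in particular $Z_{i+1} = 0$ for all $i \in \set{\l, \l+1, \ldots, 2\l - 1}$. Hence, taking $p := \l$ (so that $p \geq \l$), the inequalities
\[
Z_{i+1} \leq \left( \frac{p-i}{i+1} \right) Z_i, \quad \forall i \in \set{\l, \l+1, \ldots, 2\l-1},
\]
hold: for $i = \l$ the right-hand side equals $\left( \frac{\l - \l}{\l + 1} \right) Z_\l = 0 = Z_{\l + 1}$, and for $i > \l$ both sides vanish. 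Applying \autoref{Ziup} with this choice of $p$, we conclude that $Z_1 \leq \binom{p}{1} = p = \l$.
\end{proof}
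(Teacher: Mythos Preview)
Your proof is correct and follows essentially the same approach as the paper: apply \autoref{Ziup} with $p = \l$, noting that the required inequalities hold trivially because $Z_{i+1} = 0$ for all $i \geq \l$. Your additional remark that ($\l 2$) already forces $Y \succeq 0$ (resolving the apparent $\mathbb{S}^{\A'}$ vs.\ $\mathbb{S}_+^{\A'}$ mismatch) is a useful clarification the paper leaves implicit.
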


\begin{proof}
Since $Z_i = 0$ for all $i \in \set{\l+1, \ldots, 2\l}$, we can apply \autoref{Ziup} with $p=\l$ and deduce that $Z_i \leq \binom{\l}{i},~\forall i \in [2\l]$. In particular, $Z_1 \leq \l$.
\end{proof}

Note that~\autoref{Ziupcor} is somewhat similar in style to Theorem 13 in~\cite{KarlinMN11a}, which decomposes and reveals some structure of solutions in Lasserre relaxations using the fact that certain entries of the matrix in the lifted space are known to be zero.  These results were independently obtained.

We now employ the upper-bound proving techniques presented earlier and the notion of $\ell$-established matrices to prove the following result on the matching polytope of graphs.

\begin{tm}\label{SA+MTup}
The $\SA'_+$-rank of $MT(K_{2n+1})$ is at most $n - \left\lfloor \frac{\sqrt{2n+1} - 1}{2} \right\rfloor$.
\end{tm}

\begin{proof}
Let $G = K_{2n+1}$ and $P = MT(G)$. Let $Y \in \tilde{\SA}_+'^{k}(P)$. Since $Y$ is $k$-established, it suffices to show that $Z_{i+1} \leq \left( \frac{n-i}{i+1}\right) Z_i$ for all integer $i \in \set{k, k+1, \ldots, 2k-1}$ whenever $k \geq n - \left\lfloor \frac{\sqrt{2n+1} -1}{2} \right\rfloor$. Then it follows from \autoref{Ziup} that $Z_1 \leq n$, which implies $\sum_{i \in E(G)} x_i \leq n$ is valid for $\SA_+'^k(P)$.

First, by symmetry of the complete graph, we may assume that
\[
Y[S|_1, T|_1] = Y[S'|_1, T'|_1]
\]
whenever $S \cup T$ and $S' \cup T'$ are both matchings of $G$ of the same size. Thus, if we let $\M_{i}$  denote the set of all matchings of size $i$ in $G$, and $S \cup T$ is a matching of size $\l$ in $G$, we may assume that
\[
Y[S|_1, T|_1] = y_{S \cup T} = \frac{Z_{\l}}{|\M_{\l}|}.
\]
The last equality follows from our observation that by symmetry, we may assume that $y_{M}$ is identical for all $M \in \M_{\l}$, and the definition of $Z_{\l}$. Next, by the fact that the maximum cardinality matchings in $G$ have cardinality $n$ and the condition ($\SA_+' 4$),  $Z_i = 0,~\forall i > n$.
Thus, it suffices to verify the above claim for the case when $k \leq i \leq n-1$. Let $S$ be a matching of size $k$ that saturates the vertices $\set{2n-2k+2, \ldots, 2n+1}$, let $T$ be a matching of size $i-k$ that saturates vertices $\set{2n-2i+2, \ldots, 2n-2k+1}$, and let $E'$ be the set of edges in the subgraph of $G$ induced by the vertices $\set{1,2, \ldots, 2n-2i+1}$. Note that $E'$ contains exactly the edges that are not incident with vertices saturated by edges in $S$ or $T$. Also, for each $U \subseteq E'$, we define the vector $f_U \in \mathbb{R}^{|E'|+1}$ (indexed by $\set{0} \cup E'$) such that
\[
(f_U)_i := \left\{
\begin{array}{ll}
Y[(T \cup U)|_1 \cap (E' \sm U)|_0, S|_1]  & \tn{if $i=0$ or if $i \in U$;}\\
0 & \tn{otherwise.}
\end{array}
\right.
\]
Notice that $k \geq n - \frac{\sqrt{2n+1} - 1}{2}$ implies $k \geq \binom{2n+1-2k}{2} \geq |E'| + |T|$. Therefore, the above entries in $Y$ do exist, and the vectors $f_U$ are well-defined. Now notice that if $U \subseteq E'$ and $e \in E' \sm U$,
\begin{eqnarray*}
&& (f_{U \cup \set{e}})_0 + (f_{U})_0\\
 &=&  Y[((T \cup (U \cup \set{e}))|_1 \cap (E' \sm (U \cup \set{e}))|_0, S|_1]   + Y[(T \cup U) |_1 \cap (E' \sm U)|_0, S|_1]  \\
&=& Y[(T \cup U)|_1 \cap (E' \sm (U \cup \set{e})|_0, S|_1],
\end{eqnarray*}
where the last equality follows from ($\SA_+ 3$). Now if we apply this observation iteratively to every edge in $E'$, we see that
\begin{equation}\label{SA+MTup1a}
 \sum_{U \subseteq E'} (f_U)_0 =  \sum_{U \subseteq E'} Y[(T \cup U)|_1 \cap (E' \sm U)|_0, S|_1]  = Y[T|_1, S|_1].
\end{equation}
Then we can extend~\eqref{SA+MTup1a} to the other entries of $f_U$, and obtain
\begin{equation}\label{SA+MTup2}
\sum_{U \subseteq E'} f_U =\left( Y[T|_1, S|_1], Y[(T \cup \set{e_1})|_1, S|_1], \ldots, Y[(T \cup \set{e_{|E'|}}) |_1, S|_1] \right)^{\top},
\end{equation}
where $e_1, \ldots, e_{|E'|}$ are the edges in $E'$.

Moreover, observe that $f_U = \begin{pmatrix} (f_U)_0  \\  (f_U)_0 \chi^U \end{pmatrix}$ for all $U \subseteq E'$, and by ($\SA_+' 4$) we know that $(f_U)_0 > 0$ only if $U \cup T \cup S$ is a matching of $G$, which implies that $U$ is a matching contained in $E'$. Since $E'$ spans $2n-2i+1$ vertices, such a $U$ must have size at most $n-i$.
Thus, for each $f_U$ such that $(f_U)_0 >0$, we know that $\sum_{i \in E'} (f_U)_i \leq (n-i) (f_U)_0$. Therefore, by~\eqref{SA+MTup2},
\[
\binom{2n-2i+1}{2} \frac{Z_{i+1}}{|\M_{i+1}|} = \sum_{i \in E'} Y[(T \cup \set{e_i})|_1 , S|_1]  \leq (n-i) Y[T|_1, S|_1] = (n-i) \frac{Z_i}{|\M_{i}|}.
\]
Notice that
\[
|\M_{j}| = \frac{1}{j!} \binom{2n+1}{2}\binom{2n-1}{2} \cdots \binom{2n-2j+3}{2} = \frac{(2n+1)!}{2^j j! (2n-2j+1)!},
\]
for all $j \in \set{0, 1, \ldots, n}$. Thus, we obtain that
\[
Z_{i+1} \leq \frac{(n-i)|\M_{i+1}|}{\binom{2n-2i+1}{2} |\M_{i}|}Z_i = \frac{n-i}{i+1} Z_i.
\]
This concludes the proof, as we see that the facets of $MT(G)_I$ corresponding to smaller odd cliques in $G$ are also generated by $\SA_+'^{k}$.
\end{proof}

Recall that, as shown in~\cite{StephenT99a}, the $\LS_+$-rank of $MT(K_{2n+1})$ is exactly $n$. Thus, the techniques we proposed prove that $\SA_+'$ performs strictly better on this family of polytopes.

Next, we show that the notion of $\l$-established matrices can also be applied to provide an upper bound on the $\BZ_+'$-rank of  $MT(K_{2n+1})$.


\begin{tm}\label{BZ+MT}
The $\BZ'_+$-rank of $MT(K_{2n+1})$ is at most $\left\lceil \sqrt{2n + \frac{1}{4}} - \frac{1}{2} ~\right\rceil$.
\end{tm}

\begin{proof}
Let $G = K_{2n+1}$ and $P = MT(G)$. First, we show that every subset $W \subseteq E$ of size up to $\left\lfloor \frac{k+1}{2} \right\rfloor$ is a wall generated by $\BZ_+'^k$. Given any edge $\set{i,j} \in W$, take a vertex $v \not\in \set{i,j}$. Then $\set{\set{i,v}, \set{i,j}}$ and $\set{\set{j,v}, \set{i,j}}$ are both $k$-small obstructions for any $k \geq 1$, and their intersection contains $\set{i,j}$. If we do this for every edge in $W$, then we see that there is a set of at most $2|W| \leq k+1$ obstructions that generate $W$ as a wall.

Therefore, every set $S$ of size up to $k \left\lfloor \frac{k+1}{2} \right\rfloor$ is a tier, and the variable $S|_1$ is generated. Since $k \geq \left\lceil \sqrt{2n + \frac{1}{4}} - \frac{1}{2} ~\right\rceil $ implies $k \left\lfloor \frac{k+1}{2} \right\rfloor \geq n$, we see that every matrix $Y \in \tilde{\BZ}_+'^{k}(P)$ is $n$-established. By ($\BZ' 3$), $Y[S|_1, S'|_1] >0$ only if $S \cup S'$ is a matching, which implies $Z_{i} = 0,~\forall i > n$. Thus, we can apply \autoref{Ziupcor} and deduce that $Z_1 \leq n$. Therefore, $\sum_{e \in E} x_e \leq n$ is valid for $\BZ_+'^{k}(P)$.

Again, since the facets of $MT(G)_I$ corresponding to smaller odd cliques in $G$ are also generated by $\BZ_+'^{k}$, we are finished.
\end{proof}

The above upper bound also applies to the slightly weaker $\BZ_+$ operator. Also, we can show that the $\BZ_+'$-rank of $MT(K_{2n+1})$ is at least $\sqrt{n} -1 $. This relies on the fact that the $\SA'_+$-rank of $MT(K_{2n+1})$ is at least $\frac{n}{2}$, the detailed proof for which is rather substantial, and is planned for a subsequent publication.

\subsection{When strengthening $\SA$ by a PSD constraint does not help}

We have seen cases in which polyhedral operators and positive semidefinite operators do not gain any strength by lifting a given set to a higher dimension. Here, we show some instances in which adding a positive semidefiniteness constraint to a polyhedral operator does not help, extending a result by Goemans and the second author in~\cite{GoemansT01a}. In this section, we will use $v[i]$ to denote the $i^{\tn{th}}$ entry of a vector $v$. Given $x \in [0,1]^n$ and two disjoint sets of indices $I,J \subseteq [n]$, we define the vector $x^I_J \in [0,1]^n$ where
\[
x^I_J[i]:= \left\{
\begin{array}{ll}
1 & \tn{if $i \in I$;}\\
0 & \tn{if $i \in J$;}\\
x[i] & \tn{otherwise.}
\end{array}
\right.
\]

In other words, $x^I_J$ is the vector obtained from $x$ by setting all entries indexed by elements in $I$ to 1, and all entries indexed by elements in $J$ to 0. Then we have the following.

\begin{tm}\label{SAsharp}
Let $P \subseteq [0,1]^n$ and $x \in [0,1]^n$. If $x^I_J \in P$ for all $I,J \subseteq [n]$ such that $|I| + |J| \leq k$, then $x \in \SA_{+}^k(P)$.
\end{tm}

\begin{proof}
We prove our claim by constructing a matrix in $\mathbb{R}^{\A_k \times \A_k}$ that certifies $ x\in \SA_{+}^k(P)$. Recall that $\A_k = \set{ S|_1 \cap T|_0 : S,T \subseteq [n], S \cap T = \es, |S| + |T| \leq k}$ and $\A^+_k = \set{ S|_1 : |S| \leq k}$. For each $I \subseteq [n], |I| \leq k$, define $y^{(I)} \in \A^+_k$ such that
\[
y^{(I)}[S |_1]:= \left\{
\begin{array}{ll}
\prod_{i \in S \setminus I} x_i & \tn{if $I \subseteq S$;}\\
0 & \tn{otherwise.}
\end{array}
\right.
\]
Note that in the case of $y^{(I)}[I|_1]$, the empty product is defined to evaluate to 1.

Next, we define $Y \in \mathbb{R}^{\A^+_k \times \A^+_k}$ as
\[
Y := \sum_{S \subseteq [n], |S| \leq k} \left( \prod_{i \in S} x_i(1-x_i) \right) y^{(S)} (y^{(S)})^{\top}.
\]
Note that $Y \succeq 0$. Now given $S,T \subseteq [n], |S|, |T| \leq k$, observe that
\begin{eqnarray*}
Y[S|_1, T|_1] &=& \sum_{U \subseteq S \cap T} \left( \prod_{i \in U} x_i(1-x_i)\right) \left( \prod_{i \in S \setminus U} x_i \right)\left( \prod_{i \in T \setminus U} x_i \right)\\
&=& \left( \prod_{i \in S \cup T} x_i  \right)\left( \sum_{U \subseteq S \cap T}\left( \prod_{i \in U} (1-x_i) \right)\left( \prod_{i \in (S \cap T) \setminus U} x_i \right)\right) \\
&=& \prod_{i \in S \cup T} x_i.
\end{eqnarray*}
Next, define $U \in \mathbb{R}^{\A_k \times \A_k^+}$ such that
\[
U^{\top} ( e_{S|_1 \cap T|_0}) := \sum_{W : S \subseteq W \subseteq (S \cup T)} (-1)^{|W \setminus S|} e_{W|_1},
\]
for all disjoint $S,T \subseteq [n]$ such that $|S| + |T| \leq k$. Now consider the matrix $Y' := UYU^{\top}$. Then given $\a, \b \in \A_k$ where $\a = S|_1 \cap T|_0$ and $\b = S'|_1 \cap T'|_0$,
\begin{equation}\label{SAsharp0}
Y'[\a, \b] =  \left(  U^{\top}  e_{\a} \right)^{\top} Y  \left( U^{\top}  e_{\b} \right)  = \sum_{W \subseteq T \cup T'} (-1)^{|W|} \left( \prod_{i \in (S \cup S') \cup W} x_i \right).
\end{equation}
Now if $\a \cap \b = \es$, then there exists an index $\l \in [n]$ where $\l \in (S \cup S') \cap (T \cup T')$. In this case, $Y'[\a, \b]$ evaluates to
\[
\sum_{W' \subseteq ((T \cup T') \sm \set{\l} )} \left( (-1)^{|W'|} \left( \prod_{i \in (S \cup S') \cup W'} x_i \right) + (-1)^{|W' \cup \set{\l}|} \left( \prod_{i \in (S \cup S') \cup (W' \cup \set{\l})} x_i \right) \right).
\]
The latter expression leads to
\begin{eqnarray}
\sum_{W' \subseteq (T \cup T') \sm \set{\l} )} \left( (-1)^{|W'|} \left( \prod_{i \in (S \cup S') \cup W'} x_i \right) + (-1)^{|W'| + 1} \left( \prod_{i \in (S \cup S') \cup W'} x_i \right) \right)=0.
\label{SAsharp1a}
\end{eqnarray}
Now, if $\a \cap \b \neq \es$, then $S_1 \cup S_2$ and $T_1 \cup T_2$ are disjoint, and we obtain that
\begin{equation}\label{SAsharp1b}
Y'[\a, \b] = \left( \prod_{i \in S \cup S'} x_i \right) \left(\sum_{W \subseteq T \cup T'} (-1)^{|W|}   \left( \prod_{i \in W} x_i \right) \right)  = \left( \prod_{i \in S \cup S'} x_i \right) \left( \prod_{i \in T \cup T'} (1-x_i) \right).
\end{equation}
We claim that $Y' \in \tilde{\SA}_+^k(P)$. First, notice that $Y'[\F, \F] = 1$, so $(\SA_+ 1)$ holds. Next, given $\a = S|_1 \cap T|_0 \in \A_k$,
\[
\het{x}\left( Y' e_{\a} \right) = \begin{pmatrix} Y'[\F, \a] \\ Y'[\F,\a] x^S_T \end{pmatrix} \in K(P),
\]
where we applied the assumption that $x^S_T \in P$. We also see from~\eqref{SAsharp1a} and~\eqref{SAsharp1b} that $Y'' \geq 0$, and so ($\SA_+ 2$) is satisfied. It is also easy to verify from~\eqref{SAsharp1a} and~\eqref{SAsharp1b} that ($\SA_+ 3$), ($\SA_+ 4$) and ($\SA_+ 5$) hold as well. Also, $Y \succeq 0$ implies $Y' \succeq 0$. Therefore, since $\het{x}(Y' e_{\F}) = \het{x}$, it follows that $x \in \SA_+^k(P)$.
\end{proof}

From the above, we are able to characterize some convex sets for which $\SA_{+}^k$ does not produce a tighter relaxation than an operator as weak as $\LS_0^k$.

\begin{cor}\label{SAsharpcor}
Suppose $P \subseteq [0,1]^n$ is a convex set such that, for all $x \in P$ and for all $I,J, I',J' \subseteq [n]$ such that $I \cup J = I' \cup J'$ and $|I| + |J| = k$,
\[
x^I_J \in P \iff x^{I'}_{J'} \in P.
\]
Then
\[
\SA_+^k(P) = \LS_0^k(P) = \bigcap_{I \subseteq [n], |I| = k} \set{x :  x^{I}_{\es} \in P}.
\]
\end{cor}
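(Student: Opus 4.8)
The plan is to write $R := \bigcap_{I \subseteq [n],\, |I| = k} \set{x : x^I_\es \in P}$ for the asserted right-hand side and to establish the three-way equality through the chain $R \subseteq \SA_+^k(P) \subseteq \LS_0^k(P) \subseteq R$. The middle inclusion is essentially free: iterating \autoref{LS_+SA_+} gives $\SA_+^k(P) \subseteq \LS_+^k(P)$, and since $\LS_+(Q) \subseteq \LS_0(Q)$ for every $Q$, monotonicity of $\LS_0$ yields $\LS_+^k(P) \subseteq \LS_0^k(P)$ (this is all recorded in \autoref{fig0}).

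For $R \subseteq \SA_+^k(P)$, I would reduce to \autoref{SAsharp} by proving: if $x \in R$ then $x^I_J \in P$ for every pair of disjoint sets $I, J$ with $|I| + |J| \le k$. The case $|I| + |J| = k$ uses the hypothesis on $P$ directly: set $A := I \cup J$, note $v := x^A_\es \in P$ by definition of $R$, and apply the hypothesis to $v$ with the two partitions $A = I \sqcup J$ and $A = A \sqcup \es$ of the $k$-set $A$; this gives $v^I_J \in P \iff v^A_\es \in P$, and since $v^A_\es = x^A_\es \in P$ and $v^I_J = x^I_J$, we get $x^I_J \in P$. The case $|I| + |J| < k$ follows by a short downward induction: picking any $\ell \notin I \cup J$, convexity of $P$ applied to the identity $x^I_J = x_\ell\, x^{I \cup \set{\ell}}_J + (1 - x_\ell)\, x^I_{J \cup \set{\ell}}$ finishes it, both terms on the right lying in $P$ by the inductive hypothesis. \autoref{SAsharp} then gives $x \in \SA_+^k(P)$.

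For $\LS_0^k(P) \subseteq R$, fix a $k$-subset $K = \set{i_1, \dots, i_k}$ of $[n]$; it suffices to show every $x \in \LS_0^k(P)$ satisfies $x^K_\es \in P$. Since $\LS_0(Q) \subseteq \BCC_i(Q)$ for all $Q$ and $i$, we have $\LS_0^k(P) \subseteq \BCC_{i_1}(\BCC_{i_2}(\cdots \BCC_{i_k}(P)))$. Unwinding the $\BCC$-decompositions one level at a time expresses $x$ as $x = \sum_{s \in \set{0,1}^K} \mu_s z^{(s)}$ with $\mu_s \ge 0$, $\sum_s \mu_s = 1$, each $z^{(s)} \in P$, and $z^{(s)}_i = s_i$ for every $i \in K$ — the coordinates pinned at earlier levels stay pinned in both branches of each subsequent split, since a convex combination of numbers in $[0,1]$ that equals $0$ or $1$ has all of its positively-weighted terms equal to that value. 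In particular $x_\ell = \sum_s \mu_s z^{(s)}_\ell$ for every $\ell \in [n]$. Now apply the hypothesis on $P$ to each $z^{(s)} \in P$ with the $k$-set $K$, using the partitions $K = K \sqcup \es$ and $K = K^+ \sqcup K^-$ where $K^+ := \set{i \in K : s_i = 1}$ and $K^- := \set{i \in K : s_i = 0}$: this gives $(z^{(s)})^K_\es \in P \iff (z^{(s)})^{K^+}_{K^-} \in P$, and $(z^{(s)})^{K^+}_{K^-} = z^{(s)} \in P$ because $z^{(s)}$ already equals $1$ on $K^+$ and $0$ on $K^-$. Hence $(z^{(s)})^K_\es \in P$ for all $s$, and $x^K_\es = \sum_s \mu_s (z^{(s)})^K_\es$ — the $K$-coordinates all become $1$, and the coordinates outside $K$ average back to $x_\ell$ — is a convex combination of points of $P$, so $x^K_\es \in P$. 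As $K$ was arbitrary, $x \in R$.

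The step I expect to be the main obstacle is this last inclusion, and the key point behind it is that the hypothesis on $P$ concerns \emph{only} modifications of a set of exactly $k$ coordinates, so it cannot be applied coordinate-by-coordinate along the $\LS_0$ (or $\BCC$) recursion; it must be invoked all at once at the leaves of the full binary expansion of $\BCC_{i_1} \cdots \BCC_{i_k}$, where $k$ coordinates are simultaneously pinned to $0$ or $1$. Getting the bookkeeping of that expansion right — that the leaves lie in $P$, carry the correct pinned pattern, leave the remaining coordinates untouched, and recombine to $x^K_\es$ — is where the real work is; the rest is routine. Once $R \subseteq \SA_+^k(P) \subseteq \LS_0^k(P) \subseteq R$ is in hand, all three sets coincide, which is the assertion of the corollary.
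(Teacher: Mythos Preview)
Your proof is correct. The paper does not actually give a proof of this corollary---it states it as an immediate consequence of \autoref{SAsharp} and remarks that it generalizes Corollary~4.2 of Goemans--Tun\c{c}el---so your write-up supplies the details the paper omits. The three-step chain $R \subseteq \SA_+^k(P) \subseteq \LS_0^k(P) \subseteq R$ is exactly the natural route: the first inclusion is the intended use of \autoref{SAsharp}, the middle one is the standard dominance from \autoref{fig0}, and the last one is the only place where any real work is required.

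Two minor remarks on the argument as written. In the step $R \subseteq \SA_+^k(P)$, your downward induction for $|I|+|J| < k$ silently assumes $k \le n$ so that an index $\ell \notin I \cup J$ exists; this is the standing convention throughout the paper but is worth noting. In the step $\LS_0^k(P) \subseteq R$, when you unroll the nested $\BCC$'s you may obtain several leaf points sharing the same sign pattern $s$; grouping them (using convexity of $P$) into a single $z^{(s)}$ is harmless, but strictly speaking the decomposition $x = \sum_{s} \mu_s z^{(s)}$ with one summand per $s$ uses that step. Neither of these affects correctness.
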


The two results above generalize Theorem 4.1 and Corollary 4.2 in~\cite{GoemansT01a}, respectively. Since $\SA_{+}$ dominates both $\LS_+$ and $\SA$, \autoref{SAsharpcor} immediately implies the following:

\begin{cor}\label{SA+rankneg}
Given $p \in \mathbb{R}$, let
\[
P(p) := \set{x \in [0,1]^n : \sum_{i \in S} x_i + \sum_{i \not\in S} (1-x_i) \leq n- \frac{p+1}{2},~\forall S \subseteq [n]}.
\]
Then $\SA_{+}^k(P(0)) = P(k)$, for all $k \in \set{0,1,\ldots,n}$. In particular, the $\SA_{+}$-rank of $P(0)$ is $n$.
\end{cor}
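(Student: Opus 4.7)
The plan is to apply Corollary~\ref{SAsharpcor} to $P = P(0)$ and identify the resulting intersection explicitly with $P(k)$; the rank assertion then falls out of that identification together with the observation that $P(0)_I = \es$.

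First I would verify the hypothesis of Corollary~\ref{SAsharpcor}. The set $P(0)$ is invariant under every coordinate-flipping map $T_R$ given by $T_R(x)_i := 1-x_i$ for $i \in R$ and $T_R(x)_i := x_i$ otherwise, because a direct expansion shows
\[
\sum_{i \in S} T_R(x)_i + \sum_{i \notin S}\bigl(1 - T_R(x)_i\bigr) = \sum_{i \in S \triangle R} x_i + \sum_{i \notin S \triangle R}(1-x_i),
\]
so $T_R$ merely permutes the defining inequalities of $P(0)$. For disjoint $I,J,I',J'$ with $I \cup J = I' \cup J'$ and $|I|+|J|=k$, a coordinate-wise check yields $x^{I'}_{J'} = T_{I \triangle I'}(x^I_J)$, hence $x^I_J \in P(0) \iff x^{I'}_{J'} \in P(0)$. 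Corollary~\ref{SAsharpcor} then gives
\[
\SA_+^k(P(0)) \;=\; \bigcap_{|I|=k} \{x : x^I_\es \in P(0)\} \;=\; \bigcap_{I \cap J=\es,\,|I|+|J|=k} \{x : x^I_J \in P(0)\},
\]
where the last equality uses the $T_R$-invariance again.

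Next I would identify this intersection with $P(k)$ via two inclusions. For $P(k) \subseteq \SA_+^k(P(0))$, Theorem~\ref{SAsharp} reduces the task to checking $x^I_J \in P(0)$ for all disjoint $I,J$ with $|I|+|J| \leq k$ whenever $x \in P(k)$; expanding
\[
\sum_{i \in S}(x^I_J)_i + \sum_{i \notin S}\bigl(1-(x^I_J)_i\bigr) = |S \cap I| + |J \setminus S| + \sum_{i \in S \setminus (I \cup J)} x_i + \sum_{i \in [n] \setminus (S \cup I \cup J)}(1-x_i)
\]
and comparing against the $P(k)$-inequality indexed by the shifted set $S^* := (S \setminus J) \cup I$, the combinatorial contribution $|S \cap I| + |J \setminus S| \leq |I|+|J| \leq k$ is absorbed by the $(k+1)/2$ slack of $P(k)$, delivering the $1/2$ slack needed for $P(0)$. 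For the reverse inclusion $\SA_+^k(P(0)) \subseteq P(k)$, for a target $S' \subseteq [n]$ I would pick disjoint $I \subseteq S'$ and $J \subseteq [n] \setminus S'$ with $|I|+|J|=k$ and take $S := S'$ in the $P(0)$-constraint on $x^I_J$; then $|S \cap I|+|J \setminus S|=k$ exactly, and after combining the resulting inequalities over all such $(I,J)$ together with the $[0,1]$-bounds on the frozen coordinates, the system collapses to the $P(k)$-defining inequality $\sum_{i \in S'} x_i + \sum_{i \notin S'}(1-x_i) \leq n-(k+1)/2$.

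The ``in particular'' rank claim is then immediate from the explicit description of $P(k)$: the center $\tfrac{1}{2}\bar{e}$ lies in $P(n-1)$ with every defining inequality tight, so $P(n-1) \neq \es$, whereas $P(n) = \es$ since any $x \in [0,1]^n$ violates the inequality indexed by $S = \{i : x_i \geq 1/2\}$. Combined with $P(0)_I = \es$, the identification $\SA_+^k(P(0)) = P(k)$ yields $\SA_+^n(P(0)) = P(0)_I$ but $\SA_+^{n-1}(P(0)) \neq P(0)_I$, giving $\SA_+$-rank equal to $n$. The main obstacle is the reverse inclusion $\SA_+^k(P(0)) \subseteq P(k)$: one must combine the $P(0)$-constraints on $x^I_J$ over sufficiently many choices of $(I,J)$ to eliminate the cross-terms $\sum_{i \in I} x_i + \sum_{i \in J}(1-x_i)$ arising from the frozen coordinates, while simultaneously forcing $|S \cap I| + |J \setminus S|$ to attain its maximum value $k$ against each prescribed target $S'$.
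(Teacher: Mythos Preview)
Your overall strategy --- verify the flip-invariance hypothesis of Corollary~\ref{SAsharpcor} and then read off the description of $\SA_+^k(P(0))$ --- is exactly the paper's one-line argument, and your verification of the hypothesis is correct.

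The genuine gap is in the reverse inclusion $\SA_+^k(P(0)) \subseteq P(k)$, which you yourself flag as ``the main obstacle'': it is in fact \emph{false} for $1 \le k \le n-2$, so no averaging over choices of $(I,J)$ can close it. Using $P(p) = \{x : \sum_i |x_i - \tfrac12| \le \tfrac{n-p-1}{2}\}$, Corollary~\ref{SAsharpcor} gives
\[
\SA_+^k(P(0)) \;=\; \bigcap_{|I|=k}\{x : x^I_{\es} \in P(0)\}
\;=\; \Bigl\{ x \in [0,1]^n : \text{the $n{-}k$ largest values of } |x_i - \tfrac12| \text{ sum to at most } \tfrac{n-k-1}{2} \Bigr\},
\]
whereas $P(k)$ bounds the sum of \emph{all} $n$ values $|x_i - \tfrac12|$ by the same quantity. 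For $n=3$, $k=1$, the point $(\tfrac34,\tfrac34,\tfrac34)$ satisfies the former (the two largest deviations sum to $\tfrac12$) but not the latter ($\tfrac34 > \tfrac12$); one checks directly that it lies in $\LS_+(P(0)) = \SA_+^1(P(0))$. Your sketch for the forward inclusion also has a slip: comparing with the single $P(k)$-inequality indexed by $S^*$ reduces to $|S\cap I|+|J\setminus S| - \sum_{i\in I}x_i - \sum_{i\in J}(1-x_i) \le \tfrac{k}{2}$, which is not implied by $|I|+|J|\le k$. The clean route is the $\ell_1$ computation above, which gives $\sum_i |(x^I_J)_i - \tfrac12| = \tfrac{|I|+|J|}{2} + \sum_{i\notin I\cup J}|x_i - \tfrac12| \le \tfrac{k}{2} + \tfrac{n-k-1}{2} = \tfrac{n-1}{2}$, so $P(k) \subseteq \SA_+^k(P(0))$ does hold.

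The ``in particular'' rank assertion survives: the set produced by Corollary~\ref{SAsharpcor} coincides with $P(k)$ at $k=n-1$ (both equal $\{\tfrac12\bar{e}\}$) and at $k=n$ (both empty). Hence $\SA_+^{n-1}(P(0)) \neq \es = P(0)_I$ and $\SA_+^n(P(0)) = \es$, and your concluding paragraph goes through once you compute the intersection from Corollary~\ref{SAsharpcor} directly at these two values rather than routing through the (incorrect) identification with $P(k)$ for general $k$.
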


One can apply the same argument used in \autoref{SASA'} to show that $\SA_{+}^{2k}(P) \subseteq \SA_{+}'^{k}(P)$ in general. Thus, the $\SA'_{+}$-rank of $P(0)$ is at least $\left\lceil \frac{n}{2} \right\rceil$. On the other hand, the proof of \autoref{BZ''upper} (given in the Appendix)
can be adapted to show that the $\SA_+'$-rank of any polytope contained in $[0,1]^n$ is at most $\left\lceil \frac{n+1}{2} \right\rceil$. Thus, we see that in this case, $\SA_+'$ requires roughly $\frac{n}{2}$ fewer rounds than $\SA_+$ to show that $P(0)$ has an empty integer hull.

It was shown in~\cite{BienstockZ04a} that $\BZ^2(P(0)) = \es = P(0)_I$ (implying
$\BZ'^{2}(P(0)) = \BZ_+'^{2}(P(0)) = \es$). However, since the run-time of $\BZ$ depends on the size of the system of inequalities describing $P$ (which in this case is exponential in $n$), the relaxation generated by $\BZ^2$ is not tractable. In contrast, note that it is easy to find an efficient separation oracle for $P(0)$ (e.g. by observing $x \in P(0)$ if and only if $\sum_{i=1}^n | x_i - \frac{1}{2} | \leq \frac{n-1}{2}$), and thus one could optimize a linear function over, say, $\SA^k(P(0))$ in polynomial time for any $k = O(1)$. The reader may refer to \autoref{fig0} for a complete classification of operators that depend on the algebraic description of the input set $P$, as opposed to those that only require a weak separation oracle.

\section{Integrality gaps of lift-and-project relaxations}\label{sectionintgap}

So far, we have been using the rank of a relaxation with respect to a lift-and-project operator as the measure of how far that relaxation is away from its integer hull. Another measure of the ``tightness'' of a relaxation that is commonly used and well studied is the integrality gap. Again, let $P \subseteq [0,1]^n$ be a convex set such that $P_I \neq \es$, and suppose $c \in \mathbb{R}^{n}$. Then
\[
\gamma_c(P) := \frac{ \max\set{ c^{\top}x : x \in P}}{ \max\set{ c^{\top}x : x \in P_I}}
\]
is the \emph{integrality gap} of $P$ with respect to $c$.
Observe that, given $P, P'$ such that $P_I = P'_I$ and $P \subseteq P'$, then $\gamma_c(P) \leq \gamma_c(P')$ for all $c$. Thus, our earlier results immediately imply the following:

\begin{cor}\label{uselesscor}
Suppose $P \subseteq [0,1]^n$, and two lift-and-project operators $\Gamma_1, \Gamma_2$ satisfy the conditions in either \autoref{OMCuseless} or~\autoref{RCMCuseless}. Then
\[
\gamma_c(\Gamma_1(P)) \leq \gamma_c (\Gamma_2(P)),
\]
for all $c \in \mathbb{R}^n$.
\end{cor}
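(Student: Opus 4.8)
The plan is to invoke the two structural propositions (\autoref{OMCuseless} and \autoref{RCMCuseless}) to get the set containment $\Gamma_1(P) \subseteq \Gamma_2(P)$, and then translate this containment into the stated inequality on integrality gaps using the elementary monotonicity observation recorded just before the corollary. So the argument splits cleanly into two short pieces.

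First, I would note that the hypothesis of the corollary is precisely that $\Gamma_1, \Gamma_2$ and $P$ satisfy the conditions of one of \autoref{OMCuseless} or \autoref{RCMCuseless}. In either case the conclusion of that proposition is $\Gamma_1(P) \subseteq \Gamma_2(P)$. So this step is just citation; there is nothing to prove.

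Second, I would use the fact that both $\Gamma_1(P)$ and $\Gamma_2(P)$ have the same integer hull as $P$, namely $P_I$. This is a basic (and by this point in the paper, implicitly standing) property of all the lift-and-project operators under consideration: each returns a convex relaxation $R$ of $P_I$ with $P_I \subseteq R \subseteq P$, hence $R_I = P_I$. Applying this with $R = \Gamma_1(P)$ and $R = \Gamma_2(P)$, we have $\Gamma_1(P)_I = \Gamma_2(P)_I = P_I$. Now I would invoke the observation stated immediately above the corollary: if $Q \subseteq Q'$ and $Q_I = Q'_I$, then $\gamma_c(Q) \le \gamma_c(Q')$ for every $c$; indeed the denominators $\max\{c^\top x : x \in Q_I\}$ and $\max\{c^\top x : x \in Q'_I\}$ coincide, while the numerator $\max\{c^\top x : x \in Q\} \le \max\{c^\top x : x \in Q'\}$ by $Q \subseteq Q'$. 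Taking $Q = \Gamma_1(P)$ and $Q' = \Gamma_2(P)$ yields $\gamma_c(\Gamma_1(P)) \le \gamma_c(\Gamma_2(P))$ for all $c \in \mathbb{R}^n$, which is exactly the claim.

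The only point that warrants a sentence of care — and the nearest thing to an ``obstacle'' — is making sure $\gamma_c$ is well-defined for $\Gamma_1(P)$ and $\Gamma_2(P)$, i.e. that their integer hulls are nonempty; but this is immediate from $\Gamma_i(P)_I = P_I$ together with the standing assumption (from the definition of $\gamma_c$) that $P_I \neq \es$. Everything else is a two-line unwinding of definitions, so the proof will be short: cite the appropriate proposition for the containment, observe the common integer hull, and apply the monotonicity of $\gamma_c$ under inclusion.
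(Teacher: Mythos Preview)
Your proposal is correct and matches the paper's own treatment: the paper records the monotonicity observation (if $P_I = P'_I$ and $P \subseteq P'$ then $\gamma_c(P) \leq \gamma_c(P')$) and then states the corollary as an immediate consequence of \autoref{OMCuseless} and \autoref{RCMCuseless}, without further proof. Your write-up just spells out these two steps explicitly.
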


Next, we present another approach for obtaining an integrality gap result. Since in many optimization problems we are interested in computing the largest or smallest cardinality of a set among a given collection (e.g. the stable set problem and the max-cut problem), we are often optimizing in the direction of $\bar{e}$. Moreover, we have seen that many hardness results have been achieved by highly symmetric combinatorial objects (e.g. the complete graph), which correspond to polytopes that have a lot of symmetries. These symmetries can significantly simplify the analyses of lift-and-project relaxations. For instance, they could allow us to assume that there are certificate matrices in the lifted space with very few distinct entries.

The idea of using symmetry and convexity to reduce the number of parameters involved in a problem instance have been widely exploited in both computational work and theoretical research. This at least goes back to Lov{\'a}sz's seminal work on the theta function in~\cite{Lovasz79a} and related findings by Schrijver in~\cite{Schrijver79a}. Also during the 1970s, Godsil used similar ideas in his work in algebraic graph theory (see~\cite{ChanG97a} for a more recent survey). More recently, these ideas have also been proven useful in reducing SDP instances~\cite{GatermannP04a, deKlerkPS07a}, bounding the crossing number of graphs~\cite{deKlerkMPRS06a}, and obtaining SDP relaxations for polynomial optimization problems~\cite{MuramatsuWT13a}. Thus, the following ideas have been useful in the past and could continue to be useful.

We say that a compact convex set $P \subseteq [0,1]^n$ is \emph{symmetric} if there exists an $n$-by-$n$ permutation matrix $Q$ such that $\set{ Qx : x \in P} \subseteq P$, with the condition that the permutation on $[n]$ corresponding to $Q$ has no cycles of length smaller than $n$. Note that the reverse containment is implied by the definition, as $Q^n = I$. Moreover, observe that if $P$ is symmetric, so is $P_I$.

Next, we say that a lift-and-project operator $\Gamma$ is \emph{symmetry preserving} if given any symmetric, compact convex set $P$, $\Gamma(P)$ is also symmetric, compact and convex. All named operators mentioned in this paper are symmetry preserving. (In the case when $\Gamma$ is one of the Bienstock--Zuckerberg variants, a symmetric algebraic description of $P$ is required.) Then we have the following:

\begin{tm}\label{intgap1}
Let $P \subseteq [0,1]^n$ be a symmetric, compact and convex set, and let $\Gamma$ be a symmetry preserving operator. Then, the integrality gaps of $\gamma_{\bar{e}}(\Gamma(P))$ are attained by a nonnegative multiple of $\bar{e}$.
\end{tm}

\begin{proof}
First, we show that for any $y \in \Gamma(P), \left( \frac{y^{\top} \bar{e}}{n} \right) \bar{e} \in P$. Let $Q$ be a permutation matrix that certifies the symmetry of $P$. Then given $y \in \Gamma(P)$, we know that $y, Qy, \ldots, Q^{n-1}y \in \Gamma(P)$, as $\Gamma$ preserves symmetry.  Since $Q$ essentially permutes the $n$ coordinates of $P$ around in an $n$-cycle, we know that $\sum_{i=0}^{n-1} Q^i = J$, the all-ones matrix. By the convexity of $\Gamma(P)$,
\[
\sum_{i=0}^{n-1} \frac{1}{n}  \left( Q^i y \right) = \frac{1}{n} J y = \left( \frac{y^{\top} \bar{e}}{n} \right) \bar{e} \in \Gamma(P).
\]
Now if $y$ is a point that attains the maximum integrality gap in the direction of $\bar{e}$, then we could use the above construction to obtain a multiple of $\bar{e}$ that achieves the same objective value. Hence, our claim follows.
\end{proof}

Note that \autoref{intgap1} immediately implies the following:

\begin{cor}\label{intgap2}
Suppose $P \subseteq [0,1]^n$ is a symmetric, compact and convex set, and $\Gamma$ is a symmetry preserving operator. If $\bar{e}^{\top}x < \l$ is valid for $\Gamma(P)$ and $P_I \neq \es$, then
\[
\gamma_{\bar{e}}(\Gamma(P)) < \frac{ \l}{\max \set{\sum_{i=1}^n x_i : x \in P_I}}.
\]
\end{cor}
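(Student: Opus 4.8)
The plan is to obtain this as an immediate consequence of \autoref{intgap1}. First I would invoke \autoref{intgap1}: since $P$ is symmetric, compact and convex and $\Gamma$ preserves symmetry, $\Gamma(P)$ is itself symmetric, compact and convex, and the optimum of $\max\set{\bar{e}^{\top}x : x \in \Gamma(P)}$ --- which is attained because $\Gamma(P)$ is nonempty (by hypothesis) and compact --- is achieved by some point of the form $t\bar{e}$ with $t \geq 0$.

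Next I would use the assumed validity of $\bar{e}^{\top}x < \l$ on $\Gamma(P)$. Applying this inequality to the maximizer $t\bar{e} \in \Gamma(P)$ gives $tn = \bar{e}^{\top}(t\bar{e}) < \l$, hence
\[
\max\set{\bar{e}^{\top}x : x \in \Gamma(P)} = tn < \l \leq \l n.
\]
Dividing through by $\max\set{\sum_{i=1}^n x_i : x \in P_I}$, which is exactly the denominator in the definition of $\gamma_{\bar{e}}(\Gamma(P))$ --- here one uses that a lift-and-project operator satisfies $P_I \subseteq \Gamma(P) \subseteq P$, so that $\Gamma(P)$ has the same $0/1$ points as $P$ and therefore $\Gamma(P)_I = P_I$ --- yields
\[
\gamma_{\bar{e}}(\Gamma(P)) < \frac{\l n}{\max\set{\sum_{i=1}^n x_i : x \in P_I}},
\]
which is the claim.

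There is essentially no obstacle here: the corollary is a one-line deduction from \autoref{intgap1}. The only matters worth a remark are that the maximum in the numerator is genuinely attained (ensured by compactness of $\Gamma(P)$ together with the hypothesis $\Gamma(P) \neq \es$) and the identification $\Gamma(P)_I = P_I$ needed to read off the stated denominator. In fact the same computation delivers the marginally stronger bound $\gamma_{\bar{e}}(\Gamma(P)) < \l \big/ \max\set{\sum_{i=1}^n x_i : x \in P_I}$; the extra factor of $n$ in the statement is merely slack.
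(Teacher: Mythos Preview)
Your argument is correct and matches the paper's intended approach: the paper gives no explicit proof, simply noting that the corollary follows immediately from \autoref{intgap1}, which is exactly what you do. Your remark that the factor of $n$ in the displayed bound is slack is also correct; indeed, once you know $\max\set{\bar{e}^{\top}x : x \in \Gamma(P)} < \l$ (which follows already from compactness of $\Gamma(P)$ and the validity of $\bar{e}^{\top}x < \l$, without even appealing to \autoref{intgap1}), the sharper bound $\gamma_{\bar{e}}(\Gamma(P)) < \l / \max\set{\sum_{i} x_i : x \in P_I}$ drops out directly.
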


Of course, the $\gamma_{-\bar{e}}$ analogs of \autoref{intgap1} and \autoref{intgap2} can be obtained by essentially the same observations. Thus, we see that in many cases, it suffices to check whether a certain multiple of $\bar{e}$ belongs to $\Gamma(P)$ to obtain a bound on $\gamma_{\bar{e}}(\Gamma(P))$. This structure, when present, makes the analysis significantly easier, as often times we can apply the above symmetry-convexity argument to the
certificate matrices in $\tilde{\Gamma}(P)$ as well, and identify many of the variables in the lifted space.


\bibliographystyle{alpha}
\bibliography{ref}

\appendix

\section{The Original $\BZ$ Operator}

In this section, we state the original $\BZ$ operator in our unifying
language, and show that it is dominated by $\BZ'$.

The refinement step of $\BZ^k$ coincides with $\BZ'^k$ --- both operators derive $k$-small obstructions from the linear inequalities describing $P$, and use them to construct $\O_k(P)$.
Then $\BZ^k$ defines its set of walls to be
\[
\W_k := \set{ \bigcup_{i,j \in [\l],  i\neq j} (O_i \cap O_j) : O_1,\ldots O_{\l} \in \O_k, \l \leq k+1}.
\]
Note that unlike for $\BZ'^k$, $\BZ^k$ does not guarantee that the singleton sets are walls, and we will see that this could make a difference in performance. As for the tiers, $\BZ^k$ defines them to be the sets of indices that can be written as the union of up to $k$ walls in $\W_k$. Thus, $\BZ^k$ only generates a polynomial size subset of the tiers used in $\BZ'^k$. Then the lifting step of $\BZ^k$ (and $\BZ_+^k$) can be described as follows:

\begin{enumerate}
\item
Define $\A'$ to be the set consisting of the following:
\begin{itemize}
\item
$\F$ and $i|_1, i|_0,~\forall i \in [n]$.
\item
Suppose $S := \bigcup_{i=1}^{\l} W_i$ is a tier. Then we do the following:
\begin{itemize}
\item
For each $\l$-tuple of sets, $(T_1, \ldots, T_{\l})$ such that $T_i \subseteq W_i,~\forall i \in [\l]$ and \\ $\sum_{i=1}^{\l} |T_i| \leq k$, include the set
\begin{equation}\label{BZvar1}
 \left. \left( \bigcup_{i=1}^{\l} W_i \sm T_i \right) \right|_1 \cap \left. \left( \bigcup_{i=1}^{\l} T_i \right) \right|_0.
\end{equation}
If $\sum_{i=1}^{\l} |T_i| = k$ and $T_{\l} \subset W_{\l}$, then include the set
\begin{equation}\label{BZvar2}
 \left. \left( \bigcup_{i=1}^{\l-1} W_i \sm T_i \right) \right|_1 \cap \left. \left( \bigcup_{i=1}^{\l-1} T_i \right) \right|_0
 \cap W_{\l}|_{< |W_{\l}| - |T_{\l}|}.
\end{equation}
\end{itemize}
\end{itemize}
\item
Let $\tilde{\BZ}^k(P)$ denote the set of matrices $Y \in \mathbb{S}^{\A'}$ that satisfy all of the following conditions:
\begin{itemize}
\item[($\BZ1$)]
$Y[\F, \F] = 1$.
\item[($\BZ2$)]
For any column $x$ of the matrix $Y$,
\begin{itemize}
\item[(i)]
$0 \leq x_{\alpha} \leq x_{\F}$, for all $\a \in \A'$.
\item[(ii)]
$\het{x}(x) \in K(\O_k(P))$.
\item[(iii)]
$x_{i|_1} + x_{i|_0} = x_{\F}$, for every $i \in [n]$.
\item[(iv)]
For each $\a \in \A'$ of the form $S|_1 \cap T|_0$, impose the inequalities
\begin{eqnarray}
\label{Ck41o} x_{i|_1} &\geq& x_{\a}, \quad \forall i \in S; \\
\label{Ck42o} x_{i|_0} &\geq& x_{\a}, \quad \forall i \in T; \\
\label{Ck43o} \sum_{i \in S} x_{i|_1} + \sum_{i \in T} x_{i|_0} - x_{\alpha} &\leq& (|S| + |T| -1)x_{\F}.
\end{eqnarray}
\item[(v)]
For each $\a \in \A'$ of the form $S|_1 \cap T|_0 \cap U|_{< r}$, impose the inequalities
\begin{eqnarray}
\label{Ck44o} x_{i|_1} &\geq& x_{\a}, \quad \forall i \in S; \\
\label{Ck45o} x_{i|_0} &\geq& x_{\a}, \quad \forall i \in T; \\
\label{Ck46o} \sum_{i \in U} x_{i|_0} &\geq& (|U| - (r-1)) x_{\alpha}.
\end{eqnarray}
\item[(vi)]
For each variable of the form~\eqref{BZvar1}, if $|W_{\l}| + \sum_{i=1}^{\l-1} |T_i| \leq k$, impose
\begin{eqnarray}
 \nonumber && \sum_{U \subseteq W_{\l}}
 x_{ \left( \bigcup_{i=1}^{\l-1} W_i \sm T_i \right)|_1 \cap \left( \bigcup_{i=1}^{\l-1} T_i \right)|_0 \cap (W_{\l} \sm U)|_1 \cap U|_0} \\
 \label{sumwall1o}&=&x_{ \left( \bigcup_{i=1}^{\l-1} W_i \sm T_i \right)|_1 \cap \left( \bigcup_{i=1}^{\l-1} T_i \right)|_0}.
\end{eqnarray}
Otherwise, define $r:= k - ( \sum_{i=1}^{\l-1} |T_i|)$, and impose
\begin{eqnarray}
 \nonumber && \sum_{U \subseteq W_{\l}, |U| \leq r}
 x_{ \left( \bigcup_{i=1}^{\l-1} W_i \sm T_i \right)|_1 \cap \left( \bigcup_{i=1}^{\l-1} T_i \right)|_0 \cap (W_{\l} \sm U)|_1 \cap U|_0} \\
 \nonumber &+& x_{ \left( \bigcup_{i=1}^{\l-1} W_i \sm T_i \right)|_1 \cap \left( \bigcup_{i=1}^{\l-1} T_i \right)|_0 \cap W_{\l}|_{< |W_{\l}| - r}} \\
 \label{sumwall2o} &=& x_{ \left( \bigcup_{i=1}^{\l-1} W_i \sm T_i \right)|_1 \cap \left( \bigcup_{i=1}^{\l-1} T_i \right)|_0}.
\end{eqnarray}
\end{itemize}
\item[($\BZ3$)]
For all $\a, \b \in \A'$ such that $\a \cap \b = \es$, or $\a \cap \b$ is contained in $O|_1$ for some $k$-small obstruction $O \in \O_k$, $Y[\a,\b] = 0$.
\item[($\BZ4$)]
For all $\a_1,\b_1, \a_2,\b_2 \in \A'$ such that $\a_1 \cap \b_1 = \a_2 \cap \b_2$, $Y[\a_1, \b_1] = Y[\a_2, \b_2]$.
\end{itemize}
\item
Define
\[
\BZ^k(P) := \set{x \in \mathbb{R}^n: \exists Y \in \tilde{\BZ}^k(P),  \het{x}(Ye_{\F})= \het{x}},
\]
and
\[
\BZ_+^k(P) := \set{x \in \mathbb{R}^n: \exists Y \in \tilde{\BZ}_+^k(P),  \het{x}(Ye_{\F})= \het{x}},
\]
where $\tilde{\BZ}_+^k(P) := \tilde{\BZ}^k(P) \cap \mathbb{S}_+^{\A'}$.
\end{enumerate}

In~\cite{BienstockZ04a}, $\BZ$ was defined so that the first relaxation in the hierarchy is $\BZ^2(P)$, with $\BZ^{n+1}(P)$ being the $n^{\tn{th}}$ relaxation that is guaranteed to be $P_I$. We have modified their definitions and presented their operators such that the relaxations are instead $\BZ^1(P), \ldots, \BZ^n(P)$, to align them with the other named operators mentioned in this manuscript.

\section{Relationships among Variants of the $\BZ$ Operator,\\
and some omitted Proofs}

Next, we show that $\BZ'$ and $\BZ_{+}'$ indeed dominate their original counterparts.

\begin{prop}
For every polytope $P \subseteq [0,1]^n$ and integer $k \geq 1$, $\BZ'^k(P) \subseteq \BZ^k(P)$ and
$\BZ_{+}'^{k}(P) \subseteq \BZ_+^k(P)$.
\end{prop}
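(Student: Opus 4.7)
The plan is to show that every matrix in $\tilde{\BZ}'^{k}(P)$ (respectively $\tilde{\BZ}_{+}'^{k}(P)$) restricts to a matrix in $\tilde{\BZ}^{k}(P)$ (respectively $\tilde{\BZ}_{+}^{k}(P)$) via taking an appropriate principal submatrix. First I would verify the containment of variable sets. Since the wall collection of $\BZ^{k}$ differs from that of $\BZ'^{k}$ only by omitting the singletons $\{1\},\dots,\{n\}$, every wall of $\BZ^{k}$ is a wall of $\BZ'^{k}$, and consequently every tier of $\BZ^{k}$ is a tier of $\BZ'^{k}$. Examining the two variable forms \eqref{BZvar1} and \eqref{BZvar2}, each $\BZ^{k}$ variable of the form $(\bigcup_{i=1}^{\ell}W_i \sm T_i)|_1 \cap (\bigcup_{i=1}^{\ell}T_i)|_0$ is a variable of $\BZ'^{k}$ associated with the tier $\bigcup_{i=1}^{\ell}W_i$ (since $|\bigcup_i T_i|\le k$), and similarly for the variables carrying a $W_\ell|_{<r}$ factor. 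Denote by $\A''\subseteq \A'$ this subcollection, and given $Y\in \tilde{\BZ}'^{k}(P)$ with $\het{x}(Ye_\F)=\het{x}$, let $Y''$ be the principal submatrix of $Y$ indexed by $\A''$. Then $Y''[\F,\F]=1$ and $\het{x}(Y''e_\F)=\het{x}$ automatically.

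Next I would verify conditions $(\BZ 1)$--$(\BZ 4)$ for $Y''$. Constraints $(\BZ 1)$, $(\BZ 2)$(i)--(v) and $(\BZ 4)$ are literally weakenings of the corresponding items $(\BZ' 1)$, $(\BZ' 2)$(i)--(v) and $(\BZ' 4)$, and hence are inherited by the submatrix. For $(\BZ 2)$(vi), the identities \eqref{sumwall1o} and \eqref{sumwall2o} can be obtained by iteratively applying the sharper local identities \eqref{sumwall1} and \eqref{sumwall2} of $(\BZ' 2)$: unwrapping $\bigcup_{U\subseteq W_\ell}$ one index of $W_\ell$ at a time, and using that the intermediate variables $(S\sm T)|_1\cap T|_0$ with $T\subseteq \bigcup_i W_i$, $|T|\le k$, all exist in $\A'$ (since $\bigcup_i W_i$ is a tier of $\BZ'^{k}$); and for the $|W_\ell|+\sum_{i<\ell}|T_i|>k$ case, terminating the unwrapping with one application of \eqref{sumwall2} at the appropriate threshold.

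The key verification is $(\BZ 3)$. I would split into the two cases of its hypothesis. If $\a\cap\b=\es$, an easy case analysis on the defining forms of $\a,\b$ (either $S|_1\cap T|_0$ or $S|_1\cap T|_0\cap U|_{<r}$) shows that this forces some index $i$ with $i\in S_\a\cap T_\b$ (or the symmetric situation), in which case $\conv(\a)$ requires $x_i=1$ while $\conv(\b)$ requires $x_i=0$, so $\conv(\a)\cap\conv(\b)=\es$; a similar argument covers the $U|_{<r}$ cases using that $\sum_{j\in U}x_j\le r-1$ on one side and $x_j=1$ on the other yields a contradiction when $\a\cap\b=\es$. If instead $\a\cap\b\subseteq O|_1$ for some $k$-small obstruction $O$, then either $\a\cap\b=\es$ (handled above) or $O\subseteq S_\a\cup S_\b$, so that every point in $\conv(\a)\cap\conv(\b)$ satisfies $x_j=1$ for all $j\in O$; by nonnegativity of $A$ and $\sum_{j\in O}A^i_j>b_i$, no such point lies in $P$, so $\conv(\a)\cap\conv(\b)\cap P=\es$. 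In both cases $(\BZ'3)$ yields $Y[\a,\b]=0$, as required.

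Finally, the positive semidefinite case adds nothing new: $Y''$ is a principal submatrix of $Y$, so $Y\succeq 0$ implies $Y''\succeq 0$, giving $\BZ_{+}'^{k}(P)\subseteq \BZ_{+}^{k}(P)$. The main obstacle is the $(\BZ 3)$ verification, specifically handling the obstruction case where one must translate the combinatorial condition $\a\cap\b\subseteq O|_1$ into a geometric statement about $\conv(\a)\cap\conv(\b)\cap P$ using the nonnegativity of the system $Ax\le b$ defining $P$; the remaining steps are essentially bookkeeping.
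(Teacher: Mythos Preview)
Your approach is essentially the same as the paper's: show that every variable generated by $\BZ^k$ already appears among the $\BZ'^k$ variables, then verify that the principal submatrix of a $\BZ'^k$ certificate satisfies the $\BZ^k$ conditions. Your treatment of $(\BZ 3)$ is in fact more detailed than the paper's, which simply asserts that ``$(\BZ'3)$ is more easily triggered than $(\BZ 3)$'' without further argument.

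One place where the paper is more careful than you are is the variable containment for the form \eqref{BZvar2}. You write ``similarly for the variables carrying a $W_\ell|_{<r}$ factor,'' but this glosses over the case where $W_\ell$ is not disjoint from $\bigcup_{i=1}^{\ell-1} (W_i\setminus T_i)$. In that situation the $\BZ^k$ variable is not literally of the form $(S\setminus(T\cup U))|_1\cap T|_0\cap U|_{<|U|-(k-|T|)}$ with the required disjointness of $S\setminus(T\cup U)$, $T$, and $U$; one must replace $W_\ell$ by $W':=W_\ell\setminus\bigcup_{i<\ell}(W_i\setminus T_i)$ and observe that the resulting set either equals a bona fide $\BZ'^k$ variable or is empty. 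The paper handles this explicitly. A similar remark applies to \eqref{BZvar1} when the $W_i$'s overlap and the resulting set collapses to $\emptyset$. These are routine bookkeeping issues, but your sketch would benefit from flagging them rather than folding them into ``similarly.''
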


\begin{proof}
It is apparent that every variable generated by $\BZ^k$ is also generated by $\BZ'^k$. The only nontrivial case is when $\BZ^k$ generates a variable of the form
\begin{equation}\label{BZ'BZ1}
 \left. \left( \bigcup_{i=1}^{\l-1} W_i \sm T_i \right) \right|_1 \cap \left. \left( \bigcup_{i=1}^{\l-1} T_i \right) \right|_0
 \cap W_{\l}|_{< |W_{\l}| - |T_{\l}|}
 \end{equation}
 such that $W_{\l}$ is not disjoint from $\bigcup_{i=1}^{\l-1} W_i \sm T_i $. In this case if we define $W' := W_{\l} \sm \left( \bigcup_{i=1}^{\l-1} W_i \sm T_i \right) $, then the above is equivalent to $\es$ if $|W'| \leq |T_{\l}|$, and
\[
\left. \left( \bigcup_{i=1}^{\l-1} W_i \sm T_i \right) \right|_1 \cap \left. \left( \bigcup_{i=1}^{\l-1} T_i \right) \right|_0
 \cap W'|_{< |W'| - |T_{\l}|}
\]
otherwise, which we know is generated by $\BZ'^k$. Also, note that in the case of $\bigcup_{i=1}^{\l-1} W_i \sm T_i $ and $\bigcup_{i=1}^{\l-1} T_i$ having a nonempty intersection, \eqref{BZ'BZ1} evaluates to the empty set.

Also, the condition ($\BZ'3$) is more easily triggered than ($\BZ 3$), and thus $\BZ'$ forces more variables to be zero and is more restrictive. It is also not hard to see that the constraints~\eqref{Ck41}--\eqref{sumwall2} imply their corresponding counterparts~\eqref{Ck41o}--\eqref{sumwall2o} in $\BZ$. Hence, we have $\tilde{\BZ}'^k(P) \subseteq \tilde{\BZ}^k(P)$, and it follows readily that $\BZ'^k(P) \subseteq \BZ^k(P)$ and $\BZ_+'^k(P) \subseteq \BZ_+^k(P)$.
\end{proof}

As Bienstock and Zuckerberg proved in~\cite{BienstockZ04a}, the original $\BZ$ operator can efficiently solve many set covering type problems
which require exponential effort to solve by previously used operators such as $\SA$. However, since $\BZ^k$ does not ensure that it generates walls of small sizes, its tiers (which are unions of walls) could all be large, and the lifted set of variables $\A'$ does not necessarily contain $\A_{k}$ as in $\BZ'^k$. In fact, in some cases, $\BZ^k$ performs no better than one round of $\LS$.

\begin{prop}\label{LSownsBZ}
Let $p,q$ be positive integers such that $1 \leq q < p$, and let
\[
P := \set{ x \in [0,1]^{p} : \sum_{i=1}^{p} x_i \leq  q+ \frac{1}{2}}.
\]
If $(k+1)(k+2) \leq p-q$ and $k+1 \leq q$, then $\BZ^k(P) = \LS(P)$ and $\BZ_+^k(P) = \LS_+(P)$.
\end{prop}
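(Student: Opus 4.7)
The plan is to prove both inclusions. The forward direction $\BZ^k(P) \subseteq \LS(P)$ (and the PSD analog) is the easier one: given any $Y \in \tilde{\BZ}^k(P)$, I will extract the principal submatrix $Y_0 \in \mathbb{R}^{(p+1) \times (p+1)}$ indexed by $\{\F\} \cup \{i|_1 : i \in [p]\}$. Condition $(\BZ 2)(ii)$ applied to column $i|_1$ yields $Y_0 e_i \in K(P)$ directly, while $(\BZ 2)(ii)$ on column $i|_0$ combined with $(\BZ 2)(iii)$ yields $Y_0(e_0 - e_i) \in K(P)$. Symmetry and positive semidefiniteness are inherited by principal submatrices, so the same argument handles $\BZ^k_+(P) \subseteq \LS_+(P)$.

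For the reverse direction, I will first classify the variables generated by $\BZ^k$. Since $k+1 \leq q$ excludes the $|O| \leq k+1$ branch, every $k$-small obstruction must satisfy $|O| \geq p - k - 1$; hence every wall has $|W| \geq p - 2(k+1)$ and every tier (a union of at most $k$ walls) has the same lower bound. The hypothesis $(k+1)(k+2) \leq p-q$ then yields $|S \setminus T| \geq p - 3k - 2 > q + \tfrac{1}{2}$ for every tier $S$ and every $T \subseteq S$ with $|T| \leq k$, so every variable of type~\eqref{BZvar1} and every \eqref{BZvar2} variable with $\ell \geq 2$ is $P$-useless; its convex hull is disjoint from $P$, which forces its column (and row) to vanish in any $\BZ^k$-certificate via $(\BZ 2)(i,ii)$. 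The only useful non-trivial variables are the wall variables $\omega_V := V|_{<|V|-k}$ arising from \eqref{BZvar2} with $\ell = 1$.

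Given $x \in \LS(P)$ with $\LS$-certificate $Y_0 \in \mathbb{S}^{p+1}$, I will build a $\BZ^k$-certificate $Y \in \mathbb{S}^{\A'}$ as follows: zero out every row and column indexed by a useless variable; fill the $\{\F\} \cup \{i|_1, i|_0 : i \in [p]\}$ block by translating $Y_0$ through $Y[i|_1, j|_1] = Y_0[i,j]$, $Y[i|_1, j|_0] = x_i - Y_0[i,j]$, $Y[i|_0, j|_0] = 1 - x_i - x_j + Y_0[i,j]$; and set each wall row/column to mirror the $\F$ row/column, $Y[\omega_V, \beta] = Y[\F, \beta]$. Compactly, $Y = A' Y_0 (A')^\top$, where $A'$ has rows $e_0^\top$ (for $\F$ and each $\omega_V$), $e_i^\top$ (for $i|_1$), $(e_0 - e_i)^\top$ (for $i|_0$) and zero rows (for useless variables); this representation makes $Y \succeq 0$ immediate from $Y_0 \succeq 0$, handling the PSD case simultaneously. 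Conditions $(\BZ 1)$, $(\BZ 2)(i)$--$(iv)$, and $(\BZ 4)$ follow from the $\LS$ properties of $Y_0$ and the mirror structure. Consistency with $(\BZ 3)$ and $(\BZ 4)$ between useless and useful entries holds because each useless variable forces at least $q+1 \geq 3$ coordinates to~$1$, whereas any intersection of two useful variables can force at most two coordinates to~$1$; so a useless intersection can never coincide set-theoretically with a non-empty useful intersection. Condition $(\BZ 2)(vi)$, namely~\eqref{sumwall2o} with $\ell = 1$ and $W_1 = V$, collapses to $Y[\omega_V, \alpha] = Y[\F, \alpha]$ after the useless $\gamma_U$-entries vanish, which holds by construction.

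The one delicate verification is $(\BZ 2)(v)$ applied to the wall column $\omega_V$: the only non-trivial inequality there is~\eqref{Ck46o}, which under the mirror construction reduces to $\sum_{i \in V} x_i \leq |V| - k - 1$. For $k \geq 2$, the hypothesis $(k+1)(k+2) \leq p-q$ gives $|V| - k - 1 \geq p - 3k - 3 \geq q + \tfrac{1}{2}$, so this inequality is already implied by $x \in P$. The \textbf{main obstacle} is the boundary case $k=1$ with $p-q = 6$, in which the minimum-size walls have $|V| = q+2$ and the required inequality becomes the classical knapsack cover $\sum_{i \in V} x_i \leq q$, which is not implied by $P$ alone. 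To complete the proof I will have to show this cover inequality is valid for $\LS(P)$: I plan to exploit the symmetry identity $x_i u^i_j = x_j u^j_i$ (with $u^i := Y_0 e_i / x_i \in P$ and $v^i := Y_0(e_0 - e_i)/(1-x_i) \in P$) and aggregate the $\LS$ relations over $i \in V$ against those over $i \in V^c$, using $|V^c| \leq 2(k+1) = 4$ to recover the cover bound via a convex combination in $K(P)$.
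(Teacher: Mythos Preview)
Your overall architecture matches the paper's: classify the obstructions (all of size $\geq p-(k+1)$), observe that every nontrivial tier variable other than the wall variables $\omega_V = V|_{<|V|-k}$ has empty intersection with $P$, and then argue that the only effective variables are those of $\LS$. The paper does this in one sentence by declaring every tier $P$-useless; your explicit certificate construction is essentially the unwinding of that claim.

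The genuine gap is your handling of the boundary case $k=1$, $p-q=6$. Your plan is to show that the cover inequality $\sum_{i\in V} x_i \leq q$ (for the minimum-size walls $|V|=q+2$) is valid for $\LS(P)$, but this is \emph{false}. Take $q=2$, $p=8$, $V=\{1,2,3,4\}$, and $x=(5/8,5/8,5/8,5/8,0,0,0,0)$. The symmetric matrix $Y_0$ with $Y_0[0,0]=1$, $Y_0[0,i]=Y_0[i,i]=5/8$ for $i\leq 4$, $Y_0[i,j]=5/16$ for distinct $i,j\leq 4$, and zeros elsewhere satisfies $Y_0 e_i, Y_0(e_0-e_i)\in K(P)$ (both constraints are tight), so $x\in\LS(P)$; yet $\sum_{i\in V}x_i=5/2>q$. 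Your symmetry-aggregation plan cannot rescue this, because the target inequality simply does not hold.

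Worse, your own analysis shows that in any certificate $Y\in\tilde{\BZ}^1(P)$ the constraints $(\BZ2)$(i)(ii)(iv) force $Y[\F,\gamma_U]=0$ for every $\gamma_U=(V\setminus U)|_1\cap U|_0$, so \eqref{sumwall2o} forces $Y[\omega_V,\F]=1$, and then \eqref{Ck46o} on the $\F$-column gives exactly $\sum_{i\in V}x_i\leq q$. Hence this cover inequality \emph{is} valid for $\BZ^1(P)$ but not for $\LS(P)$, which means the reverse inclusion $\LS(P)\subseteq\BZ^1(P)$ actually fails at the boundary. The paper's terse appeal to $P$-uselessness glosses over precisely this point: a ``useless'' variable $\omega_V$ whose column is forced to mirror $\F$ can still impose a nontrivial constraint via \eqref{Ck46o}. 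So your instinct that the boundary case is the crux was right, but the resolution you outline cannot work; the statement as given appears to need a strict inequality $(k+1)(k+2)<p-q$ (equivalently $p-q\geq 3k+4$) for the equality conclusion to hold as you have set it up.
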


\begin{proof}
Since $q + \frac{1}{2} > k+1$, there are no $k$-small obstructions of size $k+1$ or less. Thus, $S \subseteq [n]$ is a $k$-small obstruction if and only if $|S| \geq p-(k+1)$, which implies that every wall (and hence, every tier) has size at least $p-(k+1)^2$. If $p - (k+1)^2 -(k+1) \geq q$, then we see that every tier is $P$-useless. The only remaining non-useless variables are $\F, i|_1$ and $i|_0$ for all $i \in [n]$. Thus, $\BZ^k(P) = \LS(\O_k(P))$ and $\BZ_+^k(P) = \LS_+(\O_k(P))$.

Furthermore, $\O_k(P) = P$ whenever $k+1 \leq p-q$, which is implied by $(k+1)(k+2) \leq p-q$. Thus, our claim follows.
\end{proof}

Since $\LS(P) \subset P$ whenever $P \neq P_I$, the above implies that one can construct examples in which $\LS^2(P) \subset \BZ^k(P)$ for arbitrarily large $k$. On the other hand, it is easy to obtain a lift-and-project operator that has the unique strength of $\BZ$, while also refining the earlier operators (for instance, by simply taking $\Gamma^k(P) = \SA^k(P) \cap \BZ^k(P)$).

We can take this one step further. Recall that $\BZ'$ generates exponentially many variables in its lifted space, and thus does not admit a straightforward polynomial-time implementation. However, the number of variables generated becomes polynomial in $n$ if we instead use the original $\BZ$'s rule of generating tiers (i.e., defining $S$ to be a tier if it is a union of up to $k$ walls). Let $\BZ''$ denote this new operator. Then $\BZ''$ is just like the original $\BZ$, except it has polynomially more variables, always ensures the singleton sets are walls, and imposes the condition ($\BZ' 3$) instead of the weaker ($\BZ 3$). Also, just like ($\SA' 4$) and ($\SA_+' 4$), the condition ($\BZ'3$) can be efficiently verified, given we have an efficient separation oracle for $P$, and the condition is only checked polynomially many times. Replacing ($\BZ 3$) with ($\BZ' 3$) boasts the advantage of eliminating the operator's dependence on the set of obstructions in the lifting step, and allows us state the operator as a two-step process. Thus. if $k=O(1)$ and we have a compact description of $P$, then $\BZ''^k(P)$ is tractable. It is also not hard to see that $\BZ''$ dominates both $\SA'$ and $\BZ$. Moreover, the following is true:

\begin{prop}\label{BZ''upper}
The $\BZ''$-rank of $P$ is at most $\left\lceil \frac{n+1}{2} \right\rceil$, for all $P \subseteq [0,1]^n$.
\end{prop}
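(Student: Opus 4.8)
The plan is to show that after $\lceil (n+1)/2 \rceil$ rounds, $\BZ''$ computes the integer hull of any $P \subseteq [0,1]^n$ by exhibiting, for each point $x \in \BZ''^k(P)$ with $k = \lceil (n+1)/2 \rceil$, a convex decomposition of $x$ into integral points of $P$. The key observation is that with $k$ this large, every subset of $[n]$ of size at most $k$ is a tier (since singleton sets are walls in $\BZ''$, and a tier is a union of up to $k$ walls), so $\A'$ contains $\A_k$; in particular $\BZ''^k(P) \subseteq \SA^k(\O_k(P)) \subseteq \SA^k(P)$. So it would suffice to bound the $\SA$-rank of $[0,1]^n$ by $\lceil(n+1)/2\rceil$; but actually we can do slightly better by using the fuller structure of the $\BZ''$ certificate, namely that for a tier $S$ of size larger than $k$ we also get the ``deep partition'' columns involving $U|_{<r}$.

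First I would take $x \in \BZ''^k(P)$ and a certificate matrix $Y \in \tilde{\BZ}''^k(P)$ with $\het{x}(Ye_{\F}) = \het{x}$. Since $[n]$ itself has size $n \leq 2k-1 < 2k$, and since all singletons are walls, the set $[n]$ is a union of $k$ walls provided $n \le k \cdot 1$... — more carefully, $[n]$ is a tier iff it is covered by $\leq k$ walls, and using just singleton walls this needs $n \le k$, which fails; so instead I would exploit that $[n]$ is covered by $k$ walls where each wall is a singleton except we are allowed walls of the form $O_i \cap O_j$. The cleaner route: take $S$ to be any tier of size $k$ (e.g. $\{1,\dots,k\}$, which is a union of $k$ singleton walls). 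Then $\BZ''$ generates, for each $T \subseteq S$ with $|T| \le k$, the column $(S\setminus T)|_1 \cap T|_0$, and these $2^{|S|}$ columns partition $\F$. I would then split $x$ according to the values $Y[\F, (S\setminus T)|_1 \cap T|_0] =: \lambda_T$, which by $(\BZ'2)(\mathrm{iii})$ and consistency sum to $1$; for each $T$ with $\lambda_T > 0$, $(\BZ'2)(\mathrm{ii})$ gives a point $x^{(T)} := \frac{1}{\lambda_T}\het{x}(Ye_{(S\setminus T)|_1\cap T|_0}) \in \O_k(P) \subseteq P$ whose coordinates on $S$ are already $0/1$ (forced to $1$ on $S\setminus T$ by \eqref{Ck41}/consistency and to $0$ on $T$ by \eqref{Ck42}).

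The heart of the argument — and the main obstacle — is then the recursion: each $x^{(T)}$ has $k$ of its $n$ coordinates integral, so $n - k \le n - \lceil(n+1)/2\rceil = \lfloor (n-1)/2 \rfloor$ coordinates remain fractional. I would argue that $x^{(T)}$, lying in $P$ with at least $k$ fixed coordinates, admits a certificate in $\tilde{\BZ}''^{k'}$ for the restricted problem on the remaining $\le k-1$ free coordinates (or, more simply, that one further round of the operator — or an induction on $n$ — handles a polytope living effectively in dimension $\le k-1 \le \lceil(n+1)/2\rceil - 1$). Since $\lceil (n+1)/2\rceil \ge \lceil(m+1)/2\rceil + (\text{number of rounds already spent})$ when $m \le n - k$, the induction closes; the delicate points will be checking that the sub-certificate inherits all of $(\BZ'1)$–$(\BZ'4)$ (in particular that $(\BZ'3)$/$(\BZ'4)$ and the wall/tier structure restrict correctly to a face of the cube) and that the bookkeeping on the round count matches $\lceil (n+1)/2 \rceil$ exactly rather than off by one. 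I would finish by writing $x$ as the convex combination $\sum_T \lambda_T x^{(T)}$, where by induction each $x^{(T)} \in P_I$, hence $x \in P_I$, giving $\BZ''^{\lceil(n+1)/2\rceil}(P) = P_I$ as claimed.
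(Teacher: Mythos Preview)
Your first step---taking $S=[k]$ as a tier (via singleton walls) and decomposing $\het{x}(Ye_\F)=\sum_{T\subseteq S}Ye_{T|_1\cap(S\setminus T)|_0}$---matches the paper exactly. The gap is in the second step. You propose to handle each piece $x^{(T)}$ by induction on $n$, extracting a sub-certificate for the polytope restricted to the face and invoking the result in dimension $n-k$. But you never construct that sub-certificate, and the induction bookkeeping (``$\lceil(n+1)/2\rceil \ge \lceil(m+1)/2\rceil+(\text{rounds already spent})$'') is not coherent: we are analyzing a single application of $\BZ''^k$, not iterating the operator, so there is no ``round already spent'' to subtract. More seriously, your argument never makes essential use of $(\BZ'3)$; from $(\BZ'2)$(ii) alone you only get $x^{(T)}\in\O_k(P)$, and without $(\BZ'3)$ the $\lceil(n+1)/2\rceil$ bound is false (cf.\ the polytope $P(0)$ in Corollary~\ref{SA+rankneg}, whose $\SA_+$-rank is $n$).

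The paper avoids induction entirely by the following observation you missed: since $k\ge(n+1)/2$, we have $|\bar S|=n-k\le k-1$, so $\bar S$ and each $\{j\}\cup\bar S$ are themselves tiers of size $\le k$. Hence the \emph{same} certificate $Y$ already contains rows indexed by $S'|_1\cap(\bar S\setminus S')|_0$ for all $S'\subseteq\bar S$. Using \eqref{sumwall1} on these rows decomposes each column $Y_T:=Ye_{T|_1\cap(S\setminus T)|_0}$ further into pieces $Y_{T,T'}$ indexed by $T'\subseteq\bar S$, with $Y_{T,T'}=(Y_{T,T'})_0\begin{pmatrix}1\\\chi^{T\cup T'}\end{pmatrix}$. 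Now $(\BZ'3)$ is the finishing blow: if $(Y_{T,T'})_0>0$, then $\conv((T\cup T')|_1)\cap\conv(([n]\setminus(T\cup T'))|_0)\cap P\ne\emptyset$, which forces the $0$--$1$ point $\chi^{T\cup T'}$ itself to lie in $P$. Thus each $Y_{T,T'}\in K(P_I)$, and summing gives $\het{x}(Ye_\F)\in K(P_I)$ directly---no restriction to faces, no sub-certificates, no recursion.
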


\begin{proof}
Let $Y \in \tilde{\BZ}''^k(P)$ such that $k \geq \frac{n+1}{2}$. We show that $\het{x}(Ye_{\F}) \in K(P_I)$. Notice that $\BZ''^k$ generates $S := [k]$ as a tier (derived from $k$ singleton-set walls), and we know by~\eqref{sumwall1} and the symmetry of $Y$ that
\begin{equation}\label{SSbar2}
Ye_{\F} = \sum_{T \subseteq S} Ye_{T|_1 \cap (S \sm T)|_0}.
\end{equation}
In the remainder of this proof, we let $Y_T$ denote $Ye_{T|_1 \cap (S \sm T)|_0}$ to reduce cluttering. Note that since $|S| = k$, $\BZ''^k$ does generate the variable $T|_1 \cap (S \sm T)|_0$ for all $T \subseteq S$, and so $Y_T$ is well defined.

Next, we prove that $\het{x}(Y_T) \in K(P_I)$ for every $T \subseteq S$. Then by~\eqref{SSbar2}, it follows that $\het{x} (Ye_{\F}) \in K(P_I)$.
For convenience, we let $\bar{S}$ denote $[n] \setminus S$. Notice that
\begin{equation}\label{prop211}
(Y_T)_{\F} = \sum_{S' \subseteq \bar{S}} (Y_T)_{S'|_1 \cap (\bar{S} \sm S')|_0}
\end{equation}
by~\eqref{sumwall1}. Also, since $k \geq \frac{n+1}{2}$, $|\bar{S}| = n - k \leq k-1$. Hence, $\set{j} \cup \bar{S}$ is a tier for all $j \in [n]$, and
\begin{equation}\label{prop212}
(Y_T)_{j|_1} = \sum_{S' \subseteq \bar{S}} (Y_T)_{(j \cup S')|_1 \cap (\bar{S} \sm S')|_0}, \quad \forall j \in [n].
\end{equation}
Next, for all $T' \subseteq \bar{S}$, we define $Y_{T, T'} \in \mathbb{R}^{n+1}$ such that
\[
(Y_{T,T'})_i =\left\{
\begin{array}{ll}
(Y_T)_{T'|_1 \cap (\bar{S} \sm T')|_0} & \tn{if $i=0$ or $i \in T \cup T'$;}\\
0 & \tn{otherwise.}
\end{array}
\right.
\]
From~\eqref{prop211}, \eqref{prop212}, and the construction of $Y_{T, T'}$, we obtain that
\[
\het{x}(Y_T) = \sum_{T' \subseteq \bar{S}} Y_{T,T'},~\forall T \subseteq S.
\]
Thus, it suffices to show that $Y_{T,T'} \in K(P_I),~\forall T \subseteq S, T' \subseteq \bar{S}$. This is obviously true if $(Y_{T,T'})_0 = 0$. If $(Y_{T,T'})_0 > 0$, then by ($\BZ'3$) we know that $(T \cup T')|_1 \cap ([n] \sm (T \cup T'))|_0 \cap P \neq \es$. Since $Y_{T,T'} = \begin{pmatrix} (Y_{T,T'})_0 \\ (Y_{T,T'})_0 \chi^{T \cup T'} \end{pmatrix}$, it follows that $Y_{T,T'} \in K(P_I)$, completing the proof.
\end{proof}

Likewise, we can define $\BZ_+''$ to be the positive semidefinite counterpart of $\BZ''$, and obtain a tractable operator that dominates both $\SA'_+$ and $\BZ_+$. Therefore, it follows that the $\BZ_{+}''$-rank of any $P \subseteq [0,1]^n$ is also at most $\left\lceil \frac{n+1}{2} \right\rceil$. Moreover, observe that the essential ingredients used in the above proof are the presence of the variables in $\A_{\left\lceil n+1/2 \right\rceil}$ in the lifted space and the condition ($\BZ' 3$), which also applies for the $\SA_+'^k$ relaxation for any $k \geq \frac{n+1}{2}$. Thus, the above proof can be slightly modified to show that the $\SA_+'$-rank of any polytope contained in $[0,1]^n$ is at most $\left\lceil \frac{n+1}{2} \right\rceil$. In contrast, we have seen in \autoref{SA+rankneg} an example in which the $\SA_+$-rank is $n$.

Since $\BZ''$ dominates $\LS$, we can deduce from~\autoref{LSownsBZ} that there are examples where $\BZ''^2(P) \subset \BZ^2(P)$. Next, we provide another instance in which $\BZ''$ outperforms $\BZ$.

\begin{prop}\label{newBZ}
Let $P:= \set{x \in [0,1]^7 : \sum_{i=1}^7 2x_i \leq 7}$. Then
\[
y := \left( 0.76, 0.76, 0.76, 0.3, 0.3, 0.3, 0.3 \right)^{\top} \in \BZ(P) \setminus \BZ''(P).
\]
\end{prop}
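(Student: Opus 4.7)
Since $P$ is cut out by the single inequality $2x_1+\cdots+2x_7\leq 7$, a $k$-small obstruction is any $O\subseteq[7]$ with $|O|\geq 4$ and either $|O|\leq k+1$ or $|O|\geq 6-k$; at the first interesting level $k=1$ this forces $|O|\in\{5,6,7\}$, so any two obstructions meet in at least $3$ elements. Every $\BZ^1$-wall (hence every $\BZ^1$-tier) then has size at least $3$, whereas in $\BZ''^1$ each singleton $\{i\}$ is declared to be a wall. The new singleton walls introduce no additional variables beyond the already-listed $\F$, $i|_1$, $i|_0$, so the lifted spaces of $\BZ^1$ and $\BZ''^1$ coincide as sets of row/column indices, and the entire difference between the two operators reduces to $(\BZ 3)$ versus the sharper $(\BZ'3)$.

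First I would show $y\in\BZ^1(P)$ by exhibiting an explicit matrix $Y\in\tilde{\BZ}^1(P)$. The pair $(P,y)$ is invariant under the subgroup $S_3\times S_4$ of $S_7$ fixing the partition of $[7]$ into the ``heavy'' coordinates $\{1,2,3\}$ and the ``light'' coordinates $\{4,5,6,7\}$, so by the symmetrization idea of \autoref{intgap1} applied at the level of the lifted space I may restrict to matrices whose entries depend only on the orbit of $(\a,\b)$, collapsing the construction to a bounded number of parameters. For each $\BZ^1$-variable $\a=(S\setminus T)|_1\cap T|_0$ with $S$ a tier of size at least $3$ and $|T|\leq 1$, the column $Ye_\a$ is required to be a nonnegative multiple of a point $z_\a\in\O_1(P)=P$ matching $\a$ on its fixed coordinates; condition $(\BZ 3)$ leaves the remaining coordinates of $z_\a$ free to take fractional values summing to $3.5-|S\setminus T|$, which is precisely the slack needed to set the entries consistently so that $\het{x}(Ye_\F)=\het{y}$ and~\eqref{Ck41o}--\eqref{sumwall2o} all hold. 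Verifying $(\BZ 1)$--$(\BZ 4)$ then reduces to a finite system of linear equalities on the orbit parameters.

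Next I would show $y\notin\BZ''^1(P)$. The sharper $(\BZ'3)$ condition forces $Y[\a,\b]=0$ for many pairs that $(\BZ 3)$ cannot detect, most importantly pairs of the form $\a=\{i_1,i_2\}|_1\cap\{i_3\}|_0$ and $\b=\{i_4,i_5\}|_1\cap\{i_6\}|_0$ coming from two distinct $3$-tiers for which $\{i_1,i_2,i_4,i_5\}$ consists of four distinct coordinates: the intersection $\conv(\a)\cap\conv(\b)\cap P$ would require $\sum 2x_j\geq 8>7$ and is therefore empty, while $\a\cap\b$ is not contained in $O|_1$ for any obstruction $O$, so $(\BZ 3)$ detects nothing. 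Propagating these forced zeros through the relations~\eqref{sumwall1o}--\eqref{sumwall2o} and the cone-membership requirement $(\BZ 2)$(ii) on each column---together with the fact that $Y[\F,T|_1]=0$ for every tier $T$ with $|T|\geq 4$, which follows already from $(\BZ 3)$ since $|T|\geq 4$ makes $\conv(T|_1)\cap P=\es$---one eliminates all the tier-associated auxiliary variables and arrives at the aggregated inequality $\sum_{i=1}^7 x_i\leq 3$ valid on $\BZ''^1(P)$. Since $\sum_{i=1}^7 y_i=3\cdot 0.76+4\cdot 0.3=3.48>3$, no such certificate $Y$ can exist.

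The main obstacle is the aggregation step, where one must trace the newly forced zeros through a carefully chosen subset of~\eqref{sumwall1o}--\eqref{sumwall2o} and confirm that their combined effect is precisely the integer-sum facet $\sum_i x_i\leq 3$ of $P_I$, rather than some strictly weaker inequality. A secondary concern is coordinating the two halves of the argument: the $\BZ^1$-certificate built in the first step must assign strictly positive values on the pairs $(\a,\b)$ on which $(\BZ'3)$ imposes $Y[\a,\b]=0$, so that the explicit construction cannot accidentally be upgraded to a $\BZ''^1$-certificate for the same $y$.
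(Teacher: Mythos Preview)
Your plan for $y\in\BZ(P)$---build a symmetric certificate matrix by reducing to orbit parameters---is essentially what the paper does; the paper simply reports that membership ``can be checked computationally'' without spelling out the construction.

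For $y\notin\BZ''(P)$, however, there is a genuine gap at exactly the step you flag as the main obstacle. You aim to propagate $(\BZ'3)$-zeros between pairs of \emph{tier} variables and aggregate all the way to the integer facet $\sum_{i=1}^7 x_i\leq 3$, but you never carry this out, and the paper does \emph{not} establish that inequality for $\BZ''^1(P)$. The paper instead takes a much shorter route centred on the single tier $S=\{1,2,3\}$. The decisive $(\BZ'3)$-zeros it uses are not tier-versus-tier but the simpler $Y[j|_1,\,S|_1]=0$ for $j\in\{4,5,6,7\}$: the set $\conv(j|_1)\cap\conv(S|_1)\cap P$ is empty since four coordinates equal to $1$ already violates $\sum 2x_i\leq 7$, while $\{1,2,3,j\}$ has only four elements and so contains no obstruction, meaning $(\BZ 3)$ is blind to these zeros. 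Next, applying~\eqref{Ck43} to $S|_1$ on the $\F$-column gives $z_0:=Y[\F,S|_1]\geq 3(0.76)-2=0.28$. Splitting $\het{x}(Ye_\F)=z+w$ along the tier-$S$ partition with $z=\het{x}(Ye_{S|_1})$, the four zeros force $\sum_i z_i\leq 3z_0$, and $w\in K(P)$ forces $\sum_i w_i\leq \tfrac{7}{2}w_0$, whence
\[
\sum_{i=1}^7 y_i \;\leq\; 3z_0+\tfrac{7}{2}(1-z_0)\;=\;3.5-0.5z_0\;\leq\;3.36\;<\;3.48,
\]
a contradiction. So the paper obtains only a point-dependent bound of $3.36$, not the full facet; your target of $3$ is over-ambitious and unsupported. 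The fix is to lower your sights and exploit the simpler singleton-versus-tier zeros rather than the tier-versus-tier ones.
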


\begin{proof}
First, it is easy to see that $P_I = \set{ x \in [0,1]^7 : \sum_{i=1}^7 x_i \leq 3}$. Also, the $1$-small obstructions of $P$ is the collection of subsets of $[7]$ of size at least $5$, and it is not hard to see that $\O_1(P) = P$.

We first show that $\BZ''$ cuts off $y$. Since each wall is an intersection of up to two obstructions, every subset of $[7]$ of size between $3$ and $5$ is a wall. These sets are also exactly the tiers, as every tier consists of one wall in $\BZ''$. Suppose for a contradiction that there exists a certificate matrix $Y \in \tilde{\BZ}''(P)$ for $y$.  Consider the tier $S := \set{1,2,3}$. By~\eqref{sumwall2}, we know that
\begin{equation}\label{strrule}
Ye_{\F} = Ye_{S|_1} + \sum_{i \in S} Y e_{ (S \sm \set{i})|_1 \cap i|_0} + Y e_{S|_{<2}}.
\end{equation}
Since $\het{x} (Ye_{\a}) \in K(\O_1(P)) = K(P)$ for all variables $\a \in \A'$, we know from~\eqref{strrule} we can write  $\het{x} (Ye_{\F})$ as $z+w$, where $z := \het{x}(Ye_{S|_1})$, and $w \in K(P)$.

Now, applying~\eqref{Ck43} of $S|_1$ on the column $Y e_{\F}$, we obtain that
\[
Y[1|_1, \F] + Y[2|_1, \F] + Y[3|_1, \F] - Y[S|_1, \F] \leq (|S| - 1) Y[\F, \F].
\]
Hence, $z_0 = Y[\F, S|_1] = Y[S|_1, \F] \geq 3(0.76)-2 = 0.28$, and $w_0 = 1-z_0 \geq 0.72$. We also know that $\sum_{i=1}^7 w_i \leq \frac{7}{2} w_0$ (as $w \in K(P)$).

For $j \in \set{4,5,6,7}$, since $\conv(j|_1) \cap  \conv(S|_1) \cap P = \es$, our strengthened rule ($\BZ' 3$) requires that $Y[j|_1, S|_1] = 0$ (this is what sets $\BZ''$ apart from $\BZ$ in this example). Therefore, we have
\[
\sum_{i=1}^7 z_i = \sum_{i=1}^7 Y[i|_1, S|_1] \leq 3 Y[\F, S|_1] = 3 z_0.
\]
This would imply that the inequality
\[
\sum_{i=1}^7 x_i = \sum_{i=1}^7 (z_i + w_i) \leq 3z_0 + \frac{7}{2} w_0 \leq 3(0.28)+ \frac{7}{2}(0.72) = 3.36,
\]
is valid for $\BZ''(P)$, which is a contradiction as $\sum_{i=1}^7 y_i = 3.48$. Hence, $y \not\in \BZ''(P)$.

Finally, it can be checked computationally that $y \in \BZ(P)$. This finishes the proof of our claim.
\end{proof}

Note that the system of inequalities describing $\BZ(P)$ is already pretty large even for an example as small as that in~\autoref{newBZ}. Therein, any subset of $[7]$ of size between $3$ and $6$ can be expressed as the intersection of two $1$-small obstructions; so, each of them is a wall (and hence a tier). For each of these tiers $S$, there are $|S|+2$ associating variables ($S|_1, (S\sm \set{i})|_1 \cap i|_0$ for all $i \in S$, and $S|_{< |S|-2}$). Thus,
we see that $\tilde{\BZ}(P)$ is a subset of $603$-by-$603$ matrices, and our straightforward formulation of $\BZ(P)$ has more than two million constraints.

Next, we remark that, in general, adding redundant inequalities to the system $Ax \leq b$ could generate more obstructions and walls, and thus can improve the performance of $\BZ$ (and its variants). An example of this phenomenon is the following:

\begin{prop}\label{redundantineq}
Let $G$ be the graph in \autoref{fig3}. Furthermore, let $P$ be the set defined by the facets of $FRAC(G)$ and $P'$ be the system $P$ with the additional (redundant) inequality
\[
\sum_{i =1}^6 x_i \leq 3.
\]
Then
\[
\BZ_{+}'(P) \supset \BZ(P') = P_I.
\]
\end{prop}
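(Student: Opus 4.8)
The plan is to prove the two links of the chain separately: the strict inclusion $\BZ_+'(P)\supsetneq P_I$ and the equality $\BZ(P')=P_I$. A preliminary remark clears the ground. The added inequality $\sum_{i=1}^{6}x_i\le 3$ is a nonnegative combination of edge inequalities of $FRAC(G)$ (read a matching covering $\{1,\dots,6\}$ off \autoref{fig3}), hence is valid for $P=FRAC(G)$; thus $P'=P$ as polytopes and $P'_I=P_I$. Since both operators return supersets of the integer hull, $P_I\subseteq\BZ(P')$ and $P_I\subseteq\BZ_+'(P)$ hold automatically, and it remains to (a) produce a point of $\BZ_+'(P)$ lying outside $P_I$, and (b) show $\BZ(P')\subseteq P_I$.

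For (a) the key observation is that the only non-box inequalities describing $P=FRAC(G)$ are the edge inequalities $x_i+x_j\le 1$, so the $k$-small obstructions of $P$ are precisely the edges of $G$ and $\O_k(P)=P$. Consequently every wall of $\BZ_+'$ is a singleton, all tiers are small, and the lifted index set $\A'$ contains no variable indexed by a ``large'' set; in the extreme case this lift collapses to (a bounded round of) $\LS_+$ applied to $FRAC(G)$. Since the graph of \autoref{fig3} is not perfect, it carries an odd structure $H$ whose rank inequality is a facet of $STAB(G)=P_I$ not valid for this weak lift; I would make (a) explicit by exhibiting the vector $y$ equal to slightly more than $\tfrac12$ on $H$ and $0$ elsewhere together with an explicit positive semidefinite certificate placing $y$ in $\tilde{\BZ}_+'(P)$, while $\sum_{i\in H}y_i$ exceeds the right-hand side of the corresponding facet, so $y\notin P_I$. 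This gives $\BZ_+'(P)\supsetneq P_I$.

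For (b) I would first enumerate the obstructions of the \emph{augmented} system $P'$: besides the edges of $G$, the inequality $\sum_{i=1}^{6}x_i\le 3$ produces every subset of $\{1,\dots,6\}$ of size at least $4$ as an obstruction. Taking intersections and unions of these, every subset of $\{1,\dots,6\}$ becomes a wall, hence (for the relevant operator level) all of $\{1,\dots,6\}$ and its subsets become tiers; so $\A'$ contains, for each such $S$, the variables $S|_1$, $(S\sm T)|_1\cap T|_0$ for small $T$, and the associated $S|_{<r}$ variables. I would then show that for an arbitrary $Y\in\tilde{\BZ}(P')$ these variables, combined with the wall-sum identities \eqref{sumwall1o},\eqref{sumwall2o}, the inequalities \eqref{Ck43o},\eqref{Ck46o}, and $(\BZ3)$ (which forces $Y[\a,\b]=0$ whenever $\a\cap\b\subseteq O|_1$ for an obstruction $O$), decompose $\het{x}(Ye_{\F})$ as a nonnegative combination of $0,1$-points of $P$ — equivalently, that $\het{x}(Ye_{\F})$ satisfies every facet of $STAB(G)$ that is not already a facet of $FRAC(G)$. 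With $P_I\subseteq\BZ(P')$ this yields $\BZ(P')=P_I$.

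The main obstacle is step (b): one must verify that the particular family of obstructions contributed by $\sum_{i=1}^{6}x_i\le 3$, routed through the rather intricate constraint set of $\BZ$, really does generate \emph{all} the hard facets of $STAB(G)$ (and not merely weaker relaxations), i.e.\ that the ``deep partition'' of $\{1,\dots,6\}$ induced by these obstructions is deep enough. For the fixed small graph of \autoref{fig3} this is a finite computation — enumerate the tiers, write the induced linear system, and match each hard facet of $STAB(G)$ with an explicit tier and an explicit combination of the $\BZ$ constraints producing it. The contrast with (a), where $FRAC(G)$ has no obstruction beyond its edges and the lift stays shallow, is exactly the description-dependence phenomenon the proposition illustrates.
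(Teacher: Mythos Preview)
Your overall structure matches the paper's: prove (a) $\BZ_+'(P)\supsetneq P_I$ and (b) $\BZ(P')=P_I$ separately, and you correctly identify that for (a) the only obstructions of $P$ are the edges, so all walls and tiers are singletons and $\BZ_+'(P)=\LS_+(\O_1(P))=\LS_+(P)$. However, your concrete plan for (a) is shaky: you propose to take $y$ ``slightly more than $\tfrac12$'' on an odd structure $H$ and exhibit a PSD certificate. But $\LS_+$ is well known to generate all odd-cycle inequalities for $FRAC(G)$ in a single round, so a point violating only an odd-hole inequality will \emph{not} survive in $\LS_+(P)$. The paper sidesteps this by simply invoking the result of Lipt\'ak and Tun\c{c}el that this particular graph has $\LS_+$-rank $2$, hence $\LS_+(P)\neq P_I$; if you want to produce an explicit point, you must first identify which facet of $STAB(G)$ actually has $\LS_+$-rank $2$ (it is not a bare odd hole).

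For (b) your plan is correct in principle but far more laborious than necessary. The paper's argument is two lines: the added inequality $\sum_{i=1}^6 x_i\le 3$ makes every $4$-subset of $[6]$ a $1$-small obstruction, so every $2$-subset arises as the intersection of two such obstructions and is therefore a wall (hence a tier for $k=1$). Once all size-$2$ tiers are present, $\BZ(P')\subseteq \SA^2(P')$, and $\SA^2(FRAC(G))=STAB(G)$ for this six-vertex graph. You do not need to unwind the wall-sum identities or decompose $\hat{x}(Ye_{\F})$ by hand; the dominance $\BZ^k\subseteq \SA^k$ (when the relevant tiers are present) does all the work.
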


\begin{figure}[htb]
 \begin{center}

\begin{tikzpicture}[scale =1 ,>=stealth',shorten >=1pt,auto,node distance=1cm,
  thick,main node/.style={circle, draw,font=\scriptsize\sffamily}]

  \node[main node] at (2,2) (1) {1};
  \node[main node] at (0,3) (2) {2};
  \node[main node] at (2,4) (3) {3};
  \node[main node] at (4,3) (4) {4};
  \node[main node] at (3,1) (5) {5};
  \node[main node] at (1,1) (6) {6};

 \draw (1) -- (2) ;
 \draw (3) -- (2) ;
 \draw (3) -- (4) ;
 \draw (4) -- (5) ;
 \draw (5) -- (6) ;
 \draw (1) -- (3) ;
 \draw (6) -- (2) ;
 \draw (1) -- (4) ;

\end{tikzpicture}

 \end{center}
\caption{A graph for which $\BZ$ performs better on $FRAC(G)$ with a redundant inequality.}\label{fig3}
\end{figure}
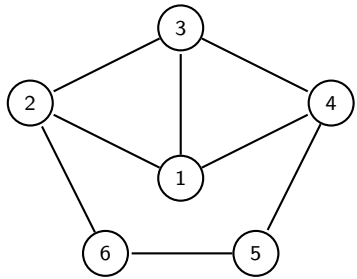

\begin{proof}
For the first claim, notice that the obstructions generated by $\BZ_{+}'$ are exactly the edge sets, so $\O_k(P) = (P)$. This also implies that all walls and tiers have size $1$, so
\[
\BZ_{+}'(P) = \LS_+(\O_k(P)) = \LS_+(P) \neq P_I,
\]
as it is shown in~\cite{LiptakT03a} that $P$ has $\LS_+$-rank $2$.

For the second claim, notice that with the additional inequality in $P'$, all sets of size at least $4$ are $1$-small obstructions, and thus all sets of size $2$ are walls (and hence tiers). In this case, $\BZ(P') \subseteq \SA^2(P') = P_I$.
\end{proof}

In fact, since $\BZ$ (and its variants) depends heavily on the algebraic description of the input set, it does not share some of the more fundamental properties with the earlier lift-and-project operators. For example, all other named operators mentioned in this paper preserves containment (i.e. $P \subseteq P'$ implies $\Gamma(P) \subseteq \Gamma(P')$). We give an example where that is not the case for $\BZ$.

\begin{prop}\label{containmentfail}
Let $G$ be the graph in \autoref{fig5}, and let $P$ be the set defined by the facets of $FRAC(G)$. Moreover, let $P'$ be the system as described in~\autoref{redundantineq}. Then
\[
P \subset P' \quad \tn{and} \quad \BZ(P) \not\subseteq \BZ(P').
\]
\end{prop}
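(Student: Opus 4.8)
The plan rests on combining \autoref{redundantineq} with the obstruction bookkeeping already used in its proof. Write $H$ for the graph of \autoref{fig3}. By \autoref{redundantineq} the inequality $\sum_{i=1}^6 x_i \le 3$ is valid for $FRAC(H)$, so the system $P'$ (the facets of $FRAC(H)$ together with that inequality) describes the polytope $FRAC(H)$, and \autoref{redundantineq} gives $\BZ(P') = FRAC(H)_I = STAB(H)$. First I would verify from \autoref{fig5} that $G$ contains $H$ as a spanning subgraph with $E(H) \subsetneq E(G)$, so that $P = FRAC(G) \subsetneq FRAC(H) = P'$; a separating point is any $x$ that is tight on one of the new edge constraints of $G$ but still lies in $FRAC(H)$. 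This establishes $P \subset P'$, and since $\BZ(P') = STAB(H)$ it remains to produce a point of $\BZ(P)$ outside $STAB(H)$ (which is exactly the failure of containment-preservation that the statement advertises).

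Next I would identify $\BZ(P)$. Since $FRAC(G)$ is described only by box and edge constraints, the argument in the first paragraph of the proof of \autoref{redundantineq} applies verbatim: the $1$-small obstructions of $FRAC(G)$ are exactly the edge sets $\set{i,j}$, one has $\O_1(FRAC(G)) = FRAC(G)$, and every wall and every tier produced by $\BZ^1$ has size at most $1$. Hence the index set $\A'$ in the lifting step of $\BZ^1$ reduces to $\set{\F} \cup \set{i|_1, i|_0 : i \in [6]}$, and matching $(\BZ 1)$--$(\BZ 4)$ against the definition of $\LS$ yields $\BZ(P) = \LS(FRAC(G))$.

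It then suffices to exhibit $y \in \LS(FRAC(G)) \sm STAB(H)$. I would write down $y$ explicitly --- concentrated on the vertices that carry the odd-hole and clique structure of $H$, and chosen symmetrically wherever an automorphism of $G$ allows --- and verify two things. First, that $y$ violates a facet of $STAB(H)$; the natural candidates are the odd-hole inequality of $STAB(H)$ coming from the induced $5$-cycle of $H$, or the rank inequality $\sum_i x_i \le \alpha(H)$, both of which are valid for $STAB(H) = \BZ(P')$. Second, that $y \in \LS(FRAC(G))$, which I would certify by displaying a symmetric $Y \in \mathbb{R}^{7 \times 7}$ with $Ye_0 = \tn{diag}(Y) = \het{y}$ and $Ye_i,\, Y(e_0 - e_i) \in K(FRAC(G))$ for every $i \in [6]$.

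The crux is this last point: the extra edges of $G$ must be chosen so that, on the one hand, they are slack at every vector arising as a normalized column $\tfrac{1}{(Ye_i)_0}Ye_i$ or $\tfrac{1}{(Y(e_0-e_i))_0}Y(e_0-e_i)$, so that they impose no new constraint on the off-diagonal entries of $Y$ and the $\LS$-certificate for $y$ survives in $FRAC(G)$; while, on the other hand, they must still strictly shrink $FRAC(G)$ below $FRAC(H)$ and destroy whatever substructure $\LS$ would otherwise exploit to derive the violated facet directly from $FRAC(H)$. Concretely, I would fix the target facet of $STAB(H)$, set up the small linear feasibility system that a symmetric matrix certifying a point must satisfy once the new edges are declared slack, solve it for $y$ and $Y$ simultaneously, and then read off which extra edges are admissible --- i.e.\ check that they are precisely the edges forming the graph $G$ of \autoref{fig5}; the remaining checks (that those edges really are slack at the constructed columns, and that $y \in FRAC(G)$) are then routine.
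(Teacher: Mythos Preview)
Your plan is correct and follows essentially the same route as the paper: establish $P\subset P'$ from the subgraph relation, invoke \autoref{redundantineq} to get $\BZ(P')=STAB(H)$, reduce $\BZ(P)$ to $\LS(FRAC(G))$ via the size-one tier observation, and then separate with an explicit $y\in\LS(FRAC(G))\setminus STAB(H)$. The paper carries out exactly this, choosing the rank inequality $\sum_{i=1}^6 x_i\le 2=\alpha(H)$ as the violated facet and exhibiting $y=\tfrac{1}{3}(1,1,1,1,1,2)^\top$ together with an explicit $7\times 7$ certificate matrix; your final paragraph's ``solve the feasibility system and read off the admissible extra edges'' is more roundabout than necessary, since $G$ is given and one simply checks the columns of the displayed $Y$ against the constraints of $FRAC(G)$.
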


\begin{figure}[htb]
 \begin{center}

\begin{tikzpicture}[scale =1 ,>=stealth',shorten >=1pt,auto,node distance=1cm,
  thick,main node/.style={circle, draw,font=\scriptsize\sffamily}]

  \node[main node] at (2,2) (1) {1};
  \node[main node] at (0,3) (2) {2};
  \node[main node] at (2,4) (3) {3};
  \node[main node] at (4,3) (4) {4};
  \node[main node] at (3,1) (5) {5};
  \node[main node] at (1,1) (6) {6};

 \draw (1) -- (2) ;
 \draw (3) -- (2) ;
 \draw (3) -- (4) ;
 \draw (4) -- (5) ;
 \draw (5) -- (6) ;
 \draw (1) -- (3) ;
 \draw (6) -- (2) ;
 \draw (1) -- (4) ;
 \draw (1) -- (5) ;
\end{tikzpicture}
 \end{center}
\caption{Illustrating when $\BZ$ does not preserve containment.}\label{fig5}
\end{figure}
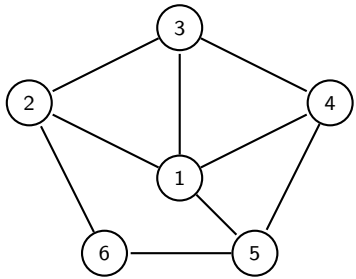

\begin{proof}
Let $G'$ be the graph in~\autoref{fig3}. Since $P = FRAC(G)$ and $P' = FRAC(G')$ and that $G'$ is a proper subgraph of $G$, it is easy to see that $P \subset P'$. We also showed in the proof of~\autoref{redundantineq} that $\BZ$ applied to the system $P'$ yields $P'_I$.

Next, if we apply $\BZ$ to $P$, then every tier has size $1$, and $\BZ(P) = \LS(P)$. Observe that the inequality $\sum_{i=1}^6 x_i \leq 2$ is valid for $P'_I = STAB(G')$. On the other hand, $y := \frac{1}{3}(1,1,1,1,1,2)^{\top}$ is in $\LS(P)$, certified by the following matrix in the lifted space:
\[
Y := \frac{1}{3}
\begin{pmatrix}
3 & 1 &1 &1&1&1&2 \\
1& 1&0 &0 &0 & 0& 1\\
1& 0& 1&0 & 0&1 &0 \\
1& 0& 0& 1&0 & 0& 1\\
1& 0&0 & 0&1 &0  &1 \\
1& 0& 1& 0& 0&1 & 0\\
2&1 & 0& 1 & 1 &0 &2
\end{pmatrix}.
\]
Since $\sum_{i=1}^6 y_i = \frac{7}{3} > 2$, we see that $\BZ(P) \not\subseteq \BZ(P')$.
\end{proof}

Finally, we provide the proof to \autoref{BZrankofFRACG}.

\begin{proof}[Proof of \autoref{BZrankofFRACG}]
Let $P := FRAC(K_n)$. We first prove the lower bound, by showing that all tiers generated by $\BZ'^k$ of size greater than $k+1$ are $P$-useless. This, combined with \autoref{SABZ'}, implies that $\BZ'^k(P) \supseteq \SA'^{2k+2}(\O_k(P))$.

Since the set of $k$-small obstructions of $FRAC(K_n)$ is exactly $E$ for every $k \geq 1$, we see that $\W_k = \set{W \subseteq [n] : |W| \leq k+1}$ and  $\T_k = \set{S \subseteq [n] : |S| \leq k(k+1)}$. Now if $S$ is any tier of size at least $k+2$, we see that $(S \sm T)|_1 \cap T|_0 \cap P = \es$ for all $T \subseteq S$ such that $|T| \leq k$. This is because in such cases $|S \sm T| \geq 2$, and there are no points in $P$ which contain at least two ones. Thus, the only variables $\a$ associated with $S$ such that $\a \cap P \neq \es$ take the form $(S \sm (T \cup U))|_1 \cap T|_0 \cap U|_{< |U| -(k-|T|)}$. However, in this case we know that $S \sm (T \cup U)$ has size zero or one, and thus $\a \cap P$ is equal to either $\F \cap P$ or $i|_1 \cap P$ for some $i \in [n]$. Therefore, all variables associated with $S$ are $P$-useless, and so the tier $S$ is $P$-useless.

Also, observe that $P = \O_k(P)$ for any $k \geq 1$, and $P$ is known to have $\SA$-rank $n-2$. In fact, the matrix that certifies $\frac{1}{n-1} \bar{e} \in \SA^{n-3}(P)$ also belongs to $\tilde{\SA}'^{n-3}(P)$. Hence, the $\SA'$-rank of $P$ is $n-2$ as well. Thus, it follows that the $\BZ'$-rank of $P$ is at least $\left\lceil \frac{n}{2}\right\rceil -2$. 
Moreover, since $\BZ'$ dominates $\BZ''$, it follows from \autoref{BZ''upper} that $FRAC(G)$ has $\BZ'$-rank at most $\left\lceil \frac{n+1}{2}\right\rceil$.

Finally, we turn to the $\BZ$-rank of $FRAC(G)$. Again, $\O_k = E$ for all $k \geq 1$. Therefore, in this case the conditions ($\BZ3$) and ($\BZ'3$) are equivalent. Since each vertex is incident with at least two edges, $\BZ$ does generate all the singleton sets as walls. Thus, the $\BZ$- and $\BZ'$-rank of $FRAC(G)$ must coincide.
\end{proof}

\end{document}